\def\bint{{\ifinner\rlap{\bf\kern.30em--}
\int\else\rlap{\bf\kern.35em--}\int\fi}\ignorespaces}
\def\sbint{{\ifinner\rlap{\bf\kern.32em--}
\hspace{0.078cm}\int\else\rlap{\bf\kern.45em--}\int\fi}\ignorespaces}
\def\red{\color{red}}
\def\rr{{\mathbb R}}
\def\rn{{\mathbb{R}^n}}
\def\nn{{\mathbb N}}
\def\zz{{\mathbb Z}}
\def\fz{\infty }
\def\lf{\left}
\def\r{\right}
\def\ls{\lesssim}
\def\noz{\nonumber}
\def\wz{\widetilde}
\DeclareMathOperator{\supp}{supp}
\def\XXint#1#2#3{{\setbox0=\hbox{$#1{#2#3}{\int}$ }
\vcenter{\hbox{$#2#3$ }}\kern-.6\wd0}}
\def\f{\frac}
\newtheorem{theorem}{Theorem}[section]
\newtheorem{lemma}[theorem]{Lemma}
\newtheorem{assumption}[theorem]{Assumption}
\theoremstyle{definition}
\newtheorem{remark}[theorem]{Remark}
\newtheorem{definition}[theorem]{Definition}
\renewcommand{\appendix}{\par
\setcounter{section}{0}%
\setcounter{subsection}{0}%
\setcounter{subsubsection}{0}%
\gdef\thesection{\@Alph\c@section}%
\gdef\thesubsection{\@Alph\c@section.\@arabic\c@subsection}%
\gdef\theHsection{\@Alph\c@section.}%
\gdef\theHsubsection{\@Alph\c@section.\@arabic\c@subsection}%
\csname appendixmore\endcsname
}
\numberwithin{equation}{section}
\begin{document}

\arraycolsep=1pt

\title{\bf\Large
Boundedness of Fractional Integrals on
Hardy Spaces Associated with Ball
Quasi-Banach Function Spaces
\footnotetext{\hspace{-0.35cm} 2020 {\it
Mathematics Subject Classification}. Primary 42B20;
Secondary 47A30, 42B30, 46E35, 42B25, 42B35.
\endgraf {\it Key words and phrases.}
fractional integral, Hardy-type space,
ball quasi-Banach function space.
\endgraf This project is partially supported by
the National Natural Science Foundation of China (Grant Nos.\
11971058 and 12071197) and the National
Key Research and Development Program of China
(Grant No.\ 2020YFA0712900).}}
\date{}
\author{Yiqun Chen, Hongchao Jia and Dachun Yang\footnote{Corresponding author,
E-mail: \texttt{dcyang@bnu.edu.cn}/{\red June 1, 2022}/Final version.}}
\maketitle

\vspace{-0.8cm}

\begin{center}
\begin{minipage}{13cm}
{\small {\bf Abstract}\quad
Let $X$ be a ball quasi-Banach function space on ${\mathbb R}^n$ and
$H_X({\mathbb R}^n)$ the Hardy space associated with $X$, and let $\alpha\in(0,n)$ and $\beta\in(1,\infty)$.
In this article, assuming that
the (powered) Hardy--Littlewood maximal operator satisfies the Fefferman--Stein
vector-valued maximal inequality on $X$ and is bounded on the associate space of $X$,
the authors prove that the fractional
integral $I_{\alpha}$
can be extended to a bounded linear operator
from $H_X({\mathbb R}^n)$ to $H_{X^{\beta}}({\mathbb R}^n)$
if and only if there exists a positive constant $C$ such that,
for any ball $B\subset \mathbb{R}^n$,
$|B|^{\frac{\alpha}{n}}\leq C
\|\mathbf{1}_B\|_X^{\frac{\beta-1}{\beta}}$,
where $X^{\beta}$ denotes the $\beta$-convexification of $X$.
Moreover, under some different reasonable assumptions
on both $X$ and another ball quasi-Banach function space $Y$,
the authors also consider the mapping property of $I_{\alpha}$
from $H_X({\mathbb R}^n)$ to $H_Y({\mathbb R}^n)$
via using the extrapolation theorem.
All these results have a wide range of applications.
Particularly, when these are
applied, respectively, to
Morrey spaces, mixed-norm Lebesgue spaces,
local generalized Herz spaces, and mixed-norm Herz spaces, all
these results are new. The proofs of these theorems strongly
depend on atomic and molecular characterizations of $H_X({\mathbb R}^n)$.
}
\end{minipage}
\end{center}

\vspace{0.2cm}



\section{Introduction\label{s-intro}}

It is well known that the real-variable theory of Hardy-type spaces
on $\rn$ including the boundedness of fractional integrals
plays an
important role in both harmonic analysis and partial differential equations
(see, for instance, \cite{CW,EMS1970}). Recall
that the classical Hardy space $H^p(\rn)$ with $p\in(0,1]$
was originally introduced by Stein and Weiss
\cite{sw1960} and seminally developed by Fefferman and Stein \cite{FS72}.
Fefferman and Stein \cite{FS72}
established the famous dual theorem, namely, the dual space of
$H^1(\rn)$ is the space $\mathop{\mathrm{BMO}\,}(\rn)$
of functions with bounded mean oscillation,
which was first introduced by John and Nirenberg \cite{JN}.

Let $\alpha\in(0,n)$.
Recall that Hardy and Littlewood \cite{hl1928} introduced the fractional integral
$I_{\alpha}$ which is defined by setting,
for any $f\in L^q(\rn)$ with $q\in [1,\frac{n}{\alpha})$,
and for almost every $x\in \rn$,
\begin{align}\label{cla-I}
I_{\alpha}(f)(x)
:=\int_{\rn}\frac{f(y)}{|x-y|^{n-\alpha}}\,dy.
\end{align}
Hardy and Littlewood \cite{hl1928}
and Sobolev \cite{s1938} established the boundedness of $I_{\alpha}$
from $L^p(\rn)$ to $L^{q}(\rn)$ with $p\in(1,\frac{n}{\alpha})$ and
$\frac{1}{q}:=\frac{1}{p}-\frac{\alpha}{n}$,
which is nowadays called the
Hardy--Littlewood--Sobolev theorem
and plays an important role in both potential theory and partial differential equations;
see, for instance, \cite{clt2019, mv1995, Po1997, R1996}.
From then on, the boundedness of fractional integrals on Hardy-type
spaces has attracted more and more attention.
For instance, Nakai \cite{N17} studied
the boundedness of fractional integrals on generalized Hardy
spaces over spaces of homogeneous type in the sense of Coifman and Weiss,
which are preduals of Campanato spaces with variable growth condition.
Cao et al. \cite{ccy2013} studied fractional integrals on
weighted Orlicz--Hardy spaces.
Recently, Ho \cite{ho2021} studied the Erd\'elyi--Kober fractional integrals
on ball Banach function spaces.
Meanwhile, Huy and Ky \cite{hk2021} obtained the boundedness of
fractional integrals on Musielak--Orlicz Hardy spaces.
We refer the reader to \cite{ans2021, cs2022, chy2021, dll2003,ho2020, N10, Tan2020} for more
studies on the boundedness of fractional integrals on Hardy-type spaces.

On the other hand, Sawano et al. \cite{SHYY}
introduced the ball quasi-Banach function space $X$ and
the related Hardy space $H_X(\rn)$.
Sawano et al. \cite{SHYY} established various real-variable characterizations of
$H_X(\rn)$, respectively,
in terms of various maximal functions,
atoms, molecules, and Lusin-area functions.
Later, Wang et al. \cite{wyy} obtained the boundedness of both Calder\'on--Zygmund and
pseudo-differential operators on both $H_X(\rn)$ and $h_X(\rn)$, the local version of $H_X(\rn)$; Chang et al. \cite{CWYZ2020} established
various Littlewood--Paley function characterizations of $H_X(\rn)$.
Very recently, Yan et al. \cite{yyy20} obtained various intrinsic square
function characterizations of $H_X(\rn)$; Zhang et al. \cite{zhyy2022} introduced
the ball Campanato-type function space $\mathcal{L}_{X,q,s,d}(\rn)$ which proves the
dual space of $H_X(\rn)$, and then established
its Carleson measure characterization.
We refer the reader to \cite{syy,syy2,tyyz,wyy,wyyz,yhyy1,yhyy,zyyw}
for more studies on Hardy-type spaces associated with $X$. However, the boundedness
of fractional integrals on $H_X(\rn)$ is still unknown
even when $X$ is some concrete function spaces, for instance,
the Morrey space, the mixed-norm Lebesgue space,
the local generalized Herz space, and the mixed-norm Herz space.

In this article, we extend the aforementioned Hardy--Littlewood--Sobolev
theorem to Hardy spaces $H_X(\rn)$.
To be precise, assuming that
the (powered) Hardy--Littlewood maximal operator satisfies some Fefferman--Stein
vector-valued maximal inequality on $X$ and is bounded on the associate space of $X$,
we prove that the fractional integral $I_{\alpha}$, with $\alpha\in(0,n)$,
can be extended to a bounded linear operator
from $H_X({\mathbb R}^n)$ to $H_{X^{\beta}}({\mathbb R}^n)$
for some $\beta\in(1,\infty)$
if and only if there exists a positive constant $C$ such that,
for any ball $B\subset \mathbb{R}^n$,
\begin{align}\label{main-align}
|B|^{\frac{\alpha}{n}}\leq C
\|\mathbf{1}_B\|_X^{\frac{\beta-1}{\beta}},
\end{align}
where $X^{\beta}$ denotes the $\beta$-convexification of $X$.
Moreover, under some different reasonable assumptions
on both $X$ and another ball quasi-Banach function space $Y$
recently introduced by Deng et al. \cite{dsl}, we also consider
the mapping property of $I_{\alpha}$
from $H_X({\mathbb R}^n)$ to $H_Y({\mathbb R}^n)$
via using the extrapolation theorem.
All these results have a wide range of applications
and, even when $X$ is the Morrey space, the mixed-norm Lebesgue space,
the local generalized Herz space, and the mixed Herz space, all
these results are new. The proofs of these theorems
strongly depend on atomic and molecular characterizations of $H_X({\mathbb R}^n)$.
To limit the length of this article, the boundedness of fractional integrals on the
Campanato-type function space $\mathcal{L}_{X,q,s,d}(\rn)$,
the dual space of $H_X(\rn)$, will be
studied in the forthcoming article
\cite{cjy2022}.

Indeed, the known proof of fractional integrals on concrete
spaces (see, for instance, \cite{GD2020}) is strongly based on the
various indexes of concrete spaces, and is inapplicable for the Hardy space $H_{X}(\rn)$ here due to the deficiency of the explicit expression of the (quasi-)norm of $X$.
We use two methods to overcome this essential difficulty.
One way is that we creatively introduce
the Hardy space $H_{X^{\beta}}(\rn)$, with some $\beta\in(1,\fz)$,
as the appropriate image space of $I_{\alpha}$ on $H_{X}(\rn)$.
Another way is that we use the extrapolation theorem
to find the appropriate image space and then obtain
the mapping property of $I_{\alpha}$ on $H_X({\mathbb R}^n)$.

To be precise, the remainder of this article is organized as follows.

In Section \ref{s1}, we recall some basic concepts
on the quasi-Banach function space $X$ and the Hardy space $H_X(\rn)$.
At the end of this section, we present two additional assumptions about the
boundedness of the (powered) Hardy--Littlewood maximal operator on $X$
(see Assumptions \ref{assump1} and \ref{assump2} below).

Section \ref{s4} is devoted to obtaining
the boundedness of $I_\alpha$ on $H_X(\rn)$.
To be precise, in Theorem \ref{thm-Ia-02} below,
under Assumptions \ref{assump1} and \ref{assump2} below,
we prove that $I_\alpha$ can be extended to a bounded
linear operator from $H_X(\rn)$ to $H_{X^\beta}(\rn)$, with some $\beta\in(1,\fz)$,
if and only if \eqref{main-align} holds true.
Moreover, motivated by \cite{dsl}, in Theorem \ref{thm-R} below,
we obtain the other mapping property of $I_{\alpha}$
on $H_X({\mathbb R}^n)$ via using the extrapolation theorem,
which can be particularly applied to Orlicz-slice Hardy spaces.

In Section \ref{Appli}, we apply all the above main results to four concrete examples
of ball quasi-Banach function spaces,
namely, the Morrey space $M_r^p(\rn)$,
the mixed-norm Lebesgue space $L^{\vec{p}}(\rn)$,
the local generalized Herz space
$\dot{\mathcal{K}}_{\omega,\mathbf{0}}^{p,q}(\mathbb{R}^{n})$,
and the mixed-norm Herz space $\dot{E}^{\vec{\alpha},\vec{p}}_{\vec{q}}(\rn)$,
respectively. Therefore, all the boundedness of the fractional integral $I_{\alpha}$
on Hardy-type spaces $HM_r^p(\rn)$, $H^{\vec{p}}(\rn)$, $H^p_{w}(\rn)$,
$H\dot{\mathcal{K}}_{\omega,\mathbf{0}}^{p,q}(\mathbb{R}^{n})$,
and $H\dot{E}^{\vec{\alpha},\vec{p}}_{\vec{q}}(\rn)$ are obtained
(see, respectively, Theorems \ref{Th-Morrey}, \ref{Th-Mixed}, \ref{lyq-X-Y},
\ref{Th-LGH1}, \ref{lyq-X-Y-Z}, \ref{Th-M-H}, and \ref{lyq-X} below).

Finally, we make some conventions on notation. Let
$\nn:=\{1,2,\ldots\}$, $\zz_+:=\nn\cup\{0\}$, and $\mathbb{Z}^n_+:=(\mathbb{Z}_+)^n$.
We always denote by $C$ a \emph{positive constant}
which is independent of the main parameters,
but it may vary from line to line.
The symbol $f\lesssim g$ means that $f\le Cg$.
If $f\lesssim g$ and $g\lesssim f$, we then write $f\sim g$.
If $f\le Cg$ and $g=h$ or $g\le h$,
we then write $f\ls g\sim h$
or $f\ls g\ls h$, \emph{rather than} $f\ls g=h$
or $f\ls g\le h$. We use $\mathbf{0}$ to
denote the \emph{origin} of $\rn$.
For any measurable subset $E$ of $\rn$, we denote by $\mathbf{1}_E$ its
characteristic function. Moreover, for any $x\in\rn$ and $r\in(0,\fz)$,
let $B(x,r):=\{y\in\rn:\ |y-x|<r\}$.
Furthermore, for any $\lambda\in(0,\infty)$
and any ball $B(x,r)\subset\rn$ with $x\in\rn$ and
$r\in(0,\fz)$, let $\lambda B(x,r):=B(x,\lambda r)$.
Finally, for any $q\in[1,\infty]$,
we denote by $q'$ its \emph{conjugate exponent},
namely, $\frac{1}{q}+\frac{1}{q'}=1$.

\section{Preliminaries\label{s1}}

In this section, we recall the definitions of ball quasi-Banach function spaces and their related
Hardy spaces $H_X(\rn)$.
In what follows, we use $\mathscr M(\rn)$ to denote the set of
all measurable functions on $\rn$.
For any $x\in\rn$ and $r\in(0,\infty)$, let $B(x,r):=\{y\in\rn:\ |x-y|<r\}$ and
\begin{equation}\label{Eqball}
\mathbb{B}(\rn):=\lf\{B(x,r):\ x\in\rn \text{ and } r\in(0,\infty)\r\}.
\end{equation}
The following concept of ball quasi-Banach function
spaces on $\rn$
is from \cite[Definition 2.2]{SHYY}.

\begin{definition}\label{Debqfs}
Let $X\subset\mathscr{M}(\rn)$ be a quasi-normed linear space
equipped with a quasi-norm $\|\cdot\|_X$ which makes sense for all
measurable functions on $\rn$.
Then $X$ is called a \emph{ball quasi-Banach
function space} on $\rn$ if it satisfies:
\begin{enumerate}
\item[$\mathrm{(i)}$] if $f\in\mathscr{M}(\rn)$, then $\|f\|_{X}=0$ implies
that $f=0$ almost everywhere;
\item[$\mathrm{(ii)}$] if $f, g\in\mathscr{M}(\rn)$, then $|g|\le |f|$ almost
everywhere implies that $\|g\|_X\le\|f\|_X$;
\item[$\mathrm{(iii)}$] if $\{f_m\}_{m\in\nn}\subset\mathscr{M}(\rn)$ and $f\in\mathscr{M}(\rn)$,
then $0\le f_m\uparrow f$ almost everywhere as $m\to\infty$
implies that $\|f_m\|_X\uparrow\|f\|_X$ as $m\to\infty$;
\item[$\mathrm{(iv)}$] $B\in\mathbb{B}(\rn)$ implies
that $\mathbf{1}_B\in X$,
where $\mathbb{B}(\rn)$ is as in \eqref{Eqball}.
\end{enumerate}

Moreover, a ball quasi-Banach function space $X$
is called a
\emph{ball Banach function space} if it satisfies:
\begin{enumerate}
\item[$\mathrm{(v)}$] for any $f,g\in X$,
\begin{equation*}
\|f+g\|_X\le \|f\|_X+\|g\|_X;
\end{equation*}
\item[$\mathrm{(vi)}$] for any ball $B\in \mathbb{B}(\rn)$,
there exists a positive constant $C_{(B)}$,
depending on $B$, such that, for any $f\in X$,
\begin{equation*}
\int_B|f(x)|\,dx\le C_{(B)}\|f\|_X.
\end{equation*}
\end{enumerate}
\end{definition}

\begin{remark}\label{rem-ball-B}
\begin{enumerate}
\item[$\mathrm{(i)}$] Let $X$ be a ball quasi-Banach
function space on $\rn$. By \cite[Remark 2.6(i)]{yhyy1},
we conclude that, for any $f\in\mathscr{M}(\rn)$, $\|f\|_{X}=0$ if and only if $f=0$
almost everywhere.

\item[$\mathrm{(ii)}$] As was mentioned in
\cite[Remark 2.6(ii)]{yhyy1}, we obtain an
equivalent formulation of Definition \ref{Debqfs}
via replacing any ball $B$ by any
bounded measurable set $E$ therein.

\item[$\mathrm{(iii)}$] We should point out that,
in Definition \ref{Debqfs}, if we
replace any ball $B$ by any measurable set $E$ with
finite measure, we obtain the
definition of (quasi-)Banach function spaces which were originally
introduced in \cite[Definitions 1.1 and 1.3]{BS88}. Thus,
a (quasi-)Banach function space
is also a ball (quasi-)Banach function
space and the converse is not necessary to be true.
\item[$\mathrm{(iv)}$] By \cite[Theorem 2]{dfmn2021},
we conclude that both (ii) and (iii) of
Definition \ref{Debqfs} imply that any ball quasi-Banach
function space is complete and the converse is not necessary to be true.
\end{enumerate}
\end{remark}

The associate space $X'$ of any given ball
Banach function space $X$ is defined as follows
(see \cite[Chapter 1, Section 2]{BS88} or \cite[p.\,9]{SHYY}).

\begin{definition}\label{de-X'}
For any given ball quasi-Banach function space $X$, its \emph{associate space}
 (also called the
\emph{K\"othe dual space}) $X'$ is defined by setting
\begin{equation*}
X':=\lf\{f\in\mathscr M(\rn):\ \|f\|_{X'}<\infty\r\},
\end{equation*}
where, for any $f\in X'$,
$$\|f\|_{X'}:=\sup\lf\{\lf\|fg\r\|_{L^1(\rn)}:\ g\in X,\ \|g\|_X=1\r\},$$
and $\|\cdot\|_{X'}$ is called the \emph{associate norm} of $\|\cdot\|_X$.
\end{definition}

\begin{remark}\label{bbf}
From \cite[Proposition 2.3]{SHYY}, we deduce that, if $X$ is a ball
Banach function space, then its associate space $X'$ is also a ball
Banach function space.
\end{remark}

We also recall the concepts of both the convexity and
the concavity of ball quasi-Banach function spaces,
which are a part of \cite[Definition 2.6]{SHYY}.

\begin{definition}\label{Debf}
Let $X$ be a ball quasi-Banach function space and $p\in(0,\infty)$.
\begin{enumerate}
\item[(i)] The \emph{$p$-convexification} $X^p$ of $X$
is defined by setting
$$X^p:=\lf\{f\in\mathscr M(\rn):\ |f|^p\in X\r\}$$
equipped with the \emph{quasi-norm} $\|f\|_{X^p}:=\||f|^p\|_X^{1/p}$ for any $f\in X^p$.

\item[(ii)] The space $X$ is said to be
\emph{concave} if there exists a positive constant
$C$ such that, for any $\{f_k\}_{k\in{\mathbb N}}\subset \mathscr M(\rn)$,
$$\sum_{k=1}^{{\infty}}\|f_k\|_{X}
\le C\left\|\sum_{k=1}^{{\infty}}|f_k|\right\|_{X}.$$
In particular, when $C=1$, $X$ is said to be
\emph{strictly concave}.
\end{enumerate}
\end{definition}

\begin{remark}
It is easy to show that, for any ball quasi-Banach function space $X$ and $p\in(0,\infty)$, the
$p$-convexification $X^p$ of $X$ is also a ball quasi-Banach function space.
\end{remark}

We now present the definition of
$H_X(\rn)$ from \cite[Definition 2.22]{SHYY}.
In what follows, denote by $\mathcal{S}(\rn)$ the space of all
Schwartz functions equipped with the topology
determined by a well-known countable family of norms, and by
$\mathcal{S}'(\rn)$ its topological dual space equipped with the
weak-$*$ topology. For any $N\in\mathbb{N}$ and
$\phi\in\mathcal{S}(\rn)$, let
\begin{equation*}
p_N(\phi):=\sum_{\alpha\in\mathbb{Z}
^{n}_{+},|\alpha|\leq N}
\sup_{x\in\rn}\left\{(1+|x|)^{N+n}|
\partial^{\alpha}\phi(x)|\right\}
\end{equation*}
and
\begin{equation*}
\mathcal{F}_N(\rn):=\left\{\phi\in\mathcal{S}
(\rn):\ p_N(\phi)\in[0,1]\right\},
\end{equation*}
where, for any $\alpha:=(\alpha_1,\ldots,\alpha_n)\in\mathbb{Z}^n_+$,
$|\alpha|:=\alpha_1+\cdots+\alpha_n$ and
$\partial^\alpha:=(\frac{\partial}{\partial
x_1})^{\alpha_1}\cdots(\frac{\partial}{\partial
x_n})^{\alpha_n}$. Moreover, for any $r\in\rr$, we denote by $\lfloor r\rfloor$ (resp., $\lceil r\rceil$) the
\emph{maximal} (resp., \emph{minimal})
\emph{integer not greater} (resp., \emph{less}) \emph{than} $r$.
Also, recall that the \emph{Hardy-Littlewood maximal operator}
$\mathcal{M}$ is defined by setting, for any $f\in
L_{\mathrm{loc}}^1(\rn)$ (the set of all locally integrable functions) and
$x\in\rn$,
\begin{align*}
\mathcal{M}(f)(x):=\sup_{B\ni x}\frac{1}{|B|}\int_{B}
|f(y)|dy,
\end{align*}
where the supremum is taken over all balls $B\in\mathbb{B}(\rn)$
containing $x$.

\begin{definition}\label{2d1}
Let $X$ be a ball quasi-Banach
function space and $N\in\nn$ be sufficiently
large. Then the \emph{Hardy
space} $H_X(\rn)$ is defined to be
the set of all the $f\in\mathcal{S}'(\rn)$
such that
$$
\left\|f\right\|_{H_X(\rn)}
:=\left\|\mathcal{M}_N(f)\right\|_{X}<\infty,
$$
where the \emph{non-tangential
grand maximal function}
$\mathcal{M}_{N}(f)$
of $f\in\mathcal{S}'(\rn)$
is defined by setting, for any $x\in\rn$,
\begin{equation}\label{sec6e1}
\mathcal{M}_{N}(f)(x):=
\sup\left\{|f*\phi_{t}(y)|:\
\phi\in\mathcal{F}_{N}(\rn),\
t\in(0,\infty),\ |x-y|<t\right\}.
\end{equation}
\end{definition}

\begin{remark}
Let all the symbols be the same as in Definition \ref{2d1}.
Assume that there exists an $r\in(0,\fz)$ such that
the Hardy--Littlewood maximal operator $\mathcal{M}$ is bounded on $X^{\frac{1}{r}}$.
If $N\in[\lfloor \frac{n}{r}+1\rfloor,\fz)\cap\mathbb{N}$, then,
by \cite[Theorem 3.1]{SHYY}, we find that the Hardy space
$H_X(\rn)$ is independent of
the choice of $N$.
\end{remark}

Recall that the \emph{Lebesgue space} $L^q(\rn)$ with
$q\in(0,\infty]$
is defined to be the set of all the measurable functions $f$ on $\rn$
such that
$$\|f\|_{L^q(\rn)}:=
\begin{cases}
\displaystyle
\lf[\int_{\rn}|f(x)|^q\, dx\r]^{\frac{1}{q}}
&\text{if}\quad q\in(0,\fz),\\
\displaystyle
\mathop{\mathrm{ess\,sup}}_{x\in\rn}\,|f(x)|
&\text{if}\quad q=\fz
\end{cases}$$
is finite.

In this article, we need the following two mild assumptions
about the
boundedness of the (powered) Hardy--Littlewood maximal operator
on ball quasi-Banach function spaces.

\begin{assumption}\label{assump1}
Let $X$ be a ball quasi-Banach function space. Assume that there
exists a $p_-\in(0,\infty)$ such that,
for any given $p\in(0,p_-)$ and $u\in(1,\infty)$, there exists a
positive constant $C$ such that,
for any $\{f_j\}_{j=1}^\infty\subset\mathscr M(\rn)$,
\begin{equation}\label{ass-02}
\lf\|\lf\{\sum_{j\in\nn}\lf[\mathcal{M}(f_j)\r]^u\r\}^{\frac{1}{u}}
\r\|_{X^{1/p}}
\le C\lf\|\lf(\sum_{j\in\nn}|f_j|^u\r)^{\frac{1}{u}}\r\|_{X^{1/p}}.
\end{equation}
\end{assumption}

\begin{assumption}\label{assump2}
Let $X$ be a ball quasi-Banach function space.
Assume that there exists an $r_0\in(0,\infty)$ and a
$p_0\in(r_0,\infty)$
such that $X^{1/r_0}$ is a ball Banach function space and there exists a positive constant $C$ such that,
for any $f\in(X^{1/r_0})'$,
\begin{align}\label{ass-01}
\lf\|\mathcal{M}^{((p_0/r_0)')}(f)\r\|_{(X^{1/r_0})'}\le
C\lf\|f\r\|_{(X^{1/r_0})'}.
\end{align}
\end{assumption}

\begin{remark}\label{main-remark}
\begin{enumerate}
\item[(i)]Let $X$ be a ball quasi-Banach function
space satisfying Assumption \ref{assump1}
with $p_-\in(0,\infty)$. Let $\beta\in(1,\infty)$. It is easy to prove that \eqref{ass-02} still
holds true
with both $X$ and $p_-$ replaced, respectively, by $X^{\beta}$ and $\beta p_-$.
\item[(ii)]Let $X$ be a ball quasi-Banach function
space satisfying Assumption \ref{assump2}
with both $r_0\in(0,\infty)$ and $p_0\in(r_0,\infty)$.
Let $\beta\in(1,\fz)$. It is easy to prove that \eqref{ass-01} still
holds true
with $X$, $r_0$, and $p_0$ replaced, respectively, by $X^{\beta}$,
$\beta r_0$, and $\beta p_0$.
\item[(iii)]Let both $X$ and $p_-$ satisfy Assumption \ref{assump1}, and
$d\in(0,\fz)$.
Notice that, for any ball $B\in\mathbb{B}(\rn)$ and any
$\beta\in[1,\fz)$,
$\mathbf{1}_{\beta B}\leq (\beta+1)^{\frac{dn}{r}}
[\mathcal{M}(\mathbf{1}_B)]^{\frac{d}{r}}$
with $r\in(0,\min\{d,p_-\})$. By this and Assumption \ref{assump1},
we conclude that, for any $r\in(0,\min\{d,p_-\})$, any
$\beta\in[1,\fz)$,
any sequence $\{B_j\}_{j\in\nn}\subset \mathbb{B}(\rn)$,
and any $\{\lambda_j\}_{j\in\nn}\subset [0,\fz)$,
\begin{align*}
\lf\|\lf(\sum_{j\in\nn}\lambda_j^d\mathbf{1}_{\beta
B_j}\r)^{\frac{1}{d}}\r\|_{X}
&\leq (\beta+1)^{\frac{n}{r}}\lf\|\lf\{\sum_{j\in\nn}
\lf[\mathcal{M}(\lambda_j^{r}\mathbf{1}_{B_j})
\r]^{\frac{d}{r}}\r\}^{\frac{1}{d}}\r\|_{X}\\
&\leq (2\beta)^{\frac{n}{r}}\lf\|\lf\{\sum_{j\in\nn}
\lf[\mathcal{M}(\lambda_j^{r}\mathbf{1}_{B_j})\r]^{\frac{s}{r}}\r\}
^{\frac{r}{d}}
\r\|_{X^{1/r}}^{\frac{1}{r}}\\
&\leq (2\beta)^{\frac{n}{r}}\lf\|\lf(\sum_{j\in\nn}
\lambda_j^d\mathbf{1}_{B_j}\r)^{\frac{r}{d}}\r\|_{X^{1/r}}^{\frac{1}{r}}
=(2\beta)^{\frac{n}{r}}\lf\|\lf(\sum_{j\in\nn}
\lambda_j^d\mathbf{1}_{B_j}\r)^{\frac{1}{d}}\r\|_{X}.
\end{align*}
Particularly, for any $r\in(0,p_-)$ and any ball $B\in
\mathbb{B}(\rn)$, we have
\begin{align}\label{key-c}
\lf\|\mathbf{1}_{\beta B}\r\|_{X}
\leq (2\beta)^{\frac{n}{r}}\|\mathbf{1}_{B}\|_{X}.
\end{align}
\end{enumerate}
\end{remark}

At the end of this section,
we recall the following definitions of both the
$(X,q,s)$-atom and the finite atomic Hardy space
$H_{\mathrm{fin}}^{X,q,s,d}(\rn)$ which are, respectively, from
\cite[Definition 3.5]{SHYY}
and \cite[Definition 1.9]{yyy20}.

\begin{definition}\label{atom}
Let $X$ be a ball quasi-Banach function space, $q\in(1,\infty]$, and
$s\in\zz_+$.
Then a measurable function $a$ on $\rn$ is called an
$(X,q,s)$-\emph{atom}
if there exists a ball $B\in\mathbb{B}(\rn)$ such that
\begin{enumerate}
\item[(i)] $\supp\,(a):=\{x\in\rn:\ a(x)\neq0\}\subset B$;
\item[(ii)]
    $\|a\|_{L^q(\rn)}\le\frac{|B|^{\frac{1}{q}}}{\|\mathbf{1}_B\|_X}$;
\item[(iii)] $\int_{\rn} a(x)x^\gamma\,dx=0$ for any
$\gamma:=(\gamma_1,\ldots,\gamma_n)\in\zz_+^n$ with
$|\gamma|:=\gamma_1+\cdots+\gamma_n\le s$,
here and thereafter, for any $x:=(x_1,\ldots,x_n)\in\rn$,
$x^\gamma:=x_1^{\gamma_1}\cdots x_n^{\gamma_n}$.
\end{enumerate}
\end{definition}

\begin{definition}\label{finatom}
Let both $X$ and $p_-$ satisfy Assumption \ref{assump1}. Assume that
$r_0\in(0,\min\{1,p_-\})$ and $p_0\in(r_0,\infty)$
satisfy Assumption \ref{assump2}.
Let $s\in[\lfloor n(\frac{1}{\min\{1,p_-\}}-1)\rfloor,\infty)\cap\zz_+$,
$d\in(0,r_0]$, and $q\in(\max\{1,p_0\},\infty]$.
The \emph{finite atomic Hardy space}
$H_{\mathrm{fin}}^{X,q,s,d}({{\rr}^n})$,
associated with $X$, is defined to be the set of all finite
linear combinations of $(X,q,s)$-atoms. The quasi-norm
$\|\cdot\|_{H_{\mathrm{fin}}^{X,q,s,d}({{\rr}^n})}$ in
$H_{\mathrm{fin}}^{X,q,s,d}({{\rr}^n})$
is defined by setting, for any $f\in
H_{\mathrm{fin}}^{X,q,s,d}({{\rr}^n})$,
\begin{align*}
\|f\|_{H_{\mathrm{fin}}^{X,q,s,d}({{\rr}^n})}&:=\inf\left\{\left\|
\left[\sum_{j=1}^{N}
\left(\frac{{\lambda}_j}{\|{\mathbf{1}}_{B_j}\|_X}\right)^d
{\mathbf{1}}_{B_j}\right]^{\frac1d}\right\|_{X}
\right\},
\end{align*}
where the infimum is taken over all finite linear combinations of $(X,q,s)$-atoms of
$f$, namely,
$f=\sum_{j=1}^{N}\lambda_ja_j$
with $N\in\nn$,
$\{\lambda_j\}_{j=1}^{N}\subset[0,\infty)$, and $\{a_j\}_{j=1}^{N}$
being
$(X,q,s)$-atoms supported, respectively,
in the balls $\{B_j\}_{j=1}^{N}\subset\mathbb{B}(\rn)$.
\end{definition}

\section{Fractional Integrals on $H_X(\rn)$\label{s4}}

In this section, we study the mapping property of $I_\alpha$ on $H_X(\rn)$.
To be precise, we first give a necessary and
sufficient condition for the boundedness of the
functional integral $I_\alpha$ both from $X$ to $X^\beta$ and
from $H_X(\rn)$ to $H_{X^\beta}(\rn)$
for some $\beta\in(1,\fz)$. Then, using the extrapolation theorem, we obtain the other boundedness
property of $I_\alpha$ from $H_X(\rn)$ to $H_Y(\rn)$.

To give a necessary and sufficient condition for the
boundedness of $I_\alpha$ from $X$ to $X^\beta$,
we need the following assumption
about the boundedness of the Hardy--Littlewood maximal
operator on ball quasi-Banach function spaces.

\begin{assumption}\label{max-assump}
Let $X$ be a ball quasi-Banach space. Assume that there exists
a positive constant $C$ such that, for any $f\in X$,
\begin{align*}
\|\mathcal{M}(f)\|_X\leq C\|f\|_X.
\end{align*}
\end{assumption}

\begin{remark}\label{max-re}
Let both $X$ and $p_-\in(1,\infty)$ satisfy Assumption
\ref{assump1}. Then it is easy to show that $X$ satisfies Assumption \ref{max-assump}.
\end{remark}

\begin{theorem}\label{Th-a-b}
Let $X$ be a ball Banach space satisfying Assumption
\ref{max-assump}, $\beta\in(1,\infty)$,
$\alpha\in(0,n)$, and $I_{\alpha}$ be the same as in
\eqref{cla-I}.
Then $I_\alpha$ is bounded from $X$ to $X^\beta$, namely,
there exists a positive constant $C$ such that, for any $f\in X$,
\begin{align*}
\lf\|I_{\alpha}(f)\r\|_{X^{\beta}}\leq C\|f\|_{X}
\end{align*}
if and only if there exists a positive constant $\wz C$ such that,
for any ball $B\in\mathbb{B}(\rn)$,
\begin{align}\label{main-assump1}
|B|^{\frac{\alpha}{n}}\leq \wz C
\|\mathbf{1}_B\|_X^{\frac{\beta-1}{\beta}}.
\end{align}
\end{theorem}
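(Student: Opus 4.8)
The plan is to prove both implications. The necessity is a short test-function argument, while the sufficiency rests on a Hedberg-type pointwise estimate
$$|I_{\alpha}(f)(x)|\lesssim[\mathcal M(f)(x)]^{1/\beta}\|f\|_X^{(\beta-1)/\beta}\qquad\text{for a.e. }x\in\rn,$$
from which the conclusion follows upon taking $\beta$-th powers and invoking Assumption \ref{max-assump}.

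For the necessity, I would suppose $I_{\alpha}$ is bounded from $X$ to $X^{\beta}$ and test it on $f=\mathbf 1_B$ for an arbitrary ball $B=B(x_0,r)$. For $x,y\in B$ one has $|x-y|<2r$, so $I_{\alpha}(\mathbf 1_B)(x)\ge(2r)^{\alpha-n}|B|\sim r^{\alpha}\sim|B|^{\alpha/n}$; hence $I_{\alpha}(\mathbf 1_B)\gtrsim|B|^{\alpha/n}\mathbf 1_B$ pointwise. Applying $\|\cdot\|_{X^{\beta}}$, using its monotonicity (Definition \ref{Debqfs}(ii)) together with the identity $\|\mathbf 1_B\|_{X^{\beta}}=\|\mathbf 1_B\|_X^{1/\beta}$, and then the assumed boundedness, I would obtain $|B|^{\alpha/n}\|\mathbf 1_B\|_X^{1/\beta}\lesssim\|\mathbf 1_B\|_X$, which is exactly \eqref{main-assump1}.

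For the sufficiency I would first record, as a consequence of Assumption \ref{max-assump}, the averaging estimate $\|\mathbf 1_B\|_{X'}\lesssim|B|/\|\mathbf 1_B\|_X$: for any $g$ with $\|g\|_X\le1$ and any $x\in B$ one has $\frac{1}{|B|}\int_B|g|\le\mathcal M(g)(x)$, so $\frac{1}{|B|}(\int_B|g|)\mathbf 1_B\le\mathcal M(g)$, whence $\frac{1}{|B|}\int_B|g|\,\|\mathbf 1_B\|_X\le\|\mathcal M(g)\|_X\lesssim1$, and taking the supremum over such $g$ gives the claim via Definition \ref{de-X'}. Then, for a parameter $\delta>0$, I would split the integral defining $I_{\alpha}(f)(x)$ into the near part $\{|x-y|<\delta\}$ and the far part $\{|x-y|\ge\delta\}$. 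A standard dyadic annular decomposition bounds the near part by $\lesssim\delta^{\alpha}\mathcal M(f)(x)$. For the far part, decomposing into annuli $\{2^k\delta\le|x-y|<2^{k+1}\delta\}$, estimating $|x-y|^{\alpha-n}\le(2^k\delta)^{\alpha-n}$ on each and applying Hölder together with the averaging estimate, each annular term is controlled by $\|f\|_X(2^k\delta)^{\alpha}/\|\mathbf 1_{B(x,2^{k+1}\delta)}\|_X$. Here hypothesis \eqref{main-assump1} enters twice: its direct form turns $(2^k\delta)^{\alpha}$ into $\|\mathbf 1_{B(x,2^{k+1}\delta)}\|_X^{(\beta-1)/\beta}$, while its rearranged form $\|\mathbf 1_{B(x,R)}\|_X\gtrsim R^{\alpha\beta/(\beta-1)}$ produces the geometric decay $\|\mathbf 1_{B(x,2^{k+1}\delta)}\|_X^{-1/\beta}\lesssim(2^{k+1}\delta)^{-\alpha/(\beta-1)}$; summing over $k\ge0$ then yields a far-part bound $\lesssim\|f\|_X\delta^{-\alpha/(\beta-1)}$.

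With the two bounds $\delta^{\alpha}\mathcal M(f)(x)$ and $\|f\|_X\delta^{-\alpha/(\beta-1)}$ in hand, I would optimize by choosing $\delta=(\|f\|_X/\mathcal M(f)(x))^{(\beta-1)/(\alpha\beta)}$, which is legitimate for a.e. $x$ since $\mathcal M(f)\in X$ forces $0<\mathcal M(f)(x)<\infty$ a.e. when $f\not\equiv0$; this balances the two terms and gives exactly the Hedberg inequality displayed above. Raising it to the power $\beta$, applying $\|\cdot\|_X$, pulling out $\|f\|_X^{\beta-1}$, and using Assumption \ref{max-assump} in the form $\|\mathcal M(f)\|_X\lesssim\|f\|_X$ then yields $\||I_{\alpha}(f)|^{\beta}\|_X\lesssim\|f\|_X^{\beta}$, that is, $\|I_{\alpha}(f)\|_{X^{\beta}}\lesssim\|f\|_X$ by the definition of the $\beta$-convexification. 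The main obstacle is the far-part estimate: unlike the classical Lebesgue setting, no explicit norm is available, so the convergence of the tail sum cannot be read off from an exponent but must instead be extracted from the lower growth $\|\mathbf 1_{B(x,R)}\|_X\gtrsim R^{\alpha\beta/(\beta-1)}$ hidden in \eqref{main-assump1}; making the bookkeeping of scales match the direct and rearranged uses of \eqref{main-assump1}, and thereby be consistent with the near-part exponent upon optimization, is the delicate point.
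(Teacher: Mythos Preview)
Your proposal is correct and follows essentially the same Hedberg-type strategy as the paper: split $I_\alpha(f)(x)$ into a near part bounded by $\delta^{\alpha}\mathcal M(f)(x)$ and a far part bounded via the $X$--$X'$ duality by $\|f\|_X\,\delta^{-\alpha/(\beta-1)}$, then optimize in $\delta$ and invoke Assumption~\ref{max-assump}. The necessity argument is also the same as the paper's Lemma~\ref{Iapoint}. One minor difference is that you supply a direct, self-contained proof of the averaging estimate $\|\mathbf 1_B\|_{X'}\lesssim|B|/\|\mathbf 1_B\|_X$ from the boundedness of $\mathcal M$, whereas the paper quotes this as Lemma~\ref{is2017} from \cite{is2017}; your argument is clean and avoids the external reference. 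Note also that your ``two uses'' of \eqref{main-assump1} in the far part are really one: combining $(2^k\delta)^\alpha\lesssim\|\mathbf 1_{B(x,2^{k+1}\delta)}\|_X^{(\beta-1)/\beta}$ with the denominator $\|\mathbf 1_{B(x,2^{k+1}\delta)}\|_X$ directly gives the factor $\|\mathbf 1_{B(x,2^{k+1}\delta)}\|_X^{-1/\beta}\lesssim(2^{k+1}\delta)^{-\alpha/(\beta-1)}$, which is exactly how the paper phrases it.
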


To prove Theorem \ref{Th-a-b}, we need the following two lemmas.
The following lemma is a corollary of \cite[Lemma 2.2]{is2017}; we omit the details here.

\begin{lemma}\label{is2017}
Let $X$ be a ball Banach function space satisfying Assumption \ref{max-assump}.
Then
\begin{align*}
\sup_{B\in\mathbb{B}(\rn)}
\frac{1}{|B|}\|\mathbf{1}_B\|_X
\|\mathbf{1}_B\|_{X'}<\infty.
\end{align*}
\end{lemma}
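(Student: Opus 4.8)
The plan is to prove the quantitative upper bound $\|\mathbf{1}_B\|_X\|\mathbf{1}_B\|_{X'}\le C|B|$ with a constant $C$ independent of the ball $B$, which is precisely the finiteness of the claimed supremum. The only input beyond the axioms of a ball Banach function space is Assumption \ref{max-assump}. Indeed, the reverse (and trivial) inequality $|B|\le\|\mathbf{1}_B\|_X\|\mathbf{1}_B\|_{X'}$ already follows from the very definition of the associate norm: testing Definition \ref{de-X'} with $g:=\mathbf{1}_B/\|\mathbf{1}_B\|_X$ gives $|B|/\|\mathbf{1}_B\|_X\le\|\mathbf{1}_B\|_{X'}$. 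Thus the entire content of the lemma lies in using the boundedness of $\mathcal M$ on $X$ to control the associate norm $\|\mathbf{1}_B\|_{X'}$ \emph{from above}.

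First I would fix a ball $B\in\mathbb{B}(\rn)$ and an arbitrary $f\in X$ with $\|f\|_X=1$. Since $X$ is a ball Banach function space, item (vi) of Definition \ref{Debqfs} yields $\int_B|f|\le C_{(B)}\|f\|_X<\fz$, so $f\in L^1_{\loc}(\rn)$ and $\mathcal M(f)$ is well defined. The key observation is an elementary pointwise estimate: for every $x\in B$, the ball $B$ is among the balls containing $x$ over which the supremum defining $\mathcal M(f)(x)$ is taken, whence
\begin{align*}
\frac{1}{|B|}\int_B|f(y)|\,dy\le\mathcal M(f)(x).
\end{align*}
Multiplying by $\mathbf{1}_B$ gives the pointwise inequality $\frac{1}{|B|}\lf(\int_B|f|\r)\mathbf{1}_B\le\mathcal M(f)$ on $\rn$ between nonnegative functions.

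Next I would take the $X$-quasi-norm of both sides. By the lattice property (item (ii) of Definition \ref{Debqfs}), the homogeneity of $\|\cdot\|_X$, and then Assumption \ref{max-assump},
\begin{align*}
\frac{1}{|B|}\lf(\int_B|f|\r)\|\mathbf{1}_B\|_X
\le\|\mathcal M(f)\|_X\le C\|f\|_X=C,
\end{align*}
where $C$ is the operator norm of $\mathcal M$ on $X$ and, in particular, is independent of both $B$ and $f$. Since $\mathbf{1}_B\in X$ by item (iv) and $\|\mathbf{1}_B\|_X\in(0,\fz)$ (using Remark \ref{rem-ball-B}(i) together with $|B|>0$), the division is legitimate and I obtain $\int_B|f|\le C|B|/\|\mathbf{1}_B\|_X$. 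Taking the supremum over all $f\in X$ with $\|f\|_X=1$ and invoking Definition \ref{de-X'} then gives
\begin{align*}
\|\mathbf{1}_B\|_{X'}=\sup_{\|f\|_X=1}\int_B|f|
\le\frac{C|B|}{\|\mathbf{1}_B\|_X},
\end{align*}
that is, $\frac{1}{|B|}\|\mathbf{1}_B\|_X\|\mathbf{1}_B\|_{X'}\le C$; taking the supremum over $B\in\mathbb{B}(\rn)$ completes the proof.

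This argument is short and essentially self-contained, so there is no serious technical obstacle. The one conceptual point — which is also the crux of the proof — is recognizing that the maximal-operator bound is what controls $\|\mathbf{1}_B\|_{X'}$, rather than $\|\mathbf{1}_B\|_X$, from above; accordingly the natural order of the steps is to estimate $\int_B|f|$ for unit-norm $f\in X$ first and only then pass to the supremum defining the associate norm. The remaining care is purely bookkeeping: ensuring that $\|\mathbf{1}_B\|_X$ is finite and strictly positive so that the rearrangement is valid, and noting that the supremum defining $\|\cdot\|_{X'}$ may be taken over unit-norm $f$.
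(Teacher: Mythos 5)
Your proof is correct and matches the paper's route: the paper omits the details and simply cites \cite[Lemma 2.2]{is2017}, and your argument --- the pointwise bound $\frac{1}{|B|}(\int_B|f|)\,\mathbf{1}_B\le\mathcal M(f)$ for unit-norm $f\in X$, followed by the lattice property, the boundedness of $\mathcal M$ on $X$ from Assumption \ref{max-assump}, and the definition of the associate norm --- is precisely the standard proof of that cited lemma. In effect you have supplied, correctly and with the right bookkeeping on $0<\|\mathbf{1}_B\|_X<\infty$, the proof the paper outsources to the reference.
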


\begin{lemma}\label{Iapoint}
Let $\alpha\in(0,n)$ and $I_\alpha$ be the same as in \eqref{cla-I}.
Then there exists a
positive constant $C$ such that, for any ball $B\in\mathbb{B}(\rn)$,
$$I_\alpha(\mathbf{1}_B)\geq
C|B|^{\frac{\alpha}{n}}\mathbf{1}_B
$$
almost everywhere on $\rn$.
\end{lemma}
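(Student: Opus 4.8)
The plan is to estimate $I_\alpha(\mathbf{1}_B)(x)$ from below when $x$ lies in the ball $B$, using the very definition \eqref{cla-I} of the fractional integral. Fix a ball $B:=B(x_0,r)$ with $x_0\in\rn$ and $r\in(0,\fz)$, and fix $x\in B$. By definition,
\begin{align*}
I_\alpha(\mathbf{1}_B)(x)
=\int_{B}\frac{1}{|x-y|^{n-\alpha}}\,dy.
\end{align*}
The key observation is that, since both $x$ and $y$ belong to $B$, we have the crude upper bound $|x-y|\le 2r$ for every $y\in B$, and hence $|x-y|^{n-\alpha}\le(2r)^{n-\alpha}$ (using $\alpha\in(0,n)$, so $n-\alpha>0$). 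Therefore the integrand is bounded below by $(2r)^{-(n-\alpha)}$ on all of $B$, which immediately gives
\begin{align*}
I_\alpha(\mathbf{1}_B)(x)
\ge(2r)^{-(n-\alpha)}\int_{B}\,dy
=(2r)^{-(n-\alpha)}|B|.
\end{align*}

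The next step is simply to rewrite this in terms of $|B|$. First I would recall that $|B|=v_n r^n$, where $v_n$ denotes the volume of the unit ball, so that $r=(|B|/v_n)^{1/n}$ and hence $r^{-(n-\alpha)}$ is a fixed constant multiple of $|B|^{-(n-\alpha)/n}=|B|^{\frac{\alpha}{n}-1}$. Substituting, the right-hand side becomes a positive constant (depending only on $n$ and $\alpha$, through $v_n$ and the factor $2^{-(n-\alpha)}$) times $|B|^{\frac{\alpha}{n}-1}\cdot|B|=|B|^{\frac{\alpha}{n}}$. This yields
\begin{align*}
I_\alpha(\mathbf{1}_B)(x)\ge C|B|^{\frac{\alpha}{n}}
\end{align*}
for every $x\in B$, with $C$ independent of $B$, which is precisely the claimed pointwise inequality $I_\alpha(\mathbf{1}_B)\ge C|B|^{\frac{\alpha}{n}}\mathbf{1}_B$ (the inequality holding trivially, as $I_\alpha(\mathbf{1}_B)\ge0$, off $B$).

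There is no real obstacle here: the estimate is a direct consequence of the positivity of the kernel together with the trivial geometric bound $|x-y|\le\diam(B)=2r$ for $x,y\in B$. The only point requiring mild care is bookkeeping the scaling in $r$ versus $|B|$ to confirm that the constant $C$ is genuinely independent of the ball, which is guaranteed by the homogeneity of both the kernel $|\cdot|^{-(n-\alpha)}$ and Lebesgue measure under dilations. I would emphasize that this lemma uses \emph{none} of the structural assumptions on $X$; it is purely a statement about $I_\alpha$ and will serve, in the proof of Theorem \ref{Th-a-b}, as the lower bound feeding into the necessity of condition \eqref{main-assump1}.
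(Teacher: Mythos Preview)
Your proof is correct and follows essentially the same approach as the paper: bound $|x-y|$ by the diameter $2r$ for $x,y\in B$, so that the kernel is bounded below by $(2r)^{-(n-\alpha)}$ on $B$, and then convert $r^{-(n-\alpha)}|B|$ into a constant times $|B|^{\alpha/n}$. The paper's version is more terse (writing $\gtrsim r_B^{-(n-\alpha)}\int_{\rn}\mathbf{1}_B(y)\,dy\gtrsim|B|^{\alpha/n}$ directly), but the argument is the same.
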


\begin{proof}
Let all the symbols be the same as in the present lemma.
By the definition of $I_\alpha$, we find that, for any ball
$B:=(x_B,r_B)\in\mathbb{B}(\rn)$ with $x_B\in\rn$ and $r_B\in(0,\infty)$,
and for any $x\in B$,
$$I_\alpha(\mathbf{1}_B)(x)=\int_{\rn}
\frac{\mathbf{1}_B(y)}{|x-y|^{n-\alpha}}\,dy
\gtrsim\frac{1}{r_B^{n-\alpha}}\int_{\rn}\mathbf{1}_B(y)\,dy
\gtrsim|B|^{\frac{\alpha}{n}}.
$$
This finishes the proof of Lemma \ref{Iapoint}.
\end{proof}

Now, we prove Theorem \ref{Th-a-b}.

\begin{proof}[Proof of Theorem \ref{Th-a-b}]
Let all the symbols be the same as in the present theorem.
We first prove the sufficiency. To this end,
let $f\in X$. Then it is easy to show that, for any $R\in(0,\infty)$ and almost every
$x\in\rn$,
\begin{align}\label{*}
|I_\alpha (f)(x)|\leq\int_{|y|\leq R}
\frac{|f(x-y)|}{|y|^{n-\alpha}}\,dy
+\int_{|y|> R}
\frac{|f(x-y)|}{|y|^{n-\alpha}}\,dy
=:F_1(x)+F_2(x).
\end{align}

Now, we estimate $F_1(x)$. Indeed, by both the definition of
$\mathcal{M}(f)$ and $\alpha\in(0,n)$, it is easy to prove that,
for any $R\in(0,\infty)$ and almost every $x\in\rn$,
\begin{align}\label{*1}
F_1(x)&=\sum_{j=0}^{\infty}\int_{2^{-j-1}R<|y|\leq2^{-j}R}
\frac{|f(x-y)|}{|y|^{n-\alpha}}\,dy\\
&\sim\sum_{j=0}^{\infty}\frac{1}{(2^{-j-1}R)^{n-\alpha}}
\int_{2^{-j-1}R<|y|\leq2^{-j}R}|f(x-y)|\,dy\noz\\
&\lesssim R^\alpha\sum_{j=0}^{\infty}
\frac{(2^{-j})^\alpha}{(2^{-j}R)^n}
\int_{|y|\leq2^{-j}R}|f(x-y)|\,dy\lesssim R^\alpha \mathcal{M}(f)(x).\noz
\end{align}
This is a desired estimate of $F_1(x)$.

Next, we consider $F_2(x)$. From the definition of $\|\cdot\|_{X'}$,
we deduce that, for any $R\in(0,\infty)$ and almost every $x\in\rn$,
\begin{align}\label{*2}
F_2(x)&=\sum_{j=0}^{\infty}\int_{2^{j}R<|y|\leq2^{j+1}R}
\frac{|f(x-y)|}{|y|^{n-\alpha}}
\,dy\sim\sum_{j=0}^{\infty}\frac{1}{(2^{j}R)^{n-\alpha}}
\int_{|y-x|\leq2^{j+1}R}|f(y)|\,dy\\
&\lesssim\sum_{j=0}^{\infty}\frac{1}{(2^{j}R)^{n-\alpha}}
\left\|f\right\|_X\left\|\mathbf{1}_{B(x,2^{j+1}R)}\right\|_{X'}.\noz
\end{align}
Moreover, by Lemma \ref{is2017} and \eqref{main-assump1},
we conclude that, for any ball $B\in\mathbb{B}(\rn)$,
\begin{align*}
\|\mathbf{1}_B\|_{X'}\lesssim
\frac{|B|}{\|\mathbf{1}_B\|_X}\lesssim
|B|^{\frac{\alpha\beta}{n(1-\beta)}+1},
\end{align*}
which, combined with \eqref{*2}, $\alpha\in(0,n)$, and $\beta\in(1,\infty)$, further
implies that,
for any $R\in(0,\infty)$ and almost every $x\in\rn$,
\begin{align}\label{*2'}
F_2(x)\ls\sum_{j=0}^{\infty}
2^{\frac{\alpha j}{1-\beta}}R^{\frac{\alpha}{1-\beta}}\|f\|_X
\lesssim R^{\frac{\alpha}{1-\beta}}\|f\|_X.
\end{align}
This is a desired conclusion of $F_2(x)$.

All together, combining \eqref{*}, \eqref{*1}, and \eqref{*2'} with
$R:=[\frac{\|f\|_X}{\mathcal{M}(f)(x)}]^{\frac{\beta-1}{\alpha\beta}}$,
we obtain, for almost every $x\in\rn$,
$$|I_\alpha (f)(x)|\lesssim\|f\|_X^{1-\frac{1}{\beta}}
[\mathcal{M}(f)(x)]^{\frac{1}{\beta}}.$$
By this, the definition of $\|\cdot\|_{X^\beta}$, and Assumption
\ref{max-assump}, we find that
\begin{align*}
\|I_\alpha (f)\|_{X^\beta}\lesssim
\|f\|_X^{1-\frac{1}{\beta}}\lf\|[\mathcal{M}(f)]^{\frac{1}{\beta}}
\r\|_{X^\beta}
\sim
\|f\|_X^{1-\frac{1}{\beta}}\lf\|\mathcal{M}(f)\r\|_{X}^{\frac{1}{\beta}}
\lesssim\|f\|_X.
\end{align*}
This finishes the proof of the sufficiency.

Now, we show the necessity. Indeed, from Lemma \ref{Iapoint} and the boundedness of
$I_\alpha$ from $X$ to $X^\beta$, we deduce that, for any ball $B\in\mathbb{B}(\rn)$,
$$|B|^{\frac{\alpha}{n}}\|\mathbf{1}_B\|_{X^\beta}
\lesssim\|I_\alpha(\mathbf{1}_B)\|_{X^\beta}
\lesssim\|\mathbf{1}_B\|_X.
$$
Combining this and the definition of $\|\cdot\|_{X^\beta}$, we conclude that,
for any ball $B\in\mathbb{B}(\rn)$,
$$|B|^{\frac{\alpha}{n}}
\lesssim\|\mathbf{1}_B\|_X\|\mathbf{1}_B\|_{X^\beta}^{-1}
\sim\|\mathbf{1}_B\|
_X^{\frac{\beta-1}{\beta}}.
$$
This finishes the proof of the necessity, and hence of
Theorem \ref{Th-a-b}.
\end{proof}

As an application of Theorem \ref{Th-a-b},
we give the boundedness of fractional maximal operators from $X$ to
$X^\beta$.
In what follows, for any $\alpha\in(0,n)$,
the \emph{central fractional maximal operator $\mathcal{M}_\alpha$}
is defined by setting, for any $f\in L^1_{\mathrm{loc}}(\rn)$ and
$x\in\rn$,
\begin{align}\label{de-Ma}
\mathcal{M}_\alpha(f)(x):=\sup_{r\in(0,\infty)}
\frac{1}{r^{n-\alpha}}\int_{|y|\leq r}|f(x-y)|\,dy.
\end{align}

\begin{theorem}\label{coclassic}
Let $X$ be a ball Banach space satisfying Assumption
\ref{max-assump}, $\alpha\in(0,n)$, and $\beta\in(1,\infty)$.
Then $\mathcal{M}_\alpha$ is bounded from $X$ to $X^\beta$, namely,
there exists a positive constant $C$ such that, for any $f\in X$,
\begin{align*}
\lf\|\mathcal{M}_{\alpha}(f)\r\|_{X^{\beta}}\leq C\|f\|_{X}
\end{align*}
if and only if there exists a positive constant $\wz C$ such that,
for any ball $B\in\mathbb{B}(\rn)$,
\begin{align*}
|B|^{\frac{\alpha}{n}}\leq \wz C
\|\mathbf{1}_B\|_X^{\frac{\beta-1}{\beta}}.
\end{align*}
\end{theorem}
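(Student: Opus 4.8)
The plan is to reduce both implications to the already-established Theorem \ref{Th-a-b} by means of a pointwise comparison between $\mathcal{M}_\alpha$ and $I_\alpha$, supplemented by an analogue of Lemma \ref{Iapoint}. For the sufficiency, I would first record the elementary pointwise domination
$$
\mathcal{M}_\alpha(f)(x)\le I_\alpha(|f|)(x)\qquad\text{for a.e. } x\in\rn .
$$
Indeed, for any $r\in(0,\infty)$ and any $y$ with $|y|\le r$ one has $|y|^{n-\alpha}\le r^{n-\alpha}$, so that
$\frac{1}{r^{n-\alpha}}\int_{|y|\le r}|f(x-y)|\,dy\le\int_{|y|\le r}\frac{|f(x-y)|}{|y|^{n-\alpha}}\,dy\le I_\alpha(|f|)(x)$; taking the supremum over $r\in(0,\infty)$ yields the claim. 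Assuming now that \eqref{main-assump1} holds, Theorem \ref{Th-a-b} guarantees that $I_\alpha$ is bounded from $X$ to $X^\beta$. Combining this with the monotonicity axiom (ii) of Definition \ref{Debqfs}, which is valid on the convexification $X^\beta$, I obtain $\|\mathcal{M}_\alpha(f)\|_{X^\beta}\le\|I_\alpha(|f|)\|_{X^\beta}\ls\||f|\|_X=\|f\|_X$, which is precisely the desired boundedness.

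For the necessity, the key step is to establish the counterpart of Lemma \ref{Iapoint}, namely that there exists a positive constant $C$ such that, for every ball $B:=B(x_B,r_B)\in\mathbb{B}(\rn)$,
$$
\mathcal{M}_\alpha(\mathbf{1}_B)\ge C|B|^{\frac{\alpha}{n}}\mathbf{1}_B
$$
almost everywhere on $\rn$. To see this, I would fix $x\in B$ and test the supremum defining $\mathcal{M}_\alpha(\mathbf{1}_B)(x)$ at the single radius $r:=2r_B$; since every $z\in B$ satisfies $|z-x|\le|z-x_B|+|x_B-x|<2r_B$, one has $B\subset B(x,2r_B)$ and hence $\frac{1}{(2r_B)^{n-\alpha}}\int_{|y|\le 2r_B}\mathbf{1}_B(x-y)\,dy=\frac{|B|}{(2r_B)^{n-\alpha}}\sim r_B^\alpha\sim|B|^{\frac{\alpha}{n}}$. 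Granting this lower bound, I would then repeat verbatim the closing computation of the necessity part of Theorem \ref{Th-a-b}: the assumed boundedness of $\mathcal{M}_\alpha$ gives $|B|^{\frac{\alpha}{n}}\|\mathbf{1}_B\|_{X^\beta}\ls\|\mathcal{M}_\alpha(\mathbf{1}_B)\|_{X^\beta}\ls\|\mathbf{1}_B\|_X$, and using $\|\mathbf{1}_B\|_{X^\beta}=\|\mathbf{1}_B\|_X^{1/\beta}$ (valid because $\mathbf{1}_B^\beta=\mathbf{1}_B$) yields $|B|^{\frac{\alpha}{n}}\ls\|\mathbf{1}_B\|_X^{\frac{\beta-1}{\beta}}$.

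Since both directions are short consequences of Theorem \ref{Th-a-b} together with the two elementary pointwise estimates above, I do not anticipate a genuine obstacle here; this is why the statement is phrased as an application. The only points demanding attention are the validity of the monotonicity axiom on the convexification $X^\beta$ (so that the pointwise domination transfers to the quasi-norm in the sufficiency) and the identities $\|f\|_X=\||f|\|_X$ and $\mathbf{1}_B^\beta=\mathbf{1}_B$, which let Theorem \ref{Th-a-b} and the definition of $\|\cdot\|_{X^\beta}$ be invoked cleanly.
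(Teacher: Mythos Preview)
Your proposal is correct and follows essentially the same approach as the paper: both directions rely on the pointwise comparison $\mathcal{M}_\alpha(f)\le I_\alpha(|f|)$ to invoke Theorem \ref{Th-a-b} for sufficiency, and on the lower bound $\mathcal{M}_\alpha(\mathbf{1}_B)\gtrsim|B|^{\alpha/n}\mathbf{1}_B$ (tested at a radius comparable to $r_B$) to rerun the necessity argument of Theorem \ref{Th-a-b}. Your justifications for these two pointwise estimates are in fact slightly more explicit than the paper's.
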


\begin{proof}
Let all the symbols be the same as in the present theorem.
We first prove the sufficiency.
To this end, let $I_{\alpha}$ be the same as in \eqref{cla-I} and $f\in
L^1_{\mathrm{loc}}(\rn)$.
It is well known that, for any $r\in(0,\fz)$ and $x\in\rn$,
$\mathcal{M}_\alpha(f)(x)
\leq I_{\alpha}(|f|)(x)$ (see, for instance, \cite[p.\,138, (3.2.4)]{LDY2007}).
Combining this, Definition \ref{Debqfs}, and Theorem \ref{Th-a-b}, we find that, for any $f\in L^1_{\mathrm{loc}}(\rn)$,
\begin{align*}
\lf\|\mathcal{M}_\alpha(f)\r\|_{X^\beta}\leq
\lf\|I_{\alpha}(|f|)\r\|_{X^\beta}\lesssim
\|f\|_X.
\end{align*}
This finishes the proof of the sufficiency.

Now, we prove the necessity.
Notice that, for any ball $B:=(x_B,r_B)\in\mathbb{B}(\rn)$ with $x_B\in\rn$ and $r_B\in(0,\infty)$, and for any $x\in B$,
\begin{align*}
\mathcal{M}_\alpha(\mathbf{1}_B)(x)&=\sup_{r\in(0,\infty)}
\frac{1}{r^{n-\alpha}}\int_{|y|\leq r}|\mathbf{1}_B(x-y)|\,dy\\
&\gtrsim\frac{1}{r_B^{n-\alpha}}\int_{\rn}\mathbf{1}_B(y)\,dy
\gtrsim|B|^{\frac{\alpha}{n}}.
\end{align*}
Using this and repeating the proof of the necessity of Theorem \ref{Th-a-b} with
$I_\alpha$ replaced by $\mathcal{M}_\alpha$, we complete the
proof of the necessity, and hence of Theorem \ref{coclassic}.
\end{proof}

Next, we establish the boundedness of $I_\alpha$ from $H_X(\rn)$ to
$H_{X^\beta}(\rn)$ with $\alpha\in(0,n)$
under the assumption that $X$ has an absolutely continuous quasi-norm.

\begin{theorem}\label{thm-Ia-02}
Let $X$, $p_-$, $r_0\in(0,\min\{\frac{1}{\beta},p_-\})$, and
$p_0\in(r_0,\infty)$ satisfy both
Assumptions \ref{assump1} and \ref{assump2} with $\beta\in(1,\infty)$.
Further assume that $X$ has an absolutely continuous quasi-norm.
Let $\alpha\in(0,n)$
and $I_\alpha$ be the same as in \eqref{cla-I}.
Then $I_\alpha$ can be extended to a unique bounded linear
operator, still denoted by $I_\alpha$, from $H_X(\rn)$ to
$H_{X^\beta}(\rn)$, namely,
there exists a positive constant $C$ such that, for any $f\in
H_X(\rn)$,
\begin{align*}
\lf\|I_{\alpha}(f)\r\|_{H_{X^{\beta}}(\rn)}\leq C\|f\|_{H_X(\rn)}
\end{align*}
if and only if there exists a positive constant $\wz C$ such that,
for any ball $B\in\mathbb{B}(\rn)$,
\begin{align*}
|B|^{\frac{\alpha}{n}}\leq \wz C
\|\mathbf{1}_B\|_X^{\frac{\beta-1}{\beta}}.
\end{align*}
\end{theorem}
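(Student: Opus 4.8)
The plan is to prove the two implications by quite different routes: the sufficiency (``if'') through the atomic--molecular machinery, and the necessity (``only if'') through a single scaling test function. For the sufficiency I would first use that $X$ has an absolutely continuous quasi-norm to reduce the target inequality $\|I_\alpha f\|_{H_{X^\beta}(\rn)}\lesssim\|f\|_{H_X(\rn)}$ to the dense subspace $H_{\mathrm{fin}}^{X,q,s,d}(\rn)$ of finite combinations of $(X,q,s)$-atoms, and afterwards extend $I_\alpha$ by density together with a routine consistency check to a unique bounded operator on all of $H_X(\rn)$. In this reduction I would take the atomic parameters with room to spare: $q$ close to $n/\alpha$ so that the Sobolev conjugate $\widetilde q$, given by $\frac1{\widetilde q}=\frac1q-\frac{\alpha}{n}$, exceeds the threshold $\max\{1,\beta p_0\}$ needed for $X^\beta$-molecules (recall from Remark \ref{main-remark}(ii) that $X^\beta$ satisfies Assumption \ref{assump2} with $p_0$ replaced by $\beta p_0$), and $s$ large.

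The core is to show that $I_\alpha$ maps each $(X,q,s)$-atom $a$ supported in a ball $B$ to a harmless multiple of an $(X^\beta,\widetilde q,\widetilde s,\epsilon)$-molecule centred at $B$, where $\widetilde s$ is the moment order required by $H_{X^\beta}(\rn)$ (no larger than $s$, since the relevant index for $X^\beta$ is $\beta p_-$) and $\epsilon>0$. Three estimates are needed. First, on $B$ and its neighbourhood the Hardy--Littlewood--Sobolev inequality $I_\alpha\colon L^q\to L^{\widetilde q}$ yields $\|I_\alpha a\|_{L^{\widetilde q}}\lesssim\|a\|_{L^q}\le|B|^{1/q}/\|\mathbf 1_B\|_X$. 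Second, on the far annuli $2^kB\setminus2^{k-1}B$ the vanishing moments of $a$ and a Taylor expansion of the Riesz kernel $|\cdot|^{\alpha-n}$ about the centre of $B$ give decay of order $2^{-k(n+s+1-\alpha)}$; comparing $\|\mathbf 1_{2^kB}\|_{X^\beta}$ with $\|\mathbf 1_B\|_{X^\beta}$ via the $X^\beta$-analogue of \eqref{key-c} then produces the molecular factor $2^{-k\epsilon}|2^kB|^{1/\widetilde q}/\|\mathbf 1_{2^kB}\|_{X^\beta}$ once $s$ is large enough to beat the loss $2^{kn/r}$. Third, $I_\alpha a$ must carry vanishing moments up to order $\widetilde s$; this follows from the cancellation of $a$ and the smoothing of $I_\alpha$ (equivalently $|\xi|^{-\alpha}\widehat a(\xi)$ vanishes to high order at the origin), and it is precisely what dictates the choice $s\ge\widetilde s+\lceil\alpha\rceil$. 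The decisive point is that matching the size bound in the first estimate to the molecular normalization exposes the coefficient $\lambda':=|B|^{\alpha/n}\|\mathbf 1_B\|_{X^\beta}/\|\mathbf 1_B\|_X$, and the hypothesis \eqref{main-assump1} is exactly what forces $\lambda'\lesssim1$.

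With this step in hand I would write $I_\alpha f=\sum_j\lambda_j\lambda'_j\mathfrak m_j$ for $f=\sum_j\lambda_ja_j$, note that $X^\beta$ inherits Assumption \ref{assump1} as well (Remark \ref{main-remark}(i)), and invoke the molecular characterization of $H_{X^\beta}(\rn)$ to bound $\|I_\alpha f\|_{H_{X^\beta}(\rn)}$ by the molecular sequence quasi-norm of $\{\lambda_j\lambda'_j,B_j\}$. Using $\lambda'_j\lesssim1$, the identity $\|\mathbf 1_B\|_{X^\beta}=\|\mathbf 1_B\|_X^{1/\beta}$, the $\beta$-convexity relation $\|g\|_{X^\beta}=\||g|^\beta\|_X^{1/\beta}$, and the ensuing $\ell^d\hookrightarrow\ell^{\beta d}$-type embedding of the sequence quasi-norms, this is controlled by the $H_X$-atomic quasi-norm of $\{\lambda_j,B_j\}$; taking the infimum over atomic decompositions of $f$ completes the sufficiency. (For $X=L^p$ this last chain is literally $\|\cdot\|_{\ell^{p\beta}}\le\|\cdot\|_{\ell^p}$, which is a useful sanity check on the exponents.)

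For the necessity I would bypass any construction of cancelling atoms and instead exploit the dilation homogeneity $I_\alpha(g(\cdot/t))=t^\alpha(I_\alpha g)(\cdot/t)$. Fix one Schwartz function $g$ whose Fourier transform is supported in an annulus away from the origin, so that $g$ has all vanishing moments and $I_\alpha g$ is a nonzero Schwartz function, and for each ball $B=B(x_B,r_B)$ put $g_B:=g((\cdot-x_B)/r_B)$. Since the grand maximal function $\mathcal M_N$ of Definition \ref{2d1} commutes with translations and dilations, the pointwise sandwiches $\mathbf 1_B\lesssim\mathcal M_N(g_B)\lesssim\sum_k2^{-kL}\mathbf 1_{2^kB}$ and its analogue for $\mathcal M_N(I_\alpha g_B)=r_B^\alpha\mathcal M_N(I_\alpha g)((\cdot-x_B)/r_B)$, together with \eqref{key-c}, give $\|g_B\|_{H_X(\rn)}\sim\|\mathbf 1_B\|_X$ and $\|I_\alpha g_B\|_{H_{X^\beta}(\rn)}\sim|B|^{\alpha/n}\|\mathbf 1_B\|_{X^\beta}=|B|^{\alpha/n}\|\mathbf 1_B\|_X^{1/\beta}$; the assumed boundedness then yields \eqref{main-assump1} at once. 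I expect the main obstacle to be the atom-to-molecule step above, namely the simultaneous bookkeeping of the three molecular conditions in the correct $X^\beta$-normalization, and in particular the verification of the vanishing moments of $I_\alpha a$ in the regime where $I_\alpha a$ is not absolutely integrable against polynomials --- this is what forces $s$ well above its minimal admissible value.
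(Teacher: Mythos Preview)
Your proposal is correct and follows essentially the same route as the paper: for sufficiency, the atom-to-molecule step (your three estimates are precisely Lemma~\ref{lemma-atm}) followed by the molecular reconstruction Lemma~\ref{2l1} applied to $X^\beta$, and for necessity, a scaling argument with a single test function. Two minor remarks: your necessity test function (a Schwartz function with annular Fourier support) is a clean variant of the paper's scaled atom $a_0(L_B^{-1}(\cdot))$; and your ``$\ell^d\hookrightarrow\ell^{\beta d}$-type embedding'' is not literally a sequence-space inclusion but rather the observation (used in \eqref{IaHx}) that the atomic quasi-norm dominates $\max_j\lambda_j$, whence $\lambda_j^\beta\le(\max_i\lambda_i)^{\beta-1}\lambda_j$ converts the $X^\beta$-molecular sum back to the $X$-atomic one---this is exactly why the reduction to \emph{finite} atomic combinations matters.
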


To prove this theorem, we need both the
atomic and the molecular characterizations of $H_X(\rn)$.
The following lemma is a consequence of both
\cite[Theorem 3.7]{SHYY} and its proof.

\begin{lemma}\label{atom-shyy}
Let both $X$ and $p_-$ satisfy Assumption \ref{assump1}. Let
$s\in[\lfloor n(\frac{1}{\min\{1,p_-\}}-1)\rfloor,\infty)\cap\zz_+$
and $d\in(0,\min\{1,p_-\})$.
Then, for any $f\in H_X(\rn)$, there exists a sequence
$\{a_{j}\}_{j\in\nn}$
of $(X,\fz,s)$-atoms supported, respectively, in
$\{B_{j}\}_{j\in\nn}\subset \mathbb{B}(\rn)$,
and a sequence $\{\lambda_{j}\}_{j\in\nn}\subset[0,\fz)$ such that
$$
f=\sum_{j\in\nn}\lambda_j a_{j}
$$
in $\mathcal{S}'(\rn)$, and
$$
\left\|\left\{\sum_{j\in\mathbb{N}}\left[\frac{\lambda_{j}}
{\|\mathbf{1}_{B_{j}}\|_X}\right]^{d}
\mathbf{1}_{B_{j}}\right\}^{\frac{1}{d}}\right\|_X
\lesssim\|f\|_{H_X(\rn)},
$$
where the implicit positive constant is independent of $f$, but may depend on
$d$.
\end{lemma}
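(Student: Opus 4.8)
The plan is to adapt the Calder\'on--Zygmund decomposition of the non-tangential grand maximal function developed in \cite{SHYY}, replacing all $L^p$-norm estimates by estimates in $X$ supplied by Assumption \ref{assump1}. Fix $f\in H_X(\rn)$, so that $\mathcal{M}_N(f)\in X$. For each $k\in\zz$ put $\Omega_k:=\{x\in\rn:\ \mathcal{M}_N(f)(x)>2^k\}$, an open subset of $\rn$ that is proper for every $k$ (a standard consequence of $\mathcal{M}_N(f)\in X$). First I would perform a Whitney decomposition $\Omega_k=\bigcup_j Q_{k,j}$ into dyadic cubes of bounded overlap whose side lengths are comparable to their distance to $\rn\setminus\Omega_k$, let $\{B_{k,j}\}_j$ be balls comparable to $\{Q_{k,j}\}_j$, and choose a smooth partition of unity $\{\xi_{k,j}\}_j$ subordinate to a fixed dilation of $\{B_{k,j}\}_j$.

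Next I would build the atoms. Following \cite{SHYY}, set $b_{k,j}:=(f-c_{k,j})\xi_{k,j}$, where $c_{k,j}$ is the projection of $f$ onto polynomials of degree at most $s$ determined by the weight $\xi_{k,j}$; this choice forces $\int_{\rn}b_{k,j}(x)x^{\gamma}\,dx=0$ for all $|\gamma|\le s$. Telescoping the associated good functions over $k$ yields a decomposition $f=\sum_{k,j}\lambda_{k,j}a_{k,j}$ in $\mathcal{S}'(\rn)$, where, after relabelling $\{(k,j)\}$ as $\{j\}_{j\in\nn}$, each $a_{k,j}$ is an $(X,\fz,s)$-atom supported in $B_{k,j}$ and $\lambda_{k,j}\sim 2^{k}\|\mathbf{1}_{B_{k,j}}\|_X$. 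The three defining properties in Definition \ref{atom} are then checked as in \cite{SHYY}: the support condition is built into the partition of unity, the vanishing moments come from the choice of $c_{k,j}$, and the sharp size bound $\|a_{k,j}\|_{L^{\fz}(\rn)}\le\|\mathbf{1}_{B_{k,j}}\|_X^{-1}$ reduces to the pointwise estimate $|(f-c_{k,j})\xi_{k,j}|\ls 2^{k}$ on $B_{k,j}$. It is precisely here that the lower bound $s\ge\lfloor n(\frac{1}{\min\{1,p_-\}}-1)\rfloor$ and the largeness of $N$ enter, through the standard control of $f$ and of its polynomial projections by $\mathcal{M}_N(f)$ on $\Omega_k$.

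It remains to estimate the atomic quasi-norm. Since $\lambda_{k,j}/\|\mathbf{1}_{B_{k,j}}\|_X\sim 2^{k}$, I would write
\[
\lf\|\lf\{\sum_{k,j}\lf[\frac{\lambda_{k,j}}{\|\mathbf{1}_{B_{k,j}}\|_X}\r]^{d}\mathbf{1}_{B_{k,j}}\r\}^{\frac1d}\r\|_X\ls\lf\|\lf(\sum_{k,j}2^{kd}\mathbf{1}_{B_{k,j}}\r)^{\frac1d}\r\|_X.
\]
Because each $B_{k,j}$ is a fixed dilation of the Whitney cube $Q_{k,j}$ and, for fixed $k$, the cubes $\{Q_{k,j}\}_j$ have disjoint interiors with $\bigcup_j Q_{k,j}=\Omega_k$, the dilation estimate of Remark \ref{main-remark}(iii), itself a direct consequence of Assumption \ref{assump1}, absorbs both the enlargement and the overlap and bounds the right-hand side by $\|(\sum_k 2^{kd}\mathbf{1}_{\Omega_k})^{1/d}\|_X$ (here $d\in(0,\min\{1,p_-\})$ guarantees an admissible exponent $r\in(0,d)$ in that remark). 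Finally, since $\Omega_k$ decreases in $k$, the layer-cake comparison $\sum_{k\in\zz}2^{kd}\mathbf{1}_{\Omega_k}(x)\sim[\mathcal{M}_N(f)(x)]^{d}$ holds pointwise, whence
\[
\lf\|\lf(\sum_{k\in\zz}2^{kd}\mathbf{1}_{\Omega_k}\r)^{\frac1d}\r\|_X\sim\|\mathcal{M}_N(f)\|_X=\|f\|_{H_X(\rn)},
\]
which is the asserted bound.

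The main obstacle will be the two points that cannot be read off from an explicit norm in the abstract ball quasi-Banach setting: the convergence $f=\sum_{k,j}\lambda_{k,j}a_{k,j}$ in $\mathcal{S}'(\rn)$, and the uniform $L^{\fz}$-control that upgrades the atoms to genuine $(X,\fz,s)$-atoms rather than mere $L^q$-atoms. The distributional convergence must be extracted from the pointwise decay of $\mathcal{M}_N(f)$ together with the completeness recorded in Remark \ref{rem-ball-B}(iv), while the sharp sup-norm estimate on $(f-c_{k,j})\xi_{k,j}$ is the technical heart of \cite[Theorem 3.7]{SHYY}. All of these estimates carry over once Assumption \ref{assump1} is in force, which is exactly why the lemma follows from \cite[Theorem 3.7]{SHYY} and its proof.
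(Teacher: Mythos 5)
Your proposal is correct and coincides with the paper's own route: the paper proves this lemma simply by invoking \cite[Theorem 3.7]{SHYY} and its proof, and your sketch is a faithful reconstruction of exactly that Calder\'on--Zygmund decomposition of $\mathcal{M}_N(f)$ (Whitney cubes, polynomial projections, $\lambda_{k,j}\sim 2^k\|\mathbf{1}_{B_{k,j}}\|_X$, the layer-cake comparison $\sum_{k\in\zz}2^{kd}\mathbf{1}_{\Omega_k}\sim[\mathcal{M}_N(f)]^d$, and Assumption \ref{assump1} to absorb the dilations), including the observation that the proof, not just the statement, is needed to get the full range $d\in(0,\min\{1,p_-\})$ with a $d$-dependent constant.
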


\begin{definition}\label{def-mol}
Let $X$ be a ball quasi-Banach function space, $q\in(1,\infty]$,
$s\in\zz_+$, and $\tau\in(0,\fz)$. Then a measurable function $M$ on
$\rn$ is called an
\emph{$(X,q,s,\tau)$-molecule} centered at a ball
$B\in \mathbb{B}(\rn)$ if
\begin{enumerate}
\item[(i)] for any $j\in\zz_+$,
\begin{align*}
\lf\|M\mathbf{1}_{L_j}\r\|_{L^q(\rn)}
\leq 2^{-j\tau}\frac{|B|^{\frac{1}{q}}}{\|\mathbf{1}_{B}\|_{X}},
\end{align*}
where $L_0:=B$ and, for any $j\in\nn$, $L_j:=2^{j}B\setminus
2^{j-1}B$;
\item[(ii)] $\int_{\rn} M(x)x^\gamma\,dx=0$ for any
$\gamma\in\zz_+^n$ with $|\gamma|\le s$.
\end{enumerate}
\end{definition}

\begin{remark}\label{mole-atom}
Let all the symbols be the same as in Definition \ref{def-mol}.
It is easy to show that any $(X,q,s)$-atom is also an
$(X,q,s,\tau)$-molecule.
\end{remark}

From both \cite[Theorem 3.9]{SHYY} and its proof, we can
deduce the following lemma; we omit the details.

\begin{lemma}\label{2l1}
Let $X$, $q$, $s$, and $d$ be the same as in Definition \ref{finatom}, and
$\tau\in (n[\frac{1}{\min\{1,p_{-}\}}-\frac{1}{q}],\fz)$. Then $f\in H_X(\rn)$ if and
only if
there exists a sequence $\{M_j\}_{j\in\nn}$ of
$(X,q,s,\tau)$-molecules centered,
respectively, at balls $\{B_j\}_{j\in\nn}\subset\mathbb{B}(\rn)$ and
$\{{\lambda}_j\}_{j\in\nn}\subset[0,\infty)$ satisfying
$$\left\|\left\{\sum_{j\in\nn}
\left(\frac{{\lambda}_j}{\|{\mathbf{1}}_{B_j}\|_X}\right)^d
{\mathbf{1}}_{B_j}\right\}^{\frac1d}\right\|_{X}<\fz$$
and $f=\sum_{j\in\nn}{\lambda}_jM_j$ in $\mathcal{S}'(\rn)$.
Moreover,
$$\lf\|f\r\|_{H_X(\rn)}\sim\left\|\left\{\sum_{j\in\nn}
\left(\frac{{\lambda}_j}{\|{\mathbf{1}}_{B_j}\|_X}\right)^d
{\mathbf{1}}_{B_j}\right\}^{\frac1d}\right\|_{X},$$
where the positive equivalence constants are independent of $f$.
\end{lemma}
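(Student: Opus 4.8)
The plan is to prove both implications, deducing the result from the atomic characterization in Lemma \ref{atom-shyy} together with the grand maximal machinery underlying \cite[Theorem 3.9]{SHYY}. The necessity is immediate: given $f\in H_X(\rn)$, Lemma \ref{atom-shyy} yields a decomposition $f=\sum_{j\in\nn}\lambda_ja_j$ in $\mathcal{S}'(\rn)$ into $(X,\fz,s)$-atoms with $\|\{\sum_{j\in\nn}[\lambda_j/\|\mathbf 1_{B_j}\|_X]^d\mathbf 1_{B_j}\}^{1/d}\|_X\lesssim\|f\|_{H_X(\rn)}$. Each $(X,\fz,s)$-atom $a_j$ supported in $B_j$ satisfies $\|a_j\|_{L^q(\rn)}\le\|a_j\|_{L^\fz(\rn)}|B_j|^{1/q}\le|B_j|^{1/q}/\|\mathbf 1_{B_j}\|_X$, so it is an $(X,q,s)$-atom and hence, by Remark \ref{mole-atom}, an $(X,q,s,\tau)$-molecule. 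Thus the atomic decomposition is already an admissible molecular one, which bounds the molecular norm from above by $\|f\|_{H_X(\rn)}$.

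For the sufficiency, suppose $f=\sum_{j\in\nn}\lambda_jM_j$ in $\mathcal{S}'(\rn)$ with $(X,q,s,\tau)$-molecules $M_j$ centered at $B_j$ and $\mathcal A:=\|\{\sum_{j\in\nn}(\lambda_j/\|\mathbf 1_{B_j}\|_X)^d\mathbf 1_{B_j}\}^{1/d}\|_X<\fz$; the goal is $\|f\|_{H_X(\rn)}=\|\mathcal M_N(f)\|_X\lesssim\mathcal A$. I would first check that $\sum_{j\in\nn}\lambda_jM_j$ converges in $\mathcal{S}'(\rn)$: pairing with $\varphi\in\mathcal{S}(\rn)$, the vanishing moments of $M_j$ up to order $s$ allow me to subtract the order-$s$ Taylor polynomial of $\varphi$ at the center of $B_j$, and, combined with the annular $L^q$ bounds of Definition \ref{def-mol} and the structural control of $\|\mathbf 1_{B_j}\|_X$ under Assumptions \ref{assump1} and \ref{assump2}, this gives $\sum_{j\in\nn}\lambda_j|\langle M_j,\varphi\rangle|<\fz$.

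The crux is a pointwise bound for the grand maximal function of a single molecule. For an $(X,q,s,\tau)$-molecule $M$ centered at $B=B(x_B,r_B)$, I would split $\rn$ according to the distance from $x_B$ and establish
\begin{align*}
\mathcal M_N(M)(x)\lesssim\frac{1}{\|\mathbf 1_B\|_X}\sum_{i\in\zz_+}2^{-i\sigma}
\left[\mathcal M\left(\mathbf 1_{2^iB}\right)(x)\right]^{\frac1\theta}
\end{align*}
for some $\sigma\in(0,\fz)$ and $\theta\in(0,\min\{1,p_-\})$. On each dilate $2^{i+1}B\setminus2^iB$, the near-field part of $M*\phi_t$ is dominated by a Hardy--Littlewood maximal function fed by the annular $L^q$ estimates (via a Hölder step exploiting $q>p_0$), whereas the far-field part is controlled using the $s+1$ vanishing moments, a Taylor expansion of $\phi_t$, and the annular decay $2^{-i\tau}$ of $M$; the hypothesis $\tau>n[\frac{1}{\min\{1,p_-\}}-\frac1q]$ is exactly what renders the resulting scale-sum convergent with a positive rate $\sigma$.

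Finally, by the sublinearity of $\mathcal M_N$ and $d\le1$, $\mathcal M_N(f)\le\sum_{j\in\nn}\lambda_j\mathcal M_N(M_j)\le[\sum_{j\in\nn}(\lambda_j\mathcal M_N(M_j))^d]^{1/d}$. Inserting the pointwise bound, interchanging the sums, taking $\|\cdot\|_X$, and applying the Fefferman--Stein vector-valued inequality in the dilated form of Remark \ref{main-remark}(iii) together with \eqref{key-c} scale by scale, the dilation factors $(2^i)^{n/r}$ are absorbed by $2^{-i\sigma}$ once $\sigma$ is large, and the whole expression collapses to $\mathcal A$, giving $\|f\|_{H_X(\rn)}\lesssim\mathcal A$. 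I expect the main obstacle to be the pointwise grand maximal estimate above: one must reconcile the near-field maximal bound with the far-field vanishing-moment decay so that $\theta$ simultaneously lies below $\min\{1,p_-\}$ (to activate Assumption \ref{assump1}) and is compatible with $q$, while $\sigma$ stays positive under $\tau>n[\frac{1}{\min\{1,p_-\}}-\frac1q]$—it is precisely here that the constraints on $q$, $d$, $s$, $\tau$, $p_-$, and $r_0$ must interlock so that the final Fefferman--Stein summation converges.
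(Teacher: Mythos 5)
Your necessity direction is exactly the standard deduction and coincides with what the paper relies on (note the paper offers no proof of its own here: it derives the lemma from \cite[Theorem 3.9]{SHYY} and its proof): Lemma \ref{atom-shyy} gives an $(X,\infty,s)$-atomic decomposition, each $(X,\infty,s)$-atom is an $(X,q,s)$-atom by H\"older on its support, and Remark \ref{mole-atom} upgrades atoms to molecules. Your far-field analysis is also sound in outline: the vanishing moments, the annular decay $2^{-i\tau}$, and the hypothesis $\tau>n[\frac{1}{\min\{1,p_-\}}-\frac{1}{q}]$ do interlock, via Remark \ref{main-remark}(iii) and \eqref{key-c}, to make the scale-sum geometrically convergent after the Fefferman--Stein step, just as in the cited proof.

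The genuine gap is in your near-field treatment, and it is structural. The claimed pointwise estimate $\mathcal{M}_N(M)(x)\lesssim\|\mathbf{1}_B\|_X^{-1}\sum_{i\in\zz_+}2^{-i\sigma}[\mathcal{M}(\mathbf{1}_{2^iB})(x)]^{1/\theta}$ is false for finite $q$: the prefactor $\|\mathbf{1}_B\|_X^{-1}$ is an $L^\infty$-type normalization, whereas Definition \ref{def-mol} only controls $L^q$ averages on annuli. Concretely, take $X:=L^1(\rn)$ and let $M$ be an $(X,q,s)$-atom on $B:=B(\mathbf{0},1)$ concentrated on a set of measure $\varepsilon$; then $\mathcal{M}_N(M)\gtrsim\varepsilon^{-1/q}$ near the concentration set while your right-hand side stays bounded there, so the bound fails as $\varepsilon\to0$. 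Such a pointwise bound is available only when $q=\infty$, and the lemma must cover finite $q$ --- indeed this paper applies it with finite $q$ to the molecules $I_\alpha(a_j)$ in the proofs of Theorems \ref{thm-Ia-02} and \ref{thm-Ia-01}. Consequently your closing step, which invokes only Assumption \ref{assump1}, cannot absorb the local pieces; this is exactly where the proof of \cite[Theorem 3.9]{SHYY} (like that of the atomic reconstruction \cite[Theorem 3.6]{SHYY}) must use Assumption \ref{assump2}, which your proposal never touches even though it sits in the hypotheses through Definition \ref{finatom}. There, the terms $\mathcal{M}(M_j\mathbf{1}_{2^{i}B_j})$ are estimated in the $X$-(quasi-)norm by duality: one pairs $\sum_j\lambda_j^{r_0}[\mathcal{M}(M_j\mathbf{1}_{2^iB_j})]^{r_0}$ against $g\in(X^{1/r_0})'$, applies H\"older with exponent $q/r_0$ using the annular $L^q$ bounds --- this is precisely why Definition \ref{finatom} demands $q>\max\{1,p_0\}$, so that $(q/r_0)'<(p_0/r_0)'$ --- and then uses the boundedness of $\mathcal{M}^{((p_0/r_0)')}$ on $(X^{1/r_0})'$ from \eqref{ass-01}. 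To repair your argument you must split each molecule's contribution into a far-field part (your Fefferman--Stein mechanism, with the additional care that the exponent $r$ in Remark \ref{main-remark}(iii) must satisfy both $r<\min\{d,p_-\}$ and $n/r<\tau+n/q$) and a near-field part handled by this duality argument.
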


We also need the following two lemmas.
The first lemma is well known (see, for instance,
\cite[p.\,119]{EMS1970}).

\begin{lemma}\label{fractional}
Let $\alpha\in(0,n)$ and $I_{\alpha}$ be the same as in \eqref{cla-I}.
Let $p\in[1,\frac{n}{\alpha})$ and $q\in(1,\infty)$ with
$\frac{1}{q}:=\frac{1}{p}-\frac{\alpha}{n}$.
\begin{enumerate}
\item[\rm (i)]
For any $f\in L^p(\rn)$, $I_\alpha(f)(x)$ is well defined for
almost every $x\in\rn$;

\item[\rm (ii)]
If $p\in(1,\frac{n}{\alpha})$, then $I_\alpha$ is bounded from
$L^p(\rn)$
to $L^{q}(\rn)$, namely, there exists a
positive constant $C$ such that, for any $f\in L^p(\rn)$,
$$\lf\|I_\alpha(f)\r\|_{L^{q}(\rn)}\leq
C\|f\|_{L^p(\rn)}.$$
\end{enumerate}
\end{lemma}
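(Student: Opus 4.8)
The plan is to treat the two assertions separately, following the classical route via a Hedberg-type pointwise estimate, which also mirrors the structure already used in the proof of Theorem \ref{Th-a-b}.

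For part (i), I would write $I_\alpha(f)=f*|\cdot|^{-(n-\alpha)}$ and split the Riesz kernel as $|\cdot|^{-(n-\alpha)}=K_0+K_\infty$, where $K_0:=|\cdot|^{-(n-\alpha)}\mathbf{1}_{B(\mathbf{0},1)}$ and $K_\infty:=|\cdot|^{-(n-\alpha)}\mathbf{1}_{\rn\setminus B(\mathbf{0},1)}$. Since $\alpha\in(0,n)$, a polar-coordinate computation shows $K_0\in L^1(\rn)$; since $p<\frac{n}{\alpha}$ forces $(n-\alpha)p'>n$ (interpreted as $K_\infty\in L^\infty(\rn)$ when $p=1$), the same computation shows $K_\infty\in L^{p'}(\rn)$. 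Young's inequality then gives $|f|*K_0\in L^p(\rn)$, while Hölder's inequality gives $\||f|*K_\infty\|_{L^\infty(\rn)}\le\|f\|_{L^p(\rn)}\|K_\infty\|_{L^{p'}(\rn)}<\fz$. Both convolutions are therefore finite for almost every $x\in\rn$, so $I_\alpha(f)(x)$ converges absolutely almost everywhere, which proves (i) for all $p\in[1,\frac{n}{\alpha})$.

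For part (ii), I would establish the Hedberg-type bound exactly as in \eqref{*} and \eqref{*1}. For each $R\in(0,\fz)$, split $|I_\alpha(f)(x)|\le F_1(x)+F_2(x)$ as in \eqref{*}. Summing over dyadic annuli as in \eqref{*1} yields the near estimate $F_1(x)\ls R^\alpha\mathcal{M}(f)(x)$; for the far part, Hölder's inequality with the bound on $\|K_\infty\|_{L^{p'}(\rn)}$ above (rescaled to radius $R$) yields $F_2(x)\ls R^{\alpha-\frac{n}{p}}\|f\|_{L^p(\rn)}$. Choosing $R:=[\|f\|_{L^p(\rn)}/\mathcal{M}(f)(x)]^{p/n}$ to balance the two terms, and using the identity $1-\frac{\alpha p}{n}=\frac{p}{q}$, I obtain, for almost every $x\in\rn$,
\begin{align*}
|I_\alpha(f)(x)|\ls \|f\|_{L^p(\rn)}^{1-\frac{p}{q}}\lf[\mathcal{M}(f)(x)\r]^{\frac{p}{q}}.
\end{align*}

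Raising this to the power $q$, integrating over $\rn$, and invoking the boundedness of $\mathcal{M}$ on $L^p(\rn)$ then gives
\begin{align*}
\|I_\alpha(f)\|_{L^q(\rn)}^q\ls\|f\|_{L^p(\rn)}^{q-p}\|\mathcal{M}(f)\|_{L^p(\rn)}^p\ls\|f\|_{L^p(\rn)}^q,
\end{align*}
which is the desired estimate. The only place where $p>1$ is genuinely used is this last step, since the $L^p$-boundedness of $\mathcal{M}$ fails at $p=1$ (there only the weak-type $(1,\frac{n}{n-\alpha})$ inequality survives), which is why part (ii) excludes the endpoint. The sole point requiring care is the exponent bookkeeping — verifying $(n-\alpha)p'>n$, the scaling exponent $\alpha-\frac{n}{p}$ of the far part, and the identity $1-\frac{\alpha p}{n}=\frac{p}{q}$ that makes the optimization in $R$ collapse to the Sobolev exponent — so there is no substantive analytic obstacle beyond the classical maximal-function inequality.
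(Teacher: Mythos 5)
Your proof is correct: the kernel splitting for (i), the Hedberg-type pointwise bound $|I_\alpha(f)(x)|\lesssim\|f\|_{L^p(\rn)}^{1-p/q}[\mathcal{M}(f)(x)]^{p/q}$ for (ii), and the exponent bookkeeping (including $(n-\alpha)p'>n$ and $1-\frac{\alpha p}{n}=\frac{p}{q}$) all check out. The paper does not prove this lemma at all — it is quoted as classical with a citation to Stein — and your argument is precisely the standard one, indeed the same near/far splitting with optimization in $R$ that the paper itself deploys (with $L^p(\rn)$ replaced by a general ball Banach function space $X$) in the sufficiency part of the proof of Theorem \ref{Th-a-b}, so there is nothing to correct beyond the cosmetic remark that choosing $R:=[\|f\|_{L^p(\rn)}/\mathcal{M}(f)(x)]^{p/n}$ tacitly uses that $\mathcal{M}(f)(x)\in(0,\infty)$ almost everywhere, which holds since $\mathcal{M}(f)\in L^p(\rn)$ and $f\not\equiv 0$.
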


\begin{lemma}\label{lemma-atm}
Let $X$ be a ball quasi-Banach space and
$s\in[n-1,\infty)\cap\zz_+$.
Let $\beta\in(1,\infty)$, $\alpha\in(0,n)$, and $I_\alpha$ be the same as in
\eqref{cla-I}.
Assume that $p\in(1,\frac{n}{\alpha})$.
If there exists a positive constant $C$ such that,
for any ball $\wz B\in\mathbb{B}(\rn)$,
\begin{align}\label{assum-01}
\lf|\wz B\r|^{\frac{\alpha}{n}}\leq C
\lf\|\mathbf{1}_{\wz B}\r\|_X^{\frac{\beta-1}{\beta}},
\end{align}
then, for any $(X,p,s)$-atom $a$ supported in a ball $B\in\mathbb{B}(\rn)$, $I_\alpha(a)$
is an $(X^\beta,q,s-n+1,\tau)$-molecule
centered at $B$, up to a harmless constant
multiple,
where $\frac{1}{q}:=\frac{1}{p}-\frac{\alpha}{n}$
and $\tau\in(0,n+s+1-\alpha-\frac{n}{q}]$.
\end{lemma}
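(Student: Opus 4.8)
The plan is to verify directly that $I_\alpha(a)$ satisfies the two defining properties of an $(X^\beta,q,s-n+1,\tau)$-molecule in the sense of Definition \ref{def-mol}, with the normalization passing from $X$ to $X^\beta$ through the single algebraic identity $\|\mathbf 1_B\|_{X^\beta}=\|\mathbf 1_B\|_X^{1/\beta}$ together with the hypothesis \eqref{assum-01}. Indeed, combining $\frac1q=\frac1p-\frac{\alpha}{n}$ with \eqref{assum-01} gives $\frac{|B|^{1/p}}{\|\mathbf 1_B\|_X}\lesssim\frac{|B|^{1/q}}{\|\mathbf 1_B\|_{X^\beta}}$, which is precisely the conversion factor needed throughout; this is the mechanism by which the geometric condition \eqref{assum-01} enters the argument.

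For the size estimate I would split the annuli $L_j$ into a near part and a far part. Writing $B=B(x_B,r_B)$, for $j\in\{0,1\}$ I would discard the localization and use the Hardy--Littlewood--Sobolev bound of Lemma \ref{fractional}(ii), namely $\|I_\alpha(a)\mathbf 1_{L_j}\|_{L^q(\rn)}\le\|I_\alpha(a)\|_{L^q(\rn)}\lesssim\|a\|_{L^p(\rn)}\le|B|^{1/p}\|\mathbf 1_B\|_X^{-1}$, and then invoke the conversion above together with $2^{-j\tau}\sim1$. For $j\ge2$ and $x\in L_j$, the support of $a$ lies at distance comparable to $2^jr_B$ from $x$; here I would use the vanishing moments of $a$ up to order $s$ to subtract the degree-$s$ Taylor polynomial of $y\mapsto|x-y|^{-(n-\alpha)}$ about $x_B$, so that the remainder is controlled by $r_B^{s+1}|x-x_B|^{-(n-\alpha+s+1)}$. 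Estimating $\int_B|a|\lesssim|B|\|\mathbf 1_B\|_X^{-1}$ by H\"older and integrating over $L_j$ (whose measure is $\lesssim(2^jr_B)^n$) would yield $\|I_\alpha(a)\mathbf 1_{L_j}\|_{L^q(\rn)}\lesssim 2^{-j(n+s+1-\alpha-n/q)}|B|^{\alpha/n}|B|^{1/q}\|\mathbf 1_B\|_X^{-1}$; the conversion factor again absorbs $|B|^{\alpha/n}\|\mathbf 1_B\|_X^{-1}$ into $\|\mathbf 1_B\|_{X^\beta}^{-1}$ and leaves exactly the decay $2^{-j\tau}$ for any $\tau\le n+s+1-\alpha-n/q$. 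Positivity of this upper bound for $\tau$, making the molecular class nonempty, follows from $s\ge n-1$ and $\alpha<n$.

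For the cancellation condition I would show $\int_{\rn}I_\alpha(a)(x)x^\gamma\,dx=0$ for every $\gamma\in\zz_+^n$ with $|\gamma|\le s-n+1$. The decay just established guarantees absolute convergence of this integral precisely when $|\gamma|<s+1-\alpha$, and since $\alpha<n$ this holds for all $|\gamma|\le s-n+1$. To see that the integral vanishes I would pass to the Fourier side: $\widehat{I_\alpha a}(\xi)=c_{n,\alpha}|\xi|^{-\alpha}\widehat a(\xi)$, and the vanishing moments of $a$ up to order $s$ force $\widehat a(\xi)=O(|\xi|^{s+1})$ near the origin, so $\widehat{I_\alpha a}$ behaves near $0$ like a function homogeneous of degree $s+1-\alpha$. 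Since $|\gamma|\le s-n+1<s+1-\alpha$, the $\gamma$-th derivative of this leading behavior is homogeneous of positive degree, hence continuous and vanishing at the origin, which is equivalent to the vanishing of the corresponding moment of $I_\alpha(a)$.

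I expect the cancellation condition to be the main obstacle, because the Riesz kernel $|x|^{-(n-\alpha)}$ has no finite polynomial moments, so Fubini cannot be applied naively to transfer the monomial $x^\gamma$ onto the kernel. Making the Fourier argument rigorous requires controlling the limited smoothness of $|\xi|^{-\alpha}\widehat a(\xi)$ at the origin (equivalently, working through a careful regularization of the kernel and a limiting argument), and it is exactly here that the interplay between the atom's moment order $s$, the loss $n-1$, and the constraint $\alpha<n$ is used. The size estimates, by contrast, are routine once the conversion identity derived from \eqref{assum-01} is in hand.
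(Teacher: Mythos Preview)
Your proposal is correct and matches the paper's argument closely. The size estimate is handled identically: the paper uses Lemma~\ref{fractional}(ii) for $j\in\{0,1\}$, the Taylor remainder of $|x-\cdot|^{-(n-\alpha)}$ together with the vanishing moments of $a$ for $j\ge2$, and the same conversion $\frac{|B|^{1/p}}{\|\mathbf 1_B\|_X}\lesssim\frac{|B|^{1/q}}{\|\mathbf 1_B\|_{X^\beta}}$ coming from \eqref{assum-01} and $\|\mathbf 1_B\|_{X^\beta}=\|\mathbf 1_B\|_X^{1/\beta}$.

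The only difference is in the cancellation step. The paper first proves $I_\alpha(a)(\cdot)|\cdot|^\gamma\in L^1(\rn)$ for $|\gamma|\le s-n+1$ (splitting into $2B$ and its complement, exactly as your decay estimates would give) and then, rather than carrying out the Fourier-side argument you sketch, simply invokes the classical result of Taibleson--Weiss \cite[p.\,104]{tw80} (see also \cite[p.\,105, Theorem~3.1]{Lu}) to conclude the vanishing moments. Your Fourier heuristic---that $\widehat{a}(\xi)=O(|\xi|^{s+1})$ forces $|\xi|^{-\alpha}\widehat a(\xi)$ to be $C^{|\gamma|}$ at the origin with vanishing derivatives whenever $|\gamma|<s+1-\alpha$---is the content of that cited argument, so the two routes ultimately coincide; the paper just outsources the delicate regularity-at-the-origin issue you flagged to the literature.
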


\begin{proof}
Let all the symbols be the same as in the present lemma.
Let $a$ be an $(X,p,s)$-atom supported in a ball $B\in\mathbb{B}(\rn)$.
We first show that $I_\alpha(a)$ satisfies Definition
\ref{def-mol}(i), namely, for any $j\in\zz_+$,
\begin{align}\label{sizecondition}
\left\|I_\alpha(a)\mathbf{1}_{L_j}\right\|_{L^q(\rn)}
\lesssim2^{-\tau
j}\frac{|B|^{\frac{1}{q}}}{\left\|\mathbf{1}_B\right\|_{X^\beta}},
\end{align}
where $L_0:=B$ and $L_j:=2^{j}B\setminus 2^{j-1}B$ for any
$j\in\nn$.

Indeed, by Lemma \ref{fractional}(ii),
$\frac{1}{q}=\frac{1}{p}-\frac{\alpha}{n}$,
Definition \ref{atom}(ii), \eqref{assum-01}, and
the definition of $\|\cdot\|_{X^{\beta}}$, we conclude that
\begin{align}\label{j=0}
\lf\|I_\alpha (a)\mathbf{1}_{2B}\r\|_{L^q(\rn)}
&\lesssim \|a\|_{L^p(\rn)}
\lesssim \frac{|B|^{\frac{1}{p}}}{\|\mathbf{1}_{B}\|_X}
\sim \frac{|B|^{\frac{1}{q}+\frac{\alpha}{n}}}{\|\mathbf{1}_{B}\|_X}
\ls\frac{|B|^{\frac{1}{q}}}{\|\mathbf{1}_{B}\|_{X}^{\beta}}
\sim\frac{|B|^{\frac{1}{q}}}{\|\mathbf{1}_{B}\|_{X^{\beta}}}.
\end{align}
Combining this, $\|I_\alpha (a)\mathbf{1}_{L_0}\|_{L^q(\rn)}\leq\|I_\alpha
(a)\mathbf{1}_{2B}\|_{L^q(\rn)}$, and $\|I_\alpha
(a)\mathbf{1}_{L_1}\|_{L^q(\rn)}\leq\|I_\alpha (a)\mathbf{1}_{2B}\|_{L^q(\rn)}$,
we find that \eqref{sizecondition} holds true when $j\in\{0,1\}$.

Now, we prove \eqref{sizecondition} for any $j\in[2,\infty)\cap\nn$.
To this end, let $x_B$ denote the center of $B$ and $r_B$ its
radius. Then, from
Definition \ref{atom}(iii), the Taylor remainder theorem,
the estimation of $J_2$ in \cite[p.\,106]{Lu},
the H\"older inequality, Definition \ref{atom}(ii),
$\frac{1}{q}=\frac{1}{p}-\frac{\alpha}{n}$,
and an argument similar to that used in the estimation of \eqref{j=0},
we deduce that, for any $j\in[2,\infty)\cap\nn$,
$x\in 2^{j}B\setminus 2^{j-1}B$, and $y\in B$,
there exists a $\widetilde{y}\in B$ such that
\begin{align}\label{es-Ia}
\lf|I_\alpha (a)(x)\r|
&=\lf|\int_{B}\frac{a(y)}{|x-y|^{n-\alpha}}\,dy\r|\\
&=\lf|\int_{B}\lf[\frac{1}{|x-y|^{n-\alpha}}
-\sum_{\{\gamma\in\zz_+^n:\ |\gamma|\leq s\}}
\frac{\partial_y^{\gamma}(\frac{1}{|x-y|^{n-\alpha}})|_{y=x_B}}
{\gamma!}(y-x_B)^{\gamma}\r]a(y)\,dy\r|\noz\\
&\sim\lf|\int_{B}\lf[\sum_{\{\gamma\in\zz_+^n:\ |\gamma|=s+1\}}
\partial_{y}^{\gamma}\lf(\frac{1}{|x-y|^{n-\alpha}}\r)\big|_{y=\wz
y}(y-x_B)^{\gamma}\r]a(y)\,dy\r|\noz\\
&\lesssim\int_{B}\frac{|y-x_B|^{s+1}|a(y)|}{|x-\wz
y|^{n+s+1-\alpha}}\,dy
\lesssim\frac{|B|^{\frac{1}{p'}}
r_B^{s+1}}{|x-x_B|^{n+s+1-\alpha}}\|a\|_{L^{p}(\rn)}\noz\\
&\ls \frac{r^{\frac{n}{p'}+s+1}}{(2^{j}r)^{n+s+1-\alpha}}
\frac{|B|^{\frac{1}{p}}}{\|\mathbf{1}_{B}\|_{X}}
\lesssim 2^{-j(n+s+1-\alpha)}r^{-\frac{n}{q}}
\frac{|B|^{\frac{1}{q}}}{\|\mathbf{1}_{B}\|_{X^{\beta}}}\noz,
\end{align}
where $\frac{1}{p}+\frac{1}{p'}=1$.
By this and $\tau\in(0,n+s+1-\alpha-\frac{n}{q}]$,
we obtain, for any $j\in[2,\infty)\cap\nn$,
\begin{align*}
\lf\|I_\alpha(a)\mathbf{1}_{L_j}\r\|_{L^q(\rn)}
&\ls 2^{-j(n+s+1-\alpha)}|L_j|^{\frac{1}{q}}r^{-\frac{n}{q}}
\frac{|B|^{\frac{1}{q}}}{\|\mathbf{1}_{B}\|_{X^{\beta}}}\\
&\lesssim 2^{-j(n+s+1-\alpha-\frac{n}{q})}
\frac{|B|^{\frac{1}{q}}}{\|\mathbf{1}_{B}\|_{X^{\beta}}}
\lesssim
2^{-j\tau}\frac{|B|^{\frac{1}{q}}}{\|\mathbf{1}_{B}\|_{X^{\beta}}},
\end{align*}
which implies that \eqref{sizecondition} holds true for any
$j\in[2,\infty)\cap\nn$.

Next, we show that $I_\alpha(a)$ satisfies Definition
\ref{def-mol}(ii),
namely, $\int_{\rn} I_\alpha(a)(x)x^\gamma\,dx=0$ for any
$\gamma\in\zz_+^n$ with $|\gamma|\le s-n+1$.
Without loss of generality, we may assume that $a$ is
supported in the ball $B:=B(\mathbf{0},r)\in\mathbb{B}(\rn)$,
where $r\in(0,\infty)$.
We first show $I_\alpha(a)(\cdot)|\cdot|^\gamma\in
L^1(\rn)$ for any
$\gamma\in\zz_+^n$ with $|\gamma|\leq s-n+1$.
Obviously, for any
$\gamma\in\zz_+^n$ with $|\gamma|\leq s-n+1$,
\begin{align}\label{J1J2}
\|I_\alpha(a)|\cdot|^\gamma\|_{L^1(\rn)}&\leq
\int_{2B}|I_\alpha(a)(x)||x|^\gamma\,dx+
\int_{\rn\setminus2B}\cdots\\
&=:\mathrm{J_1}+\mathrm{J_2}.\noz
\end{align}
We first estimate $\mathrm{J_1}$.
Indeed, using the H\"older inequality and Lemma \ref{fractional}(ii),
we conclude that
\begin{align}\label{J1}
\mathrm{J_1}\leq\|I_\alpha(a)\|_{L^q(\rn)}|2B|^{1-\frac{1}{q}}
\lesssim\|a\|_{L^p(\rn)}|2B|^{1-\frac{1}{q}}
<\infty.
\end{align}
As for $\mathrm{J_2}$, notice that
\begin{align*}
\mathrm{J_2}=\int_{\rn\setminus2B}\lf|\int_{B}\frac{a(y)}
{|x-y|^{n-\alpha}}\,dy\r||x|^\gamma\,dx.
\end{align*}
By this, the H\"older inequality,
$|\gamma|\leq s-n+1$, and $\alpha\in(0,n)$, similarly to the estimation of \eqref{es-Ia}, we obtain
\begin{align}\label{J2}
\mathrm{J_2}&\lesssim\int_{\rn\setminus
2B}\lf|\int_{B}\frac{|a(y)||y|^{s+1}}{|x|^{n-\alpha+s+1-\gamma}}\,dy\r|\,dx\\
&\lesssim\|a\|_{L^p(\rn)}|B|^{\frac{s+1}{n}+1-\frac{1}{p}}
\int_{\rn\setminus 2B}\frac{1}{|x|^{n-\alpha+s+1-|\gamma|}}\,dx
<\infty.\noz
\end{align}
Combining \eqref{J1J2}, \eqref{J1}, and \eqref{J2}, we conclude that
$I_\alpha(a)(\cdot)|\cdot|^\gamma\in L^1(\rn)$ with $|\gamma|\leq s-n+1$.
Then, repeating the proof in \cite[p.\,104]{tw80}
(see also the proof of \cite[p.\,105,
Theorem 3.1]{Lu}), we find that $I_\alpha(a)$ satisfies Definition \ref{def-mol}(ii), which completes the proof of Lemma \ref{lemma-atm}.
\end{proof}

Now, via borrowing some ideas from
the proof of \cite[Theorem 1.3]{hk2021}, we prove Theorem \ref{thm-Ia-02}.

\begin{proof}[Proof of Theorem \ref{thm-Ia-02}]
Let all the symbols be the same as in the present theorem.
We first show the sufficiency.
To this end, let $d\in(0,r_0]$ and $s\in\zz_+$ be such that
\begin{align}\label{ass-s-s}
s\geq\max\lf\{\lf\lfloor\frac{n}
{\min\{1,p_-\}}-n\r\rfloor,
\lf\lfloor\frac{n}{\min\{1,\beta
p_-\}}-1\r\rfloor,
\lf\lfloor\frac{n}{\min\{1,\beta
p_-\}}-n+\alpha\r\rfloor\r\}.
\end{align}
Moreover, by the assumption that $X$ has an absolutely continuous
quasi-norm
and \cite[Proposition 3.13]{zhyy2022}, we conclude that
$H_{\text{fin}}^{X, \infty, s, d}(\rn)\cap C(\rn)$
is dense in $H_X(\rn)$, here and thereafter, $C(\rn)$ denotes
the set of all continuous functions on $\rn$. From this and a standard density argument, we
deduce that,
to prove the present theorem, it suffices to show that,
for any $f\in  H_{\text{fin}}^{X, \infty, s, d}(\rn)\cap C(\rn)$,
\begin{align*}
\|I_\alpha (f)\|_{H_{X^\beta}(\rn)}
\lesssim\|f\|_{H_X(\rn)}.
\end{align*}

Indeed, by Definition \ref{finatom}, we conclude that, for any
$f\in  H_{\text{fin}}^{X, \infty, s, d}(\rn)\cap C(\rn)$, there exists an
$m \in \mathbb{N}$, a sequence $\{a_{j}\}_{j=1}^{m}$ of
$(X, \infty, s)$-atoms supported, respectively, in the balls
$\{B_{j}\}_{j=1}^{m}\subset\mathbb{B}(\rn)$, and
a sequence $\{\lambda_{j}\}_{j=1}^{m} \subset[0, \infty)$
such that $f=\sum_{j=1}^{m} \lambda_{j} a_{j}$ and
\begin{align}\label{finsize1}
\left\|\left\{\sum_{j=1}^{m}\left[\frac{\lambda_{j}}
{\left\|\mathbf{1}_{B_{j}}\right\|_X}\right]^{d}
\mathbf{1}_{B_{j}}\right\}^{\frac{1}{d}}\right\|_X
\lesssim\|f\|_{H_{\text {fin }}^{X, \infty, s, d}(\rn)}.
\end{align}

Moreover, notice that, for any $j\in\mathbb{N}$,
$a_j$ is also an $(X,p,s)$-atom with
$p\in([\frac{1}{\max\{1,\beta p_0\}}
+\frac{\alpha}{n}]^{-1},\frac{n}{\alpha})\cap(1,\frac{n}{\alpha})$.
Using \eqref{ass-s-s}, we find that we can choose a
$\tau\in(n[\frac{1}{\min\{1,\beta
p_{-}\}}-\frac{1}{q}],n+s+1-\alpha-\frac{n}{q}]$, where
$\frac{1}{q}:=\frac{1}{p}-\frac{\alpha}{n}$ is
such that $q\in(\max\{1,\beta p_0\},\infty)$.
By this and Lemma \ref{lemma-atm}, we conclude that, for any $j\in\nn$,
$I_\alpha(a_j)$	is an $(X^\beta,q,s-n+1,\tau)$-molecule
centered at $B_j$
up to a harmless constant multiple.
From both (i) and (ii) of Remark \ref{main-remark}, we infer that $X^\beta$, $\beta p_-$,
$\beta r_0$, $\beta p_0$, $q$, $s-n+1$, $\beta d$, and $\tau$
satisfy all the assumptions of Lemma \ref{2l1}.
Combining this with Lemma \ref{2l1}, the fact that, for any $i\in\{1,\ldots,m\}$,
$$
\left\|\left\{\sum_{j=1}^{m}
\left[\frac{\lambda_{j}^\beta}{\|\mathbf{1}_{B_{j}}\|_X}\right]^{d}
\mathbf{1}_{B_{j}}\right\}^{\frac{1}{d}}\right\|_X
\geq \lambda_i^\beta,
$$
the definition of $\|\cdot\|_{X^{\beta}}$,
$\beta\in(1,\infty)$, \eqref{finsize1}, and
\cite[Theorem 1.10]{yyy20} (see also \cite[Lemma 3.12]{zhyy2022}), we conclude
that, for any $f\in  H_{\text{fin}}^{X, \infty, s, d}(\rn)\cap C(\rn)$,
\begin{align}\label{IaHx}
\left\|I_\alpha(f)\right\|_{H_{X^\beta}(\rn)}
&\lesssim\left\|\left\{\sum_{j=1}^{m}
\left[\frac{\lambda_j}{\|\mathbf{1}_{B_j}\|_{X^\beta}}\right]^{d\beta}
\mathbf{1}_{B_j}\right\}^{\frac{1}{d\beta}}\right\|_{X^\beta}\\
&\sim\left\|\left\{\sum_{j=1}^{m}\left[\frac{\lambda_j^\beta}
{\|\mathbf{1}_{B_j}\|_X}\right]^{d}\mathbf{1}_{B_j}\right\}
^{\frac{1}{d}}\right\|_X
\left\|\left\{\sum_{j=1}^{m}\left[\frac{\lambda_j^\beta}
{\|\mathbf{1}_{B_j}\|_X}\right]^{d}\mathbf{1}_{B_j}
\right\}^{\frac{1}{d}}\right\|_X^{\frac{1-\beta}{\beta}}\noz\\
&\lesssim\left\|\left\{\sum_{j=1}^{m}\left[\frac{\lambda_j^\beta
\min\{\lambda_1^{1-\beta},\ldots,\lambda_m^{1-\beta}\}}
{\|\mathbf{1}_{B_j}\|_X}\right]^{d}\mathbf{1}_{B_j}\right\}
^{\frac{1}{d}}\right\|_X\noz\\
&\lesssim\left\|\left\{\sum_{j=1}^{m}\left[\frac{\lambda_j}
{\|\mathbf{1}_{B_j}\|_X}\right]^{d}\mathbf{1}_{B_j}\right\}
^{\frac{1}{d}}\right\|_X
\lesssim\|f\|_{H_{\text {fin }}^{X, \infty, s,
d}(\rn)}\sim\left\|f\right\|_{H_X(\rn)}.\noz
\end{align}
This finishes the proof of the sufficiency.

Now, we prove the necessity. To this end, let $a_{0}$ be an $(X,
\infty, s)$-atom,
associated with the unit ball $B_{0}:=B(\mathbf{0},1) \subset \mathbb{R}^{n}$,
such that $I_{\alpha} (a_{0})$ is continuous and not identically zero.
Thus, we find that there exists a ball
$\wz{B_{0}}:=B(\wz{x_0},\wz{r_0}) \subset B_{0}$,
with $\wz{x_0}\in\rn$ and $\wz{r_0}\in(0,\infty)$,
and a positive constant $c_{0}$ such that
$\left|I_{\alpha} (a_{0})(x)\right| \geq c_{0}$ for any $x \in
B_{0}$.
Let $B:=B(x_{B}, r_{B})$ be an arbitrary ball of $\mathbb{R}^{n}$
with $x_{B} \in \mathbb{R}^{n}$ and
$r_{B} \in(0, \infty)$. For any $x\in\rn$, let
$L_{B}(x):=x_{B}+r_{B} x$. Then $L_B$ is an affine transformation on
$\mathbb{R}^{n}$.
Now, we consider $a_{B}(\cdot):=a_{0}(L_{B}^{-1}(\cdot))$.
By a change of variables, we conclude that, for any
$x\in\wz{B}:=L_{B}(\wz{B_{0}})$,
\begin{align*}
\left|I_{\alpha} (a_{B})(x)\right|
&=\lf|\int_{\rn}\frac{a_B(y)}{|x-y|^{n-\alpha}}\,dy\r|
=|B|^{\frac{\alpha-n}{n}}\lf|\int_{\rn}\frac{a_B(y)}
{|L_{B}^{-1}(x)-L_{B}^{-1}(y)|^{n-\alpha}}\,dy\r|\\
&=|B|^{\frac{\alpha}{n}}\lf|\int_{\rn}\frac{a_0(y)}
{|L_{B}^{-1}(x)-y|^{n-\alpha}}\,dy\r|\\
&=|B|^{\frac{\alpha}{n}}\left|I_{\alpha}
(a_0)\left(L_{B}^{-1}(x)\right)\right|
\gtrsim|B|^{\frac{\alpha}{n}}.
\end{align*}
From this, the definition of $\|\cdot\|_{H_{X^\beta}(\rn)}$,
the estimate that $\mathcal{M}_N(I_\alpha(a_B))
\geq|I_\alpha(a_B)|$
almost everywhere, and Definition \ref{Debqfs}(ii),
we deduce that
\begin{align}\label{eqIa}
\left\|I_{\alpha} (a_{B})\right\|_{H_{X^\beta}(\rn)}
=\left\|\mathcal{M}_N(I_\alpha (a_B))(x)\right\|_{X^\beta}
\gtrsim\lf\|I_\alpha(a_B)\mathbf{1}_{\wz{B_0}}\r\|_{X^\beta}
\gtrsim |B|^{\frac{\alpha}
{n}}\left\|\mathbf{1}_{\wz{B_0}}\right\|_{X^\beta},
\end{align}
where $\mathcal{M}_N(I_\alpha (a_B))$ is as in \eqref{sec6e1}
with $f$ therein replaced by $I_\alpha (a_B)$.
Moreover, by the boundedness of $I_{\alpha}$ from
$H_X\left(\mathbb{R}^{n}\right)$ to
$H_{X^\beta}\left(\mathbb{R}^{n}\right)$,
we have $\left\|I_{\alpha} (a_{B})\right\|_{H_{X^\beta}(\rn)}
\lesssim\left\|a_{B}\right\|_{H_X(\rn)}$. Using this and
\eqref{eqIa}, we conclude that
\begin{align}\label{eq1}
|B|^{\frac{\alpha}{n}}\left\|\mathbf{1}_{\wz{B_0}}\right\|_{X^\beta}
\lesssim\left\|a_{B}\right\|_{H_{X}(\rn)}.
\end{align}
Let $\wz a_B:=\|\mathbf{1}_B\|_X^{-1}\|a_B\|_{L^\infty(\rn)}^{-1}a_B$.
Then we have $\|\wz
a_B\|_{L^\infty(\rn)}\leq\frac{1}{\|\mathbf{1}_B\|_X}$.
Combining this and Definition \ref{atom},
we find that $\wz a_B$ is an $(X,\infty,s)$-atom supported in $B$.
By this and \cite[Theorem 3.6]{SHYY}
(see also \cite[Lemma 3.11]{zhyy2022}), we find that
$$\|\wz
a_B\|_{H_X(\rn)}
\sim\|\wz a_B\|_{H_{\text {fin}}^{X, \infty, s,
d}(\rn)}
\lesssim\lf\|\frac{\mathbf{1}_B}{\|\mathbf{1}_B\|_X}\r\|_X\sim1.
$$
Combining this and $\|a_B\|_{L^\infty(\rn)}=
\|a_0\|_{L^\infty(\rn)}$, we conclude that
\begin{align}\label{eq2}
\|a_B\|_{H_X(\rn)}\lesssim\|\mathbf{1}_B\|_X\|a_0\|_{L^\infty(\rn)}
\lesssim\|\mathbf{1}_B\|_X.
\end{align}
From $B\subset\frac{2}{\wz r_0}\wz B$ and \eqref{key-c}, we deduce that
\begin{align*}
\lf\|\mathbf{1}_{B}\r\|_{X^\beta}\leq
\lf\|\mathbf{1}_{\frac{2}{\wz r_0}\wz B}\r\|_{X^\beta}\leq
\lf(\frac{4}{\wz r_0}\r)^{\frac{2n}{p_-}}
\lf\|\mathbf{1}_{\wz B}\r\|_{X^\beta}.
\end{align*}
Therefore, combining this, \eqref{eq1}, \eqref{eq2},
and the definition of $\|\cdot\|_{X^{\beta}}$, we have
\begin{align*}
|B|^{\frac{\alpha}{n}}
\ls\|a_{B}\|_{H_{X}(\rn)}\lf\|\mathbf{1}_{\wz{B}}\r\|_{X^\beta}^{-1}
\ls\|\mathbf{1}_{B}\|_{X}\lf\|\mathbf{1}_{B}\r\|_{X^{\beta}}^{-1}
\sim\|\mathbf{1}_{B}\|_{X}^{\frac{\beta-1}{\beta}}.
\end{align*}
This finishes the proof of the necessity, and hence of Theorem
\ref{thm-Ia-02}.
\end{proof}

Next, we weaken the assumption that $X$
has an absolutely continuous quasi-norm into a weaker assumption
which is applicable to Morrey spaces.

\begin{theorem}\label{thm-Ia-01}
Let both $X$ and $p_-$ satisfy Assumption \ref{assump1}. Let
$\beta\in(1,\fz)$, $\alpha\in(0,n)$, and $I_\alpha$ be the same as in \eqref{cla-I}. Assume that
$X$ satisfies Assumption \ref{assump2}
with both $r_0\in(0,\min\{\frac{1}{\beta},
p_-,\frac{n}{n+\alpha}\})$ and
$p_0\in(r_0,\frac{r_0}{1-r_0})$.
Then $I_\alpha$ can be extended to a bounded linear operator,
still denoted by $I_\alpha$, from $H_X(\rn)$ to $H_{X^\beta}(\rn)$,
namely,
there exists a positive constant $C$ such that, for any $f\in
H_X(\rn)$,
\begin{align*}
\lf\|I_{\alpha}(f)\r\|_{H_{X^{\beta}}(\rn)}\leq C\|f\|_{H_X(\rn)}
\end{align*}
if and only if there exists a positive constant $\wz C$ such that,
for any ball $B\in\mathbb{B}(\rn)$,
\begin{align*}
|B|^{\frac{\alpha}{n}}\leq \wz C
\|\mathbf{1}_B\|_X^{\frac{\beta-1}{\beta}}.
\end{align*}
\end{theorem}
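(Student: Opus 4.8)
The plan is to keep the skeleton of the proof of Theorem \ref{thm-Ia-02}, whose necessity uses only the finite atomic norm equivalence together with Assumptions \ref{assump1} and \ref{assump2}, and therefore transfers verbatim: I would dilate a fixed $(X,\infty,s)$-atom $a_0$ (with $I_\alpha a_0$ continuous and nonvanishing on a subball) to an atom $a_B$ on an arbitrary ball $B$, and then combine $\|I_\alpha(a_B)\|_{H_{X^\beta}(\rn)}\gtrsim|B|^{\alpha/n}\|\mathbf{1}_{\wz{B_0}}\|_{X^\beta}$ with $\|a_B\|_{H_X(\rn)}\ls\|\mathbf{1}_B\|_X$ and \eqref{key-c} to deduce the ball condition. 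Since this argument is applied only to a single (finite) atom, it never invokes absolute continuity, so the necessity is unchanged.

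For the sufficiency, the whole point is to bypass the density of $H_{\mathrm{fin}}^{X,\infty,s,d}(\rn)\cap C(\rn)$ in $H_X(\rn)$, which absolute continuity supplied in Theorem \ref{thm-Ia-02}. Given $f\in H_X(\rn)$, I would take the infinite atomic decomposition $f=\sum_{j\in\nn}\lambda_j a_j$ from Lemma \ref{atom-shyy}, with $(X,\infty,s)$-atoms $a_j$ supported in $B_j$ and atomic quasi-norm $\ls\|f\|_{H_X(\rn)}$. Each $a_j$ is in particular an $(X,p,s)$-atom for a $p\in(1,\frac n\alpha)$ chosen so that the Sobolev conjugate $q$, with $\frac1q=\frac1p-\frac\alpha n$, lies in $(\max\{1,\beta p_0\},\fz)$ together with an admissible decay $\tau$; this is exactly where the sharpened windows $r_0<\frac{n}{n+\alpha}$ and $p_0<\frac{r_0}{1-r_0}$ enter, guaranteeing that such $p$, $q$, and $\tau$ can be selected simultaneously. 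By Lemma \ref{lemma-atm} each $I_\alpha(a_j)$ is then a constant multiple of an $(X^\beta,q,s-n+1,\tau)$-molecule centered at $B_j$, and by parts (i)--(ii) of Remark \ref{main-remark} the shifted indices $\beta p_-,\beta r_0,\beta p_0,\beta d,q,\tau$ satisfy the hypotheses of Lemma \ref{2l1}.

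The decisive---and, I expect, hardest---step is to define $I_\alpha(f)$ as $g:=\sum_{j\in\nn}\lambda_j I_\alpha(a_j)$ and to upgrade the finite-sum estimate \eqref{IaHx} to this infinite series without norm density. The key is to use the molecular characterization as an \emph{if} statement: Lemma \ref{2l1} yields $g\in H_{X^\beta}(\rn)$ with $\|g\|_{H_{X^\beta}(\rn)}$ controlled by the molecular coefficient quasi-norm, and that quasi-norm is in turn bounded by $\|f\|_{H_X(\rn)}$ through the same H\"older-type splitting across the $\beta$-convexification carried out in \eqref{IaHx}, using only Assumptions \ref{assump1}--\ref{assump2} for $X^\beta$ (never absolute continuity). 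What must be proved separately is that the series defining $g$ converges in $\mathcal{S}'(\rn)$ and that $g$ is independent of the decomposition and agrees with $I_\alpha$ on the finite atomic space. This is precisely where $r_0<\frac{n}{n+\alpha}$ and $p_0<\frac{r_0}{1-r_0}$ do the work: they force $I_\alpha$ to act between building-block Lebesgue spaces with both exponents in $(1,\fz)$ (Lemma \ref{fractional}), which, through the self-adjointness pairing $\langle I_\alpha(a_j),\phi\rangle=\langle a_j,I_\alpha\phi\rangle$ and the molecular size bounds of Lemma \ref{lemma-atm}, furnishes a summable estimate for $\sum_j\lambda_j|\langle a_j,I_\alpha\phi\rangle|$ for each $\phi\in\mathcal{S}(\rn)$. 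Once this $\mathcal{S}'$-convergence is established, combining it with the coefficient quasi-norm bound gives $\|I_\alpha(f)\|_{H_{X^\beta}(\rn)}\ls\|f\|_{H_X(\rn)}$, completing the sufficiency.
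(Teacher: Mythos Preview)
Your skeleton for the sufficiency (infinite atomic decomposition via Lemma~\ref{atom-shyy}, Lemma~\ref{lemma-atm} to turn $I_\alpha(a_j)$ into $(X^\beta,q,s-n+1,\tau)$-molecules, then Lemma~\ref{2l1}) matches the paper, and your treatment of the necessity is exactly what the paper does. The divergence is precisely at the step you flag as decisive: how to make sense of $I_\alpha(f)$ for a general $f\in H_X(\rn)$ and to identify it with $\sum_j\lambda_j I_\alpha(a_j)$ independently of the decomposition.

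Your proposed route through the self-adjointness pairing $\langle I_\alpha(a_j),\phi\rangle=\langle a_j,I_\alpha\phi\rangle$ has a genuine gap. For $\phi\in\mathcal{S}(\rn)$ one has $I_\alpha\phi\notin\mathcal{S}(\rn)$: it is smooth but decays only like $|x|^{-(n-\alpha)}$. Hence, knowing $f=\sum_j\lambda_j a_j$ in $\mathcal{S}'(\rn)$ does \emph{not} allow you to pass to $\langle f,I_\alpha\phi\rangle=\sum_j\lambda_j\langle a_j,I_\alpha\phi\rangle$, and without that identity you cannot conclude that $g$ is intrinsic to $f$. Your reading of the constraints is also off: $r_0<\frac{n}{n+\alpha}$ and $p_0<\frac{r_0}{1-r_0}$ do not produce Lebesgue exponents in $(1,\infty)$ (indeed $r_0<1$ always here).

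The paper closes this gap by a completely different device, never invoking duality. It uses Lemma~\ref{weighted-embed}: the hypothesis $p_0<\frac{r_0}{1-r_0}$ is equivalent to $r_0>1-\frac{r_0}{p_0}$, which allows a choice of $\gamma\in(0,1)$ with $r_0\gamma\in(1-\frac{r_0}{p_0},1)$, so that $X$ embeds continuously into $L^{r_0}_{w^{r_0}}(\rn)$ with $w:=[\mathcal{M}(\mathbf{1}_{B(\mathbf{0},1)})]^\gamma\in A_1(\rn)$. Via \cite[(3.10)]{wyy} this upgrades the $\mathcal{S}'$-convergence of the atomic series to convergence in $H^{r_0}_{w^{r_0}}(\rn)$. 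The constraint $r_0<\frac{n}{n+\alpha}$ then guarantees that the conjugate $\widetilde r_0$ with $\frac{1}{\widetilde r_0}=\frac{1}{r_0}-\frac{\alpha}{n}$ stays in $(0,1]$, so Lemma~\ref{I-reasonable} applies (after checking $w^{1/q_1}\in A_{p_1,q_1}$) and yields $I_\alpha(f)=\sum_j\lambda_j I_\alpha(a_j)$ in $H^{\widetilde r_0}_{w^{\widetilde r_0}}(\rn)$, hence in $\mathcal{S}'(\rn)$, with no ambiguity. After that, the molecular estimate and Lemma~\ref{2l1} run exactly as in your outline.
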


To prove this theorem, we first recall the
concept of the following weights (see, for instance,
\cite[pp.\,21 and 139]{LDY2007}).

\begin{definition}
Let $p\in[1,\infty)$ and $w$ be a nonnegative locally integrable
function on $\rn$. Then
$w$ is called an \emph{$A_{p}(\rn)$ weight}, denoted by $w\in A_p(\rn)$, if, when $p\in(1,\infty)$,
\begin{align*}
[w]_{A_{p}\left(\mathbb{R}^{n}\right)}
:=\sup _{B \subset \mathbb{R}^{n}}\frac{1}{|B|}
\lf[\int_{B}w(x)\,dx\r]\left\{\frac{1}{|B|}
\int_{B}[w(x)]^{-\frac{1}{p-1}} \,dx\right\}^{p-1}<\infty,
\end{align*}
and
\begin{align*}
[w]_{A_{1}\left(\mathbb{R}^{n}\right)}
:=\sup _{B \subset \mathbb{R}^{n}}\frac{1}{|B|}
\lf[\int_{B}w(x)\,dx\r]\left\{\mathop{\mathrm{ess\,sup}}_{x\in\rn}
[w(x)]^{-1}\right\}<\infty,
\end{align*}
where the suprema are taken over all balls $B \in\mathbb{B}(\rn)$.
Moreover, the \emph{class $A_\infty(\rn)$} is defined by setting
$$
A_\infty(\rn):=\bigcup_{p\in[1,\infty)}A_p(\rn).
$$
\end{definition}

\begin{definition}
Let $1<p<q<\infty$ and $w$ be a nonnegative locally integrable
function on $\rn$. Then $w$ is called an \emph{$A_{p,q}(\rn)$ weigh}t,
denoted by $w\in A_{p,q}(\rn)$, if
$$
[w]_{A_{p,
q}\left(\mathbb{R}^{n}\right)}:=\sup _{B
\subset
\mathbb{R}^{n}}\left\{\frac{1}{|B|}
\int_{B}[w(x)]^{q} \,d
x\right\}^{\frac{1}{q}}\left\{\frac{1}{|B|}
\int_{B}[w(x)]^{-p'} \,d
x\right\}^{\frac{1}{p}}<\infty,
$$
where $\frac{1}{p}+\frac{1}{p'}=1$.
\end{definition}

Now, we recall the definition of weighted Lebesgue spaces (see, for instance, \cite[Definition 7.1.3]{G1}).

\begin{definition}
Let $p\in(0,\infty)$ and $w\in A_\infty(\rn)$.
The \emph{weighted Lebesgue space} $L_{w}^{p}(\mathbb{R}^{n})$ is
defined to be the set of all the measurable
functions $f$ on $\rn$ such that
\begin{align*}
\lf\|f\r\|_{L^p_w(\rn)}=\lf[\int_{\rn}|f(x)|^pw(x)\,dx\r]
^{\frac{1}{p}}<\infty.
\end{align*}
\end{definition}

The following lemma is just a special case of \cite[Theorem
1.2]{ccy2013}.
In what follows, for any $p\in(0,\fz)$ and any
nonnegative locally integrable function $w\in A_\infty(\rn)$,
the \emph{weighted Hardy space} $H^p_w(\rn)$ is
defined as in Definition \ref{2d1} with $X:=L^p_{w}(\rn)$.

\begin{lemma}\label{I-reasonable}
Let $\alpha \in(0,n)$, $p,q\in(0,1]$, and $p_{1},q_{1} \in(1,
\infty)$ be
such that $\frac{1}{p}-\frac{1}{q}=\frac{\alpha}{n}
=\frac{1}{p_{1}}-\frac{1}{q_{1}}$.
Then, for any $w^{\frac{1}{q_{1}}} \in A_{p_{1}, q_{1}}
\left(\mathbb{R}^{n}\right), I_{\alpha}$
can be extended to a bounded linear operator
from $H_{w^p}^{p}\left(\mathbb{R}^{n}\right)$
to $H_{w^q}^{q}\left(\mathbb{R}^{n}\right)$
and, moreover, there exists a positive constant $C$,
independent of $w$ itself but depending on $[w]_{A_{p,q}(\rn)}$, such that, for any
$f\in H_{w^p}^{p}\left(\mathbb{R}^{n}\right)$,
$$\lf\|I_\alpha(f)\r\|_{H_{w^q}^{q}\left(\mathbb{R}^{n}\right)}
\leq C\lf\|f\r\|_{H_{w^p}^{p}\left(\mathbb{R}^{n}\right)}.
$$
\end{lemma}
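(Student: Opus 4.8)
The plan is to deduce Lemma \ref{I-reasonable} from the general weighted Orlicz--Hardy estimate \cite[Theorem 1.2]{ccy2013} by specializing the governing Orlicz (Young) functions to power functions. Recall that \cite{ccy2013} establishes the boundedness of $I_\alpha$ from one weighted Orlicz--Hardy space to another under a condition relating the lower and upper dilation indices of the two Young functions to the fractional order $\alpha$, together with a Muckenhoupt-type admissibility condition on the weight. First I would take the source Young function to be $t\mapsto t^{p}$ and the target one to be $t\mapsto t^{q}$, with $p,q\in(0,1]$. For a power function the lower and upper dilation indices coincide (with $p$, resp.\ $q$), so the index condition of \cite[Theorem 1.2]{ccy2013} collapses exactly to the single relation $\frac{1}{p}-\frac{1}{q}=\frac{\alpha}{n}$, while, directly from the definition of weighted Orlicz spaces and from Definition \ref{2d1}, the associated weighted Orlicz--Hardy spaces reduce to the weighted Hardy spaces $H^{p}_{w^{p}}(\rn)$ and $H^{q}_{w^{q}}(\rn)$.

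The second step is to verify that, under this specialization, the Muckenhoupt admissibility condition and the auxiliary exponents $p_{1},q_{1}\in(1,\fz)$ appearing in \cite[Theorem 1.2]{ccy2013} reduce precisely to the hypothesis $w^{1/q_{1}}\in A_{p_{1},q_{1}}(\rn)$ with $\frac{1}{p_{1}}-\frac{1}{q_{1}}=\frac{\alpha}{n}$ imposed here. The role of $p_{1},q_{1}$ is exactly to supply a pair of exponents above $1$ carrying the correct fractional shift, at which the Muckenhoupt--Wheeden weighted norm inequality for $I_\alpha$ is available, since $p,q\le 1$ are themselves too small to define an $A_{p,q}$ class. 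Tracking the proof in \cite{ccy2013} then yields that the resulting operator norm is controlled quantitatively by the weight characteristic $[w^{1/q_{1}}]_{A_{p_{1},q_{1}}(\rn)}$ (equivalently the quantity denoted $[w]_{A_{p,q}(\rn)}$ in the statement), and is otherwise independent of $w$.

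I expect the only genuine obstacle to be the bookkeeping in this second step: one must match the weight normalizations $w^{p}$ and $w^{q}$ used for the two Hardy spaces against the weight conventions of the Orlicz--Hardy formulation in \cite{ccy2013}, and confirm that the Muckenhoupt class used there specializes to $A_{p_{1},q_{1}}$ for power Young functions. As an independent cross-check, and a fallback if the two conventions do not align verbatim, one can argue directly: decompose $f\in H^{p}_{w^{p}}(\rn)$ into weighted atoms, show that $I_\alpha$ sends each such atom to a weighted $(w^{q},q_{1},\tau)$-molecule with uniform control---using the Muckenhoupt--Wheeden weighted norm inequality for $I_\alpha$ associated with $w^{1/q_{1}}\in A_{p_{1},q_{1}}(\rn)$ for the local $L^{q_{1}}$ size bound, and the Taylor-remainder and vanishing-moment argument of Lemma \ref{lemma-atm} for the off-diagonal decay and the cancellation conditions---and then recombine via the weighted molecular characterization of $H^{q}_{w^{q}}(\rn)$.
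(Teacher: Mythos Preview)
Your proposal is correct and matches the paper's treatment: the paper simply states that Lemma \ref{I-reasonable} is a special case of \cite[Theorem 1.2]{ccy2013}, which is precisely the specialization you outline (power Young functions $t\mapsto t^p$, $t\mapsto t^q$, reducing the index and weight conditions accordingly). Your additional bookkeeping remarks and atomic/molecular fallback are more detailed than what the paper provides, but the core route is identical.
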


The following lemma is from the proof of
\cite[Lemma 4.7]{CWYZ2020}.

\begin{lemma}\label{weighted-embed}
Let $X$ be a ball quasi-Banach function
space satisfying Assumption \ref{assump2}
with $r_0\in(0,\infty)$ and $p_0\in(r_0,\infty)$.
Then, for any $\epsilon\in(1-\frac{r_0}{p_0},1)$,
$X$ embeds continuously into $L_w^{r_0}(\rn)$
with $w:=[\mathcal{M}(\mathbf{1}_{B(\mathbf{0},1)})]^\epsilon$.
\end{lemma}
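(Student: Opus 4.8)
The plan is to reduce the claimed continuous embedding to the single membership $w\in(X^{1/r_0})'$ and then to extract the latter directly from Assumption \ref{assump2}. First I would pass to the $\frac{1}{r_0}$-convexification: for any $f\in X$, set $g:=|f|^{r_0}$, so that $g$ is measurable with $|g|^{1/r_0}=|f|\in X$, whence $g\in X^{1/r_0}$ and, by Definition \ref{Debf}(i), $\|g\|_{X^{1/r_0}}=\||f|\|_X^{r_0}=\|f\|_X^{r_0}$. Since $X^{1/r_0}$ is a ball Banach function space, the Hölder-type inequality built into Definition \ref{de-X'} (namely $\|hg\|_{L^1(\rn)}\le\|h\|_{(X^{1/r_0})'}\|g\|_{X^{1/r_0}}$) applied with $h:=w$ gives
\begin{align*}
\lf\|f\r\|_{L_w^{r_0}(\rn)}^{r_0}
=\int_{\rn}|f(x)|^{r_0}w(x)\,dx
=\int_{\rn}g(x)w(x)\,dx
\le\|g\|_{X^{1/r_0}}\|w\|_{(X^{1/r_0})'}
=\|f\|_X^{r_0}\,\|w\|_{(X^{1/r_0})'}.
\end{align*}
Thus the entire lemma reduces to showing $\|w\|_{(X^{1/r_0})'}<\fz$, with $\|w\|_{(X^{1/r_0})'}^{1/r_0}$ serving as the embedding constant.

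Next I would manufacture a comparison element of $(X^{1/r_0})'$ out of the maximal inequality \eqref{ass-01}. Put $s:=(p_0/r_0)'$, so that $\frac{1}{s}=1-\frac{r_0}{p_0}$. By Remark \ref{bbf}, $(X^{1/r_0})'$ is itself a ball Banach function space, so Definition \ref{Debqfs}(iv) yields $\mathbf{1}_{B(\mathbf{0},1)}\in(X^{1/r_0})'$. Because $|\mathbf{1}_{B(\mathbf{0},1)}|^{s}=\mathbf{1}_{B(\mathbf{0},1)}$, the definition of the powered maximal operator gives the pointwise identity $\mathcal{M}^{((p_0/r_0)')}(\mathbf{1}_{B(\mathbf{0},1)})=[\mathcal{M}(\mathbf{1}_{B(\mathbf{0},1)})]^{1/s}$, and applying \eqref{ass-01} to $f:=\mathbf{1}_{B(\mathbf{0},1)}$ produces
\begin{align*}
\lf\|[\mathcal{M}(\mathbf{1}_{B(\mathbf{0},1)})]^{1/s}\r\|_{(X^{1/r_0})'}
=\lf\|\mathcal{M}^{((p_0/r_0)')}(\mathbf{1}_{B(\mathbf{0},1)})\r\|_{(X^{1/r_0})'}
\ls\lf\|\mathbf{1}_{B(\mathbf{0},1)}\r\|_{(X^{1/r_0})'}<\fz .
\end{align*}
Hence $[\mathcal{M}(\mathbf{1}_{B(\mathbf{0},1)})]^{1/s}\in(X^{1/r_0})'$.

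Finally I would pass from the exponent $\frac1s$ to the exponent $\epsilon$ by a pointwise comparison exploiting that $\mathcal{M}(\mathbf{1}_{B(\mathbf{0},1)})\le 1$ almost everywhere (averaging a characteristic function over any ball never exceeds $1$). Since the hypothesis $\epsilon\in(1-\frac{r_0}{p_0},1)$ forces $\epsilon>\frac1s$, raising a number in $[0,1]$ to the larger exponent only decreases it, so
\begin{align*}
w=[\mathcal{M}(\mathbf{1}_{B(\mathbf{0},1)})]^{\epsilon}
\le[\mathcal{M}(\mathbf{1}_{B(\mathbf{0},1)})]^{1/s}
\end{align*}
almost everywhere on $\rn$. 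Applying the monotonicity of the norm (Definition \ref{Debqfs}(ii) for the ball Banach function space $(X^{1/r_0})'$) together with the previous step gives $\|w\|_{(X^{1/r_0})'}\le\|[\mathcal{M}(\mathbf{1}_{B(\mathbf{0},1)})]^{1/s}\|_{(X^{1/r_0})'}<\fz$, which with the first display completes the proof. I expect the only genuinely delicate point to be the exponent bookkeeping—checking that $\frac1s=1-\frac{r_0}{p_0}$ and that the assumption on $\epsilon$ is precisely what orients the pointwise comparison in the direction compatible with the monotonicity of $\|\cdot\|_{(X^{1/r_0})'}$; every other ingredient (the Hölder inequality, Definition \ref{Debqfs}(iv), and the maximal bound \eqref{ass-01}) enters essentially directly.
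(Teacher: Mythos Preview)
Your proof is correct and follows precisely the natural route: reduce the embedding to $w\in(X^{1/r_0})'$ via the H\"older-type inequality for ball Banach function spaces, then obtain this membership by applying the powered maximal bound \eqref{ass-01} to $\mathbf{1}_{B(\mathbf{0},1)}$ and comparing exponents using $\mathcal{M}(\mathbf{1}_{B(\mathbf{0},1)})\le 1$. The paper does not give its own proof of this lemma but simply cites it from the proof of \cite[Lemma~4.7]{CWYZ2020}, where the argument is essentially the one you wrote.
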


Now, we prove Theorem \ref{thm-Ia-01}.

\begin{proof}[Proof of Theorem \ref{thm-Ia-01}]
Let all the symbols be the same as in the present theorem.
We first show the sufficiency. To this end, let $d\in(0,r_0]$ and $s\in\zz_+$ be
such that
\begin{align}\label{ass-s}
s\geq\max\lf\{\lf\lfloor\frac{n}
{\min\{1,p_-\}}-n\r\rfloor,
\lf\lfloor\frac{n}{\min\{1,\beta
p_-\}}-1\r\rfloor,
\lf\lfloor\frac{n}{\min\{1,\beta
p_-\}}-n+\alpha\r\rfloor\r\}.
\end{align}
Since $X$, $p_-$, $d$, and $s$
satisfy all the assumptions of Lemma
\ref{atom-shyy}, from Lemma \ref{I-reasonable}, we infer that,
for any $f\in H_X(\rn)$, there exists
a sequence $\{a_j\}_{j\in\mathbb{N}}$
of $(X,\infty,s)$-atoms supported, respectively, in the balls
$\{B_j\}_{j\in\mathbb{N}}\subset\mathbb{B}(\rn)$, and a sequence
$\{\lambda_j\}_{j\in\mathbb{N}}\subset[0,\infty)$
such that $f=\sum_{j\in\mathbb{N}}\lambda_ja_j$ in
$\mathcal{S}'(\rn)$, and
\begin{align}\label{finsize}
\left\|\left\{\sum_{j\in\mathbb{N}}\left[\frac{\lambda_{j}}
{\|\mathbf{1}_{B_{j}}\|_X}\right]^{d}
\mathbf{1}_{B_{j}}\right\}^{\frac{1}{d}}\right\|_X
\lesssim\|f\|_{H_X(\rn)}.
\end{align}

Notice that $p_0\in(r_0,\frac{r_0}{1-r_0})$ implies
$r_0>1-\frac{r_0}{p_0}$,
and hence there exists a $\gamma\in(0,1)$
such that $r_0\gamma\in(1-\frac{r_0}{p_0},1)$.
From this and Lemma \ref{weighted-embed}, we
deduce that $X$ embeds continuously
into $L_{w^{r_0}}^{r_0}(\rn)$ with
$w:=[\mathcal{M}(\mathbf{1}_{B(\mathbf{0},1)})]^\gamma$.
Then, using \cite[(3.10)]{wyy} with $Y$ therein replaced by
$L_{w^{r_0}}^{r_0}(\rn)$,
we obtain
\begin{align}\label{con-01}
f=\sum_{j\in\mathbb{N}}\lambda_ja_j
\end{align}
in $H_{w^{r_0}}^{r_0}(\rn)$.
Moreover, by \cite[Theorem 7.7\,(1)]{Duo01}, we
find $w\in A_1(\rn)$, which,
together with \cite[Proposition
7.2\,(1)]{Duo01}, further implies that $w\in
A_1(\rn)\subset A_{1+\frac{q_1}{p_1}}(\rn)$ for any $p_{1},q_{1} \in(1, \infty)$
such that
$\frac{1}{p_{1}}-\frac{1}{q_{1}}=\frac{\alpha}{n}$.
From this and \cite[Theorem 3.2.2]{LDY2007}, we deduce that
$w^{\frac{1}{q_1}}\in
A_{p_1,q_1}(\rn)$.
Then, using this and Lemma
\ref{I-reasonable}, we
conclude that $I_\alpha$
is bounded from $H_{w^{r_0}}^{r_0}(\rn)$
to $H_{w^{\wz{r}_0}}^{\wz{r}_0}(\rn)$
with $\frac{1}{\wz{r_0}}:=
\frac{1}{r_0}-\frac{\alpha}{n}$. Thus, from this and \eqref{con-01},
we deduce that
\begin{align*}
I_\alpha (f)=\sum_{j\in\mathbb{N}}\lambda_jI_\alpha(a_j)
\end{align*}
in $H_{w_{\wz{r}_0}}^{\wz{r}_0}(\rn)$.

Moreover, notice that, for any $j\in\mathbb{N}$,
$a_j$ is also an $(X,p,s)$-atom with
$p\in([\frac{1}{\max\{1,\beta p_0\}}
+\frac{\alpha}{n}]^{-1},\frac{n}{\alpha})\cap(1,\frac{n}{\alpha})$.
Using \eqref{ass-s}, we know that we can choose a
$\tau\in(n[\frac{1}{\min\{1,\beta
p_{-}\}}-\frac{1}{q}],n+s+1-\alpha-\frac{n}{q}]$, where
$\frac{1}{q}:=\frac{1}{p}-\frac{\alpha}{n}$ and hence
$q\in(\max\{1,\beta p_0\},\infty)$.
By Lemma \ref{lemma-atm}, we find that, for any $j\in\nn$,
$I_\alpha(a_j)$	is an $(X^\beta,q,s-n+1,\tau)$-molecule
centered at $B_j$
up to a harmless constant multiple.
From both (i) and (ii) of Remark \ref{main-remark},
we infer that $X^\beta$, $\beta p_-$,
$\beta r_0$, $\beta p_0$, $q$, $s-n+1$, $\beta d$, and $\tau$
satisfy all the assumptions of Lemma \ref{2l1}.
Combining this with Lemma \ref{2l1}, the estimate that, for any $i\in\nn$,
$$
\left\|\left\{\sum_{j\in\nn}
\left[\frac{\lambda_{j}^\beta}{\|\mathbf{1}_{B_{j}}\|_X}\right]^{d}
\mathbf{1}_{B_{j}}\right\}^{\frac{1}{d}}\right\|_X
\geq \lambda_i^\beta,
$$
the definition of $\|\cdot\|_{X^{\beta}}$,
$\beta\in(1,\infty)$, and \eqref{finsize},
similarly to the estimation of \eqref{IaHx}, we conclude that
\begin{align*}
\left\|I_\alpha(f)\right\|_{H_{X^\beta}(\rn)}
\lesssim\left\|\left\{\sum_{j\in\nn}\left[\frac{\lambda_j}
{\|\mathbf{1}_{B_j}\|_X}\right]^{d}\mathbf{1}_{B_j}\right\}
^{\frac{1}{d}}\right\|_X
\lesssim\left\|f\right\|_{H_X(\rn)}.
\end{align*}
This finishes the proof of the sufficiency.
The proof of the necessity is just a repetition of that of Theorem
\ref{thm-Ia-02},
which completes the proof of Theorem \ref{thm-Ia-01}.
\end{proof}

In the remainder of this section, via borrowing some ideas from \cite{dsl} and
using the extrapolation theorem for Hardy-type spaces,
we establish the boundedness of the
functional integral $I_\alpha$ from $H_X(\rn)$ to $H_Y(\rn)$.

\begin{theorem}\label{thm-R}
Let both $X$ and $Y$ be ball quasi-Banach function spaces.
Let $\alpha\in(0,n)$ and $I_{\alpha}$ be the same as in \eqref{cla-I}.
Let $0<p_0<q_0\leq 1$ be such that
$\frac{1}{q_0}=\frac{1}{p_0}-\frac{\alpha}{n}$.
Assume that $X^{\frac{1}{p_0}}$ and $Y^{\frac{1}{q_0}}$
are ball Banach function spaces, and
\begin{align}\label{thm-R-02}
\lf(Y^{\frac{1}{q_0}}\r)'=\lf(\lf(X^{\frac{1}{p_0}}\r)'\r)
^\frac{p_0}{q_0}.
\end{align}
Further assume that the Hardy--Littlewood maximal
operator $\mathcal{M}$ is bounded on $(Y^{\frac{1}{q_0}})'$.
Then $I_{\alpha}$ can be extended to a bounded linear operator,
still denoted by $I_\alpha$, from $H_X(\rn)$ to $H_{Y}(\rn)$,
namely, there exists a positive constant $C$ such that, for any $f\in
H_X(\rn)$,
\begin{align}\label{thm-R-05}
\lf\|I_{\alpha}(f)\r\|_{H_{Y}(\rn)}\leq C\|f\|_{H_X(\rn)}.
\end{align}
Moreover, the extension is unique if $X$ has an absolutely continuous
quasi-norm.
\end{theorem}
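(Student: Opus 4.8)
The plan is to bypass the atomic/molecular route of Theorems~\ref{thm-Ia-02} and \ref{thm-Ia-01} and instead feed a single weighted estimate into an off-diagonal Rubio de Francia extrapolation adapted to the Hardy spaces $H_X(\rn)$ and $H_Y(\rn)$; this is the ``extrapolation theorem'' alluded to above, whose abstract form is borrowed from \cite{dsl}. The seed inequality is Lemma~\ref{I-reasonable}: fixing $p_1,q_1\in(1,\fz)$ with $\frac{1}{p_1}-\frac{1}{q_1}=\frac{\alpha}{n}=\frac{1}{p_0}-\frac{1}{q_0}$, we have, for every weight $w$ with $w^{1/q_1}\in A_{p_1,q_1}(\rn)$,
\begin{align*}
\lf\|I_\alpha(f)\r\|_{H^{q_0}_{w^{q_0}}(\rn)}
\ls\lf\|f\r\|_{H^{p_0}_{w^{p_0}}(\rn)},
\end{align*}
the implicit constant depending only on $[w^{1/q_1}]_{A_{p_1,q_1}(\rn)}$. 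Because each such $w^{q_0}$ and $w^{p_0}$ lies in $A_\fz(\rn)$, the weighted grand-maximal-function characterization lets me rewrite this as $\|\mathcal{M}_N(I_\alpha(f))\|_{L^{q_0}_{w^{q_0}}(\rn)}\ls\|\mathcal{M}_N(f)\|_{L^{p_0}_{w^{p_0}}(\rn)}$, i.e.\ a weighted Lebesgue estimate for the pair of grand maximal functions $F:=\mathcal{M}_N(f)$ and $G:=\mathcal{M}_N(I_\alpha(f))$. This is exactly the input an off-diagonal extrapolation needs, with target exponent $q_0$ and source exponent $p_0$.

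Since $\|I_\alpha(f)\|_{H_Y(\rn)}=\|G\|_{Y}=\||G|^{q_0}\|_{Y^{1/q_0}}^{1/q_0}$ and $\|f\|_{H_X(\rn)}=\|F\|_X=\||F|^{p_0}\|_{X^{1/p_0}}^{1/p_0}$, I would next run the extrapolation on the ball Banach function space $Y^{1/q_0}$. Using its associate-space representation, write $\||G|^{q_0}\|_{Y^{1/q_0}}=\sup\int_{\rn}|G|^{q_0}h$, the supremum over nonnegative $h$ with $\|h\|_{(Y^{1/q_0})'}\le1$. For each such $h$, the boundedness of $\mathcal{M}$ on $(Y^{1/q_0})'$ legitimizes the Rubio de Francia iteration $\mathcal{R}h:=\sum_{k=0}^{\fz}(2\|\mathcal{M}\|)^{-k}\mathcal{M}^k(h)$, producing $h\le\mathcal{R}h$, $\|\mathcal{R}h\|_{(Y^{1/q_0})'}\ls\|h\|_{(Y^{1/q_0})'}$, and $\mathcal{R}h\in A_1(\rn)$. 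From $\mathcal{R}h$ one manufactures, by the off-diagonal balancing dictated by $\frac{1}{q_0}=\frac{1}{p_0}-\frac{\alpha}{n}$, a weight $w$ with $w^{1/q_1}\in A_{p_1,q_1}(\rn)$ that dominates $h$ on the target side; feeding it into the seed inequality bounds $\int|G|^{q_0}h$ by a weighted $L^{p_0}$-norm of $F$. Finally, the structural identity $(Y^{1/q_0})'=((X^{1/p_0})')^{p_0/q_0}$ together with Hölder's inequality converts that weighted norm back into $\||F|^{p_0}\|_{X^{1/p_0}}^{1/p_0}=\|f\|_{H_X(\rn)}$, closing the loop and yielding \eqref{thm-R-05}.

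For these manipulations to be meaningful I would first establish \eqref{thm-R-05} only for $f$ in a dense subclass on which $I_\alpha(f)$ is a genuine function and the above weighted Hardy norms are finite --- for instance finite linear combinations of atoms --- so that $G=\mathcal{M}_N(I_\alpha(f))$ is well defined. Once the a priori bound holds on this subclass, $I_\alpha$ extends by continuity to a bounded linear operator on all of $H_X(\rn)$. When $X$ has an absolutely continuous quasi-norm the finite atoms are dense in $H_X(\rn)$, which both guarantees the consistency of this extension with the classically defined $I_\alpha$ and forces it to be the unique such bounded extension.

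The main obstacle is the off-diagonal nature of the extrapolation. In the equal-exponent case one simply sets the target weight equal to $\mathcal{R}h$, but here $p_0\ne q_0$, so one cannot take $w^{q_0}=\mathcal{R}h$ verbatim; the correct weight must balance the powers so that simultaneously it controls $h$ on the $(Y^{1/q_0})'$-side and satisfies $w^{1/q_1}\in A_{p_1,q_1}(\rn)$ on the seed side. Verifying that the identity $(Y^{1/q_0})'=((X^{1/p_0})')^{p_0/q_0}$ is precisely what makes this balancing consistent --- rather than any estimate for an individual atom or molecule --- is the crux of the argument, and it is here that the hypotheses on $X$ and $Y$ are fully consumed.
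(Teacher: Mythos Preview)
Your approach is essentially the paper's: both run Rubio de Francia extrapolation on $(Y^{1/q_0})'$, take the weighted Hardy bound of Lemma~\ref{I-reasonable} as the seed, dualize $\|G^{q_0}\|_{Y^{1/q_0}}$ against test functions $h$, majorize $h$ by $\mathcal{R}h\in A_1(\rn)$, and close the loop with H\"older using the identity \eqref{thm-R-02}.

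Two execution points where the paper is sharper than your sketch. First, your stated obstacle that ``one cannot take $w^{q_0}=\mathcal{R}h$ verbatim'' is exactly what the paper does: it sets $w:=(\mathcal{R}h)^{1/q_0}$, so that $w^{q_0}=\mathcal{R}h$ and $w^{p_0}=(\mathcal{R}h)^{p_0/q_0}$; the $A_1$ property of $\mathcal{R}h$ already delivers the $A_{p_1,q_1}$ condition required by Lemma~\ref{I-reasonable}, and the exponent $p_0/q_0$ is precisely the one appearing in \eqref{thm-R-02}, so no extra balancing is needed. Second, the paper does not first restrict to a dense subclass. Instead, from $\|\mathcal{M}_N(f)\|_X<\infty$, H\"older's inequality and \eqref{thm-R-02} give the continuous embedding $H_X(\rn)\hookrightarrow H^{p_0}_{(\mathcal{R}h)^{p_0/q_0}}(\rn)$ for every such $h$; this makes $I_\alpha(f)$ well defined on all of $H_X(\rn)$ via the weighted Hardy space, without appealing to finite atoms. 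Density (and hence absolute continuity of the quasi-norm) is invoked only for the uniqueness claim, exactly as you say in your last paragraph.
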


To prove this theorem, we present some technical lemmas.
The following two lemmas are just \cite[Lemmas 2.6 and 2.5]{wyyz}, respectively.

\begin{lemma}\label{lem-R-04}
Every ball Banach function space X coincides with its
second associate space $X''$, namely, $f\in X$ if and only if $f\in
X''$ and, moreover, for any $f\in X$,
$\|f\|_X=\|f\|_{X''}$.
\end{lemma}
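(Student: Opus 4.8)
The plan is to prove the full statement as a two-sided norm identity $\|f\|_{X''}=\|f\|_X$ from which the set equality $X=X''$ follows; note first that, by Remark \ref{bbf} applied twice, $X'$ and hence $X''=(X')'$ are again ball Banach function spaces, so all of Definition \ref{Debqfs}(i)--(vi) are available for $X''$ as well. The easy half is the generalized H\"older inequality $\|fg\|_{L^1(\rn)}\le\|f\|_X\|g\|_{X'}$, which is immediate from Definition \ref{de-X'} by homogeneity: for any $g\in X'$ with $\|g\|_{X'}\le1$ one has $\|fg\|_{L^1(\rn)}\le\|f\|_X$, and taking the supremum over such $g$ gives $\|f\|_{X''}\le\|f\|_X$ for every $f$. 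In particular this already yields the inclusion $X\subset X''$ together with one of the two norm inequalities.

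The substance is the reverse inequality $\|f\|_X\le\|f\|_{X''}$ and the inclusion $X''\subset X$. Since by Definition \ref{Debqfs}(ii) every norm depends only on $|f|$, I would reduce to $f\ge0$. The key intermediate statement is the duality formula $\|f\|_X=\sup\{\int_{\rn}fg:\ g\in X',\ \|g\|_{X'}\le1\}$, whose right-hand side is by definition exactly $\|f\|_{X''}$; I would first establish it for $f$ that are bounded with support of finite measure (so that $f\in X$ by Definition \ref{Debqfs}(ii) and (iv)). Granting this, I would pass to a general $f\in X''$ via the truncations $f_k:=\min\{f,k\}\mathbf{1}_{B(\mathbf{0},k)}$, which satisfy $0\le f_k\uparrow f$ and each $f_k\in X$; applying the duality formula to each $f_k$ gives $\|f_k\|_X=\|f_k\|_{X''}\le\|f\|_{X''}<\infty$, and then the Fatou property Definition \ref{Debqfs}(iii) yields $\|f\|_X=\lim_{k\to\infty}\|f_k\|_X\le\|f\|_{X''}$, whence $f\in X$ and the norms agree.

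The heart of the matter, and the step I expect to be the main obstacle, is proving the nontrivial direction $\|f\|_X\le\sup\{\int_{\rn}fg:\ \|g\|_{X'}\le1\}$ of the duality formula for a fixed nonnegative $f$ that is bounded with finite-measure support $E$. Here I would argue by a Hahn--Banach separation of Lorentz--Luxemburg type: writing $\lambda:=\|f\|_X$, I would separate the point $f/\lambda$ from the open unit ball $\{h:\ \|h\|_X<1\}$ inside the space of functions supported in $E$, which embeds into $L^1(E)$ by the local-integrability condition Definition \ref{Debqfs}(vi). The resulting continuous linear functional is represented, again through Definition \ref{Debqfs}(vi), by integration against a measurable function $g$ supported in $E$, and the delicate points are exactly to verify that this representing $g$ satisfies $\|g\|_{X'}\le1$ (forced by homogeneity and the very definition of the associate norm) and that $\int_{\rn}fg\ge\lambda$, with no loss incurred in the final passage from the finite-support truncations back to $f$, which is precisely what the Fatou property guarantees.
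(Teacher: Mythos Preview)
The paper does not prove this lemma; it simply records it as \cite[Lemma~2.6]{wyyz}, which in turn traces back to the Lorentz--Luxemburg theorem in \cite[Chapter~1, Theorem~2.7]{BS88}. Your outline---H\"older for $\|f\|_{X''}\le\|f\|_X$, then truncation plus the Fatou property to reduce the reverse inequality to bounded $f$ with finite-measure support, and a separation argument in $L^1(E)$ for that case---is exactly the classical route.

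There is one point where your write-up is not quite right. You propose to separate $f/\lambda$ from the \emph{open} $X$-unit ball inside $L^1(E)$, but the set $\{h:\|h\|_X<1\}$ is in general \emph{not} open in the $L^1(E)$ topology, so the ``open convex set'' form of Hahn--Banach does not apply. The correct move is to use the \emph{closed} $X$-unit ball $\{h:\|h\|_X\le1\}\cap L^1(E)$: this is convex and, crucially, closed in $L^1(E)$ because any $L^1$-convergent sequence has an a.e.\ convergent subsequence and Definition~\ref{Debqfs}(iii) yields a Fatou-type bound $\|h\|_X\le\liminf\|h_k\|_X$. One then separates a point $f/\lambda'$ with $\lambda'<\|f\|_X$ (so $\|f/\lambda'\|_X>1$) from this closed convex set in $L^1(E)$; the separating functional lies in $(L^1(E))'=L^\infty(E)$, and Definition~\ref{Debqfs}(vi) gives $L^\infty(E)\subset X'$ with the right norm control. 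So the Fatou property is used \emph{twice}: once to get $L^1$-closedness of the ball at the separation step, and once at the end to pass from the truncations back to $f$. With that adjustment your argument goes through.
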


\begin{lemma}\label{lem-R-03}
Let $X$ be a ball Banach function space
and $X'$ its associate space. Then, for any $f\in X$ and $g\in X'$,
$fg$ is integrable and
$$
\int_{\rn}|f(x)g(x)|\,dx\leq \|f\|_X\|g\|_{X'}.
$$
\end{lemma}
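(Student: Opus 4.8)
The plan is to read the inequality off directly from the definition of the associate norm in Definition \ref{de-X'}, which is built precisely so that this generalized H\"older inequality holds almost tautologically. The only points needing care are the degenerate case $\|f\|_X=0$ and the passage from the normalized test function appearing in the supremum to an arbitrary $f\in X$; correspondingly, I would split the argument into these two cases.

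First I would dispose of the degenerate case. If $\|f\|_X=0$, then by Remark \ref{rem-ball-B}(i) we have $f=0$ almost everywhere, hence $fg=0$ almost everywhere, so $fg$ is trivially integrable and both sides of the claimed inequality vanish. It then remains to treat $f\in X$ with $\|f\|_X\in(0,\infty)$. For such $f$, set $h:=f/\|f\|_X$; by the absolute homogeneity of the quasi-norm, $h\in X$ with $\|h\|_X=1$. By the definition of $\|g\|_{X'}$ as the supremum of $\|g\phi\|_{L^1(\rn)}$ over all $\phi\in X$ with $\|\phi\|_X=1$, choosing $\phi:=h$ yields
\begin{align*}
\frac{1}{\|f\|_X}\lf\|gf\r\|_{L^1(\rn)}
=\lf\|gh\r\|_{L^1(\rn)}
\leq\|g\|_{X'}.
\end{align*}
Multiplying through by $\|f\|_X$ gives $\|fg\|_{L^1(\rn)}\leq\|f\|_X\|g\|_{X'}$, which is exactly the desired estimate. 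Finally, since $g\in X'$ forces $\|g\|_{X'}<\infty$ and $\|f\|_X<\infty$, this same bound shows $\|fg\|_{L^1(\rn)}<\infty$, so $fg$ is integrable, completing the argument.

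There is no substantive obstacle here, as the lemma is a soft consequence of the defining property of the K\"othe dual. The one point I would flag is that the supremum defining $\|g\|_{X'}$ is taken over the unit sphere $\{\phi\in X:\ \|\phi\|_X=1\}$ rather than over the unit ball; this is precisely why one must normalize $f$ to $h=f/\|f\|_X$ and then rescale, instead of trying to apply the supremum directly to an $f$ with $\|f\|_X\le1$. Since the lemma is quoted from \cite[Lemma 2.5]{wyyz}, I would keep the writeup to this few-line verification and not belabor it further.
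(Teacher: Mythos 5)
Your proof is correct. The paper itself gives no argument for this lemma, simply quoting it from \cite[Lemma 2.5]{wyyz}, and your writeup is the standard verification: the reduction of the degenerate case $\|f\|_X=0$ via Remark \ref{rem-ball-B}(i) is exactly the right way to handle it (using that a quasi-norm is absolutely homogeneous so that $h:=f/\|f\|_X$ lies on the unit sphere), and the normalize-then-rescale step correctly bridges the gap you flagged between the unit-sphere supremum in Definition \ref{de-X'} and an arbitrary $f\in X$. Nothing further is needed.
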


As a simple application of Lemma \ref{lem-R-04}, Definition \ref{de-X'}, and Lemma
\ref{lem-R-03}, we immediately obtain the following conclusion;
we omit the details.

\begin{lemma}\label{lem-R-02}
Let $X$ be a ball Banach function space with the associate space
$X'$. Then,
for any $f\in X$,
\begin{align*}
\|f\|_X=\sup_{\{g\in X':\ \|g\|_{X'}=1\}}\lf|\int_{\rn}f(x)g(x)\,dx\r|.
\end{align*}
\end{lemma}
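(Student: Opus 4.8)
The plan is to prove the two inequalities underlying the identity separately, exploiting that $X'$ is itself a ball Banach function space (Remark \ref{bbf}) and that, by Lemma \ref{lem-R-04}, $X$ coincides isometrically with its second associate space $X''$. I may assume $f\not\equiv0$, since otherwise both sides vanish by Remark \ref{rem-ball-B}(i). Write $S:=\sup_{\{g\in X':\ \|g\|_{X'}=1\}}|\int_{\rn}fg\,dx|$ for the right-hand side.

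First I would dispatch $S\le\|f\|_X$. For any $g\in X'$ with $\|g\|_{X'}=1$, Lemma \ref{lem-R-03} guarantees that $fg$ is integrable and that $|\int_{\rn}f(x)g(x)\,dx|\le\int_{\rn}|f(x)g(x)|\,dx\le\|f\|_X\|g\|_{X'}=\|f\|_X$; taking the supremum over all such $g$ gives $S\le\|f\|_X$.

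For the reverse inequality I would first rewrite the norm. By Lemma \ref{lem-R-04}, $\|f\|_X=\|f\|_{X''}$, and applying Definition \ref{de-X'} to the ball Banach function space $X'$, whose associate space is precisely $X''$, yields
\[
\|f\|_{X''}=\sup\lf\{\|fg\|_{L^1(\rn)}:\ g\in X',\ \|g\|_{X'}=1\r\}
=\sup_{\{g\in X':\ \|g\|_{X'}=1\}}\int_{\rn}|f(x)g(x)|\,dx.
\]
Thus everything reduces to dominating this supremum, phrased through $\int|fg|$, by $S$, phrased through $|\int fg|$. To do so, given $g\in X'$ with $\|g\|_{X'}=1$, I would introduce the phase-adjusted competitor $\wz g:=|g|\,\overline{\operatorname{sgn}f}$, where $\operatorname{sgn}f:=f/|f|$ on $\{x\in\rn:\ f(x)\neq0\}$ and $\operatorname{sgn}f:=0$ elsewhere. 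Since $|\wz g|=|g|\mathbf{1}_{\{f\neq0\}}\le|g|$, the monotonicity axiom Definition \ref{Debqfs}(ii), valid for $X'$, forces $\|\wz g\|_{X'}\le\|g\|_{X'}=1$; and because $f\,\overline{\operatorname{sgn}f}=|f|$ pointwise while the integrand vanishes where $f=0$,
\[
\int_{\rn}f(x)\wz g(x)\,dx=\int_{\rn}|f(x)|\,|g(x)|\,\mathbf{1}_{\{f\neq0\}}(x)\,dx=\int_{\rn}|f(x)g(x)|\,dx,
\]
which is real and nonnegative, so its modulus equals $\int_{\rn}|fg|\,dx$.

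Finally I would observe that the supremum of $h\mapsto|\int_{\rn}fh\,dx|$ over the sphere $\{\|h\|_{X'}=1\}$ equals its supremum over the ball $\{\|h\|_{X'}\le1\}$, since rescaling any nonzero competitor by $1/\|h\|_{X'}$ does not decrease the functional; hence $\wz g$, with $\|\wz g\|_{X'}\le1$, is admissible and $S\ge|\int_{\rn}f\wz g\,dx|=\int_{\rn}|fg|\,dx$. Taking the supremum over all unit-norm $g$ gives $\|f\|_X=\|f\|_{X''}\le S$, which together with the first step finishes the proof. The only genuinely non-routine point is this conversion from $\int|fg|$ to $|\int fg|$; it rests entirely on the unimodular phase correction $\overline{\operatorname{sgn}f}$ combined with the monotonicity of the $X'$-norm, and requires no further structure of $X$.
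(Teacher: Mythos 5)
Your proposal is correct and takes essentially the same route the paper intends: the paper omits the details but states that the lemma is a simple application of Lemma \ref{lem-R-04}, Definition \ref{de-X'}, and Lemma \ref{lem-R-03}, and these are exactly your ingredients (Lemma \ref{lem-R-03} for $\le$, and $\|f\|_X=\|f\|_{X''}$ together with the associate norm of $X'$ for $\ge$). The phase correction $\wz g:=|g|\,\overline{\operatorname{sgn}f}$, justified by the monotonicity axiom of Definition \ref{Debqfs}(ii) applied to $X'$, is precisely the standard detail the paper leaves to the reader, and your handling of it (including the passage from the unit sphere to the unit ball) is sound.
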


Now, we prove Theorem \ref{thm-R}.

\begin{proof}[Proof of Theorem \ref{thm-R}]
Let all the symbols be the same as in the present theorem.
Moreover, for any $h\in (Y^{\frac{1}{q_0}})'$ and $x\in\rn$, let
\begin{align*}
\mathcal{R}h(x):=\sum_{k\in\zz_+}\frac{\mathcal{M}^kh(x)}
{2^k\|\mathcal{M}\|_{(Y^{{1}/{q_0}})'\to(Y^{{1}/{q_0}})'}^k},
\end{align*}
where $\mathcal{M}^0h:=|h|$ and, for any $k\in\nn$,
$\mathcal{M}^kh:=\mathcal{M}\circ\cdots\circ \mathcal{M}$ denotes
the $k$-th iteration of $\mathcal{M}$.
Then, by the definition of $\mathcal{R}h$, the assumption of the
boundedness of $\mathcal{M}$
on $(Y^{\frac{1}{q_0}})'$, and the Levi lemma, we easily find that
$\mathcal{R}h$ has the following properties (see, for instance,
\cite{dsl}):
\begin{enumerate}
\item[(i)]
for almost every $x\in\rn$, $|h(x)|\leq \mathcal{R}h(x)$;

\item[(ii)]
$\|\mathcal{R}h\|_{(Y^{{1}/{q_0}})'}\leq
2\|h\|_{(Y^{{1}/{q_0}})'}$;

\item[(iii)]
$\mathcal{M}(\mathcal{R}h)\leq
2\|\mathcal{M}\|_{(Y^{{1}/{q_0}})'\to(Y^{{1}/{q_0}})'}
\mathcal{R}h$, namely, $\mathcal{R}h\in A_{1}(\rn)$.
\end{enumerate}

To show \eqref{thm-R-05}, we first claim
that $H_X(\rn)$ embeds continuously into
$H^{p_0}_{[\mathcal{R}h]^{p_0/q_0}}(\rn)$.
Indeed, from Lemma \ref{lem-R-03}, the above item (ii), and \eqref{thm-R-02}, we
deduce that
\begin{align}\label{thm-R-03}
\|f\|_{H^{p_0}_{[\mathcal{R}h]^{p_0/q_0}}(\rn)}
&=\lf\{\int_{\rn}\lf[\mathcal{M}_N(f)(x)\r]^{p_0}
[\mathcal{R}h(x)]^{\frac{p_0}{q_0}}\,dx\r\}^{\frac{1}{p_0}}\\
&\leq\lf\|\lf[\mathcal{M}_N(f)\r]^{p_0}\r\|_{X^{{1}/{p_0}}}^{\frac{1}{p_0}}
\lf\|(\mathcal{R}h)^{\frac{p_0}{q_0}}\r\|_{(X^{{1}/{p_0}})'}^{\frac{1}{p_0}}\noz\\
&=\|\mathcal{M}_N(f)\|_{X}
\|\mathcal{R}h\|_{[(X^{{1}/{p_0}})']^{{p_0}/{q_0}}}^\frac{1}{q_0}
=\|f\|_{H_X(\rn)}\|\mathcal{R}h\|_{(Y^{1/q_0})'}^\frac{1}{q_0}\noz\\
&\ls\|f\|_{H_X(\rn)}\|h\|_{(Y^{1/q_0})'}^\frac{1}{q_0},\noz
\end{align}
where $\mathcal{M}_N(f)$ is the same as in \eqref{sec6e1}.
This shows that the above claim holds true.
Moreover, by Lemma \ref{I-reasonable}, we conclude that $I_\alpha$
can be extended to a bounded linear operator
from $H^{p_0}_{[\mathcal{R}h]^{p_0/q_0}}(\rn)$
to $H_{\mathcal{R}h}^{q_0}(\rn)$ and hence is well defined on $H_X(\rn)$
due to the above claim.

Now, we show \eqref{thm-R-05}.
Indeed, using the definitions of both $H_Y(\rn)$ and $Y^{\frac{1}{q_0}}$,
Lemma \ref{lem-R-04}, and the definition of $(Y^{\frac{1}{q_0}})''$, we conclude that
\begin{align}\label{thm-R-06}
\|I_\alpha(f)\|_{H_Y(\rn)}
&=\lf\|\mathcal{M}_N(I_\alpha(f))\r\|_{Y}\\
&=\lf\|\lf[\mathcal{M}_N(I_\alpha(f))\r]^{q_0}\r\|_{Y^{1/q_0}}^{\frac{1}{q_0}}
=\lf\|\lf[\mathcal{M}_N(I_\alpha(f))\r]^{q_0}\r\|_{(Y^{1/q_0})''}^{\frac{1}{q_0}}\noz\\
&=\sup_{\|h\|_{(Y^{1/q_0})'}=1}\lf\{\int_{\rn}\lf[\mathcal{M}_N
(I_\alpha(f))(x)\r]^{q_0}|h(x)|\,dx\r\}^{\frac{1}{q_0}}.\noz
\end{align}
Moreover, from the above items (i) and (iii), Lemma \ref{I-reasonable}
with $w:=(\mathcal{R}h)^{\frac{1}{q_0}}$, and \eqref{thm-R-03}, we infer that
\begin{align*}
&\lf\{\int_{\rn}\lf[\mathcal{M}_N(I_\alpha(f))(x)\r]^{q_0}|h(x)|\,dx\r\}^{\frac{1}{q_0}}\\
&\quad\leq\lf\{\int_{\rn}\lf[\mathcal{M}_N(I_\alpha(f))(x)
\r]^{q_0}\mathcal{R}h(x)\,dx\r\}^{\frac{1}{q_0}}
=\|I_{\alpha}(f)\|_{H_{\mathcal{R}h}^{q_0}(\rn)}\noz\\
&\quad\ls\|f\|_{H_{(\mathcal{R}h)^{p_0/q_0}}^{p_0}(\rn)}
\ls\|f\|_{H_X(\rn)}\|h\|_{(Y^{1/q_0})'}^{\frac{1}{q_0}}.\noz
\end{align*}
By this and \eqref{thm-R-06}, we find that
\begin{align*}
\|I_{\alpha}(f)\|_{H_{Y}(\rn)}\ls \|f\|_{H_{X}(\rn)}.
\end{align*}
This shows that \eqref{thm-R-05} holds true.

To end the proof of the present theorem, we now need to show that,
if $X$ has an absolutely continuous quasi-norm, then
the extension of $I_{\alpha}$ is unique. Indeed,
using the assumption that $X$ has an absolutely continuous quasi-norm
and \cite[Proposition 3.13]{zhyy2022}, we conclude that
$H_{\text{fin}}^{X, \infty, s, d}(\rn)\cap C(\rn)$
is dense in $H_X(\rn)$, where both $s$ and $d$ are the same as in Definition
\ref{finatom}. From this, \eqref{thm-R-05},
and a standard density argument, we deduce the desired result.
This finishes the proof of Theorem \ref{thm-R}.
\end{proof}

\begin{remark}
Let all the symbols be the same as in Theorem \ref{thm-R}.
\begin{enumerate}
\item[(i)]	
We point out that \eqref{thm-R-02} was first introduced by Deng et al. \cite{dsl},
in which they used \eqref{thm-R-02} to obtain the boundedness of fractional integrals with rough kernels and their commutators on ball Banach function spaces.
	
\item[(ii)]
The extrapolation theorem on weighted Lebesgue spaces was originally
established by Rubio de Francia
in \cite{Ru1982} and has been widely used in the study of
sublinear operators on Hardy-type spaces. We refer the reader
to \cite{chy2021,Ho2020,Ho} for more studies on this.
\item[(iii)]
Let $\alpha\in(0,n)$, $p\in(0,\frac{n}{\alpha})$, and
$q\in(0,\fz)$ be such that $\frac{1}{q}=\frac{1}{p}-\frac{\alpha}{n}$.
It is easy to show that Theorem \ref{thm-R} when both
$X:=L^p(\rn)$ and $Y:=L^q(\rn)$
coincides with \cite[p.\,102,\ Theorem (4.1)]{tw80}.

\item[(iv)]
Let $t\in(0,\fz)$ and $p,q\in(0,\frac{n}{\alpha})$.
Recall that the \emph{amalgam space} $(L^p,l^q)(\rn)$ is defined
to be the set of all the
measurable functions $f$ on $\rn$ such that
$$
\|f\|_{(L^p,l^q)(\rn)}:=\lf\{\int_{\rn}\lf[\int_{B(x,t)}|f(y)|^p\,dy
\r]^{\frac{q}{p}}\,dx\r\}^{\frac{1}{q}}<\fz.
$$
Moreover, let the \emph{Hardy-amalgam space}
$(H^p,l^q)(\rn):=H_{(L^p,l^q)(\rn)}(\rn)$ be as in Definition
\ref{2d1}
with $X:=(L^p,l^q)(\rn)$. Then Theorem \ref{thm-R} with both
$X:=(L^p,l^q)(\rn)$ and $Y:=(L^r,l^s)(\rn)$
is a generalization of \cite[Theorem 7]{chy2021}, where $r,s\in(0,\fz)$ are such that $\frac{1}{p}-\frac{1}{r}=
\frac{\alpha}{n}=\frac{1}{q}-\frac{1}{s}$.

\item[(v)]
Recall that a function $\Phi:\ [0,\infty)\to[0,\infty)$
is called an \emph{Orlicz function} if it is
non-decreasing and satisfies $\Phi(0)= 0$,
$\Phi(t)>0$ whenever $t\in(0,\infty)$,
and $\lim_{t\to\infty}\Phi(t)=\infty$. Then $\Phi$ is said to be
of \emph{positive lower} (resp., \emph{positive upper}) \emph{type}
$p\in(0,\infty)$
if there exists a positive constant $C_{(p)}$,
depending on $p$, such that, for any $t\in[0,\infty)$
and $s\in(0,1)$ [resp., $s\in [1,\infty)$],
\begin{equation*}
\Phi(st)\le C_{(p)}s^p \Phi(t).
\end{equation*}
For any Orlicz function $\Phi$ with positive lower
type $p_{\Phi}^-$ and positive upper type $p_{\Phi}^+$,
the \emph{Orlicz space $L^\Phi(\rn)$} is defined
to be the set of all the measurable functions $f$ on $\rn$ such that
$$\|f\|_{L^\Phi(\rn)}:=\inf\lf\{\lambda\in(0,\infty):\
\int_{\rn}\Phi\lf(\frac{|f(x)|}{\lambda}\r)\,dx\le1\r\}<\infty.$$
Moreover, let $t,q\in(0,\infty)$.
The \emph{Orlicz-slice space} $(E_\Phi^q)_t(\rn)$
is defined to be the set of all the measurable functions $f$ on $\rn$
such that
$$
\|f\|_{(E_\Phi^q)_t(\rn)}
:=\lf\{\int_{\rn}\lf[\frac{\|f\mathbf{1}_{B(x,t)}\|_{L^\Phi(\rn)}}
{\|\mathbf{1}_{B(x,t)}\|_{L^\Phi(\rn)}}\r]^q\,dx\r\}^{\frac{1}{q}}<\infty.
$$
Furthermore, let the \emph{Orlicz-slice Hardy space}
$(HE_\Phi^q)_t(\rn):=H_{(E_\Phi^q)_t(\rn)}(\rn)$ be as in
Definition \ref{2d1}
with $X:=(E_\Phi^q)_t(\rn)$. Assume that $q\in(0,\frac{n}{\alpha})$
and $\Phi$ is of lower type $p_1\in(0,\fz)$
and of upper type $q_1\in(0,\frac{n}{\alpha})$.
From \cite[Lemma 3.1]{Ho} and its proof, we deduce that
there exists an Orlicz function $\Psi$ with lower type $p_2$
and upper type $q_2$ such that
$\frac{1}{p_1}-\frac{1}{p_2}=\frac{\alpha}{n}=\frac{1}{q_1}-\frac{1}{q_2}$
and, moreover, \eqref{thm-R-02} holds true with $X:=(E_\Phi^q)_t(\rn)$,
$Y:=(E_\Psi^r)_t(\rn)$, $p_0\in (0,\min\{p_1,q_1,q,1,\frac{n}{n+\alpha}\})$,
and $\frac{1}{q_0}:=\frac{1}{p_0}-\frac{\alpha}{n}$,
where $\frac{1}{r}:=\frac{1}{q}-\frac{\alpha}{n}$.
Thus, Theorem \ref{thm-R} with both $X:=(E_\Phi^q)_t(\rn)$ and $Y:=(E_\Psi^r)_t(\rn)$ coincides with \cite[Theorem 3.1]{Ho}.
\end{enumerate}
\end{remark}

\section{Applications\label{Appli}}

In this section, we apply our main results to
four concrete examples of ball quasi-Banach
function spaces, namely, Morrey spaces
(Subsection \ref{Morrey}), mixed-norm
Lebesgue spaces (Subsection
\ref{Mixed-Norm}), Local
Generalized Herz spaces (Subsection
\ref{H-H}), and Mixed Herz spaces
(Subsection \ref{M-H}).

\subsection{Morrey Spaces\label{Morrey}}

Recall that, due to the applications in
elliptic partial differential equations,
the Morrey space
$M_{r}^{p}\left(\mathbb{R}^{n}\right)$
with $0<r \leq p<\infty$ was introduced
by Morrey \cite{MCB1938} in 1938. From then on, there exists an
increasing interest in applications of
Morrey spaces to various areas of
analysis such as partial differential
equations, potential theory, and harmonic
analysis (see, for instance,
\cite{ADR2015,CFF1987,JHW2009,YSY2010}).

\begin{definition}\label{Def-Morrey}
Let $0<r \leq p<\infty$. The \emph{Morrey space}
$M_{r}^{p}\left(\mathbb{R}^{n}\right)$ is
defined to be the set of all the measurable
functions $f$ on $\rn$ such that
$$
\|f\|_{M_{r}^{p}\left(\mathbb{R}^{n}\right)}:=\sup
_{B \in \mathbb{B}(\rn)}|B|^{\frac{1}{p}-\frac{1}{r}}\|f\|_{L^{r}(B)}<\infty.
$$
\end{definition}

\begin{remark}\label{Rem-Morrey}
Let $0<r \leq p<\infty$. From Definition \ref{Def-Morrey}, we easily deduce
that $M_{r}^{p}(\rn)$ is a ball quasi-Banach function space.
However, it has been pointed out in \cite[p.\,86]{SHYY} that $M_{r}^{p}(\rn)$ may not be a
quasi-Banach function space.
\end{remark}

The following theorem is a corollary of Theorem
\ref{thm-Ia-01}. In what follows,
the \emph{Hardy-Morrey space}
$HM_{r}^{p}(\rn)$ is defined as in Definition
\ref{2d1} with $X:=M_{r}^{p}(\rn)$.

\begin{theorem}\label{Th-Morrey}
Let $\beta\in(1,\fz)$, $\alpha\in(0,n)$, $0<r \leq p<\frac{n}{\alpha}$, and
$I_\alpha$ be the same as in \eqref{cla-I}.
Then $I_\alpha$ can be extended to a bounded linear
operator, still denoted by $I_\alpha$, from
$HM^{p}_r(\rn)$ to $HM^{\beta p}_{\beta
r}(\rn)$, namely, there exists a positive
constant $C$ such that, for any $f\in HM^p_r(\rn)$,
$$\|I_\alpha(f)\|_{HM^{\beta p}_{\beta
r}(\rn)}
\leq C\|f\|_{HM^{p}_{r}(\rn)}
$$
if and only if
$\beta=\frac{n}{n-\alpha p}$.
\end{theorem}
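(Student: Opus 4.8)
The plan is to derive Theorem \ref{Th-Morrey} as a direct application of Theorem \ref{thm-Ia-01} with $X:=M_r^p(\rn)$. By Remark \ref{Rem-Morrey}, $M_r^p(\rn)$ is a ball quasi-Banach function space, so the first task is to check that it fulfills Assumptions \ref{assump1} and \ref{assump2} for suitable auxiliary indices. For Assumption \ref{assump1} I would take $p_-:=r$: since $(M_r^p)^{1/\rho}=M_{r/\rho}^{p/\rho}$ and the Fefferman--Stein vector-valued inequality \eqref{ass-02} is known to hold on $M_s^t(\rn)$ whenever $1<s\le t<\fz$ (see \cite{SHYY}), it holds on $X^{1/\rho}$ for every $\rho\in(0,r)$. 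For Assumption \ref{assump2} I would choose $r_0\in(0,\min\{\frac1\beta,r,\frac n{n+\alpha}\})$, so that $X^{1/r_0}=M_{r/r_0}^{p/r_0}$ is a ball Banach function space, and then pick $p_0\in(r_0,\frac{r_0}{1-r_0})$; the required boundedness of the powered maximal operator $\mathcal M^{((p_0/r_0)')}$ on the associate (block) space $(M_{r/r_0}^{p/r_0})'$ again follows from the Morrey-space computations in \cite{SHYY}. Since $\alpha p<n$ forces $\frac1\beta,\frac n{n+\alpha}>0$ and $\frac{r_0}{1-r_0}>r_0$, this window is nonempty for \emph{every} $\beta\in(1,\fz)$, so Theorem \ref{thm-Ia-01} is applicable for each such $\beta$.

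Next I would carry out the two explicit Morrey-space computations that translate the conclusion of Theorem \ref{thm-Ia-01} into the present statement. The $\beta$-convexification is identified by
\begin{align*}
\|f\|_{(M_r^p)^\beta}=\lf\||f|^\beta\r\|_{M_r^p}^{1/\beta}
=\lf[\sup_{B'\in\mathbb B(\rn)}|B'|^{\frac1p-\frac1r}
\lf(\int_{B'}|f|^{\beta r}\r)^{\frac1r}\r]^{\frac1\beta}
=\|f\|_{M_{\beta r}^{\beta p}},
\end{align*}
so that $(M_r^p)^\beta=M_{\beta r}^{\beta p}$ and hence $H_{X^\beta}(\rn)=HM_{\beta r}^{\beta p}(\rn)$, which matches the target space in the theorem. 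For the indicator norm, a short estimate over the defining supremum (splitting into $B'\subseteq B$ and $|B'|>|B|$, and using $\frac1p-\frac1r\le0$) gives, for every ball $B\in\mathbb B(\rn)$,
\begin{align*}
\|\mathbf 1_B\|_{M_r^p}
=\sup_{B'\in\mathbb B(\rn)}|B'|^{\frac1p-\frac1r}|B'\cap B|^{\frac1r}
=|B|^{\frac1p}.
\end{align*}

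With these in hand, the abstract criterion $|B|^{\alpha/n}\le\wz C\,\|\mathbf 1_B\|_{M_r^p}^{(\beta-1)/\beta}$ of Theorem \ref{thm-Ia-01} becomes
\begin{align*}
|B|^{\frac\alpha n}\le\wz C\,|B|^{\frac{\beta-1}{\beta p}}
\qquad\text{for all }B\in\mathbb B(\rn).
\end{align*}
Since $|B|$ ranges over all of $(0,\fz)$, this uniform bound holds if and only if the two exponents coincide, that is, $\frac\alpha n=\frac{\beta-1}{\beta p}$, which rearranges to $\beta=\frac n{n-\alpha p}$. Combining this equivalence with the iff of Theorem \ref{thm-Ia-01} yields both directions of Theorem \ref{Th-Morrey} at once.

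The genuinely delicate part is not this exponent bookkeeping but the verification of Assumption \ref{assump2} uniformly in $\beta$: one must confirm that, as $\beta\to\fz$ and the admissible $r_0$ is forced below $\frac1\beta$, the powered maximal operator $\mathcal M^{((p_0/r_0)')}$ remains bounded on the associate space of the convexified Morrey space for some $p_0$ in the prescribed window $(r_0,\frac{r_0}{1-r_0})$. I expect to dispatch this by quoting the Morrey-space instances of Assumptions \ref{assump1} and \ref{assump2} already established in \cite{SHYY}, after checking that the constraint $r_0<\min\{\frac1\beta,p_-,\frac n{n+\alpha}\}$ of Theorem \ref{thm-Ia-01} leaves enough room; everything else reduces to the two elementary Morrey identities displayed above.
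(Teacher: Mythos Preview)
Your proposal is correct and follows essentially the same route as the paper: verify Assumptions \ref{assump1} and \ref{assump2} for $X=M_r^p(\rn)$ with $p_-=r$, $r_0\in(0,\min\{\tfrac1\beta,r,\tfrac n{n+\alpha}\})$, $p_0\in(r_0,\tfrac{r_0}{1-r_0})$, compute $\|\mathbf 1_B\|_{M_r^p}=|B|^{1/p}$, and reduce the criterion of Theorem \ref{thm-Ia-01} to the exponent identity $\tfrac\alpha n=\tfrac{\beta-1}{\beta p}$. The paper cites \cite{tx2005,zyyw} rather than \cite{SHYY} for the two assumptions, but the content is the same; your explicit identification $(M_r^p)^\beta=M_{\beta r}^{\beta p}$ is a helpful addition that the paper leaves implicit. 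Your closing worry about ``uniformity in $\beta$'' is unnecessary: $\beta$ is fixed throughout, so you only need \emph{one} admissible pair $(r_0,p_0)$, and the window you wrote down is nonempty for each fixed $\beta\in(1,\infty)$.
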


\begin{proof}
Let all the symbols be the same as in the present
theorem. It is easy to show that $M^p_r(\rn)$ is a ball quasi-Banach space.
By \cite[Lemma 2.5]{tx2005} (see also \cite[Lemma 7.2]{zyyw}),
we conclude that
$M_r^p(\rn)$ satisfies Assumption
\ref{assump1} with $p_-:=r$.
Moreover, let $r_0\in(0,\min\{\frac{1}{\beta},r,\frac{n}{n+\alpha}\})$ and
$p_0\in(r_0,\frac{r_0}{1-r_0})$. Then, using
\cite[Lemma 7.6]{zyyw},
we find that Assumption \ref{assump2} with $X:=M_r^p(\rn)$
holds true. Thus, all the assumptions of
Theorem \ref{thm-Ia-01} with $X:=M_r^p(\rn)$ are satisfied.
Besides, it is easy to prove that, for any
$B\in\mathbb{B}(\rn)$,
$$
\|\mathbf{1}_B\|_{M_r^p(\rn)}=|B|^{\frac{1}{p}-\frac{1}{r}}
\|\mathbf{1}_B\|_{L^r(B)}=|B|^{\frac{1}{p}}.
$$
This implies that, for any ball $B\in\mathbb{B}(\rn)$,
\begin{align}\label{MorreyB}
|B|^{\frac{\alpha}{n}}\lesssim\|\mathbf{1}_B\|_
{M^{p}_{r}(\rn)}^{\frac{\beta-1}{\beta}}\
\text{if and only if}\
\beta=\frac{n}{n-\alpha p}.
\end{align}
Then, using Theorem \ref{thm-Ia-01} with $X:=M_r^p(\rn)$, we obtain the desired conclusion,
which completes the proof of
Theorem \ref{Th-Morrey}.
\end{proof}

\begin{remark}
\begin{enumerate}
\item[$\mathrm{(i)}$] Let all the symbols be the same as in Theorem \ref{Th-Morrey}.
By \cite[Lemma 7.2]{zyyw}, Remark \ref{max-re}, and the definition of $HM_r^p(\rn)$, we
find that, if $r\in(1,\infty)$, then $HM_r^p(\rn)=M_r^p(\rn)$.
In this case, Theorem \ref{Th-Morrey} coincides with \cite[Theorem
3.1]{ADR1975} (see also \cite[Corollary 4.7]{GD2020}).
Moreover, to the best of our knowledge,
Theorem \ref{Th-Morrey} is new even when $r\in(0,1]$.
\item[$\mathrm{(ii)}$] To the best of our knowledge, the $p$-convexification of dual spaces of Morrey spaces is unknown with
    $p\in(0,\infty)$, so Theorem \ref{thm-R} can not be applied to Morrey spaces.
\item[$\mathrm{(iii)}$] We point out that there exist many studies
on functional integrals on Morrey-type spaces; we refer the reader to \cite{DGNSS,HSS2016,SHS,SS2017,SST2009} for this.
\end{enumerate}
\end{remark}

Using Theorem \ref{coclassic}, we can obtain the following conclusion.

\begin{theorem}\label{co-Morrey}
Let $\beta\in(1,\fz)$, $\alpha\in(0,n)$, $1<r \leq p<\frac{n}{\alpha}$, and
$\mathcal{M}_\alpha$ be the same as in \eqref{de-Ma}.
Then $\mathcal{M}_\alpha$ is bounded from
$M^{p}_r(\rn)$ to $M^{\beta p}_{\beta
r}(\rn)$, namely, there exists a positive
constant $C$ such that, for any $f\in M^p_r(\rn)$,
$$\|\mathcal{M}_\alpha(f)\|_{M^{\beta p}_{\beta
r}(\rn)}
\leq C\|f\|_{M^{p}_{r}(\rn)}
$$
if and only if
$\beta=\frac{n}{n-\alpha p}$.
\end{theorem}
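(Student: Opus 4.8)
The plan is to deduce this directly from Theorem \ref{coclassic} applied with $X:=M_r^p(\rn)$, so the first step is to verify that all hypotheses of that theorem hold. Since $1<r\leq p<\infty$, the Morrey space $M_r^p(\rn)$ is a ball Banach function space (see, for instance, \cite[Lemma 7.2]{zyyw}). Moreover, by \cite[Lemma 2.5]{tx2005} (see also \cite[Lemma 7.2]{zyyw}), $M_r^p(\rn)$ satisfies Assumption \ref{assump1} with $p_-:=r$, and here $p_-=r\in(1,\infty)$; hence, by Remark \ref{max-re}, $M_r^p(\rn)$ satisfies Assumption \ref{max-assump}. Thus $X:=M_r^p(\rn)$ meets all the requirements of Theorem \ref{coclassic}.

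The key structural step is to identify the $\beta$-convexification of $M_r^p(\rn)$. Using Definition \ref{Debf}(i), the fact that the monotone power $(\cdot)^{1/\beta}$ commutes with the supremum over balls, and the elementary identity $\||f|^\beta\|_{L^r(B)}=\|f\|_{L^{\beta r}(B)}^\beta$, I would compute, for any $f\in\mathscr M(\rn)$,
\begin{align*}
\|f\|_{(M_r^p(\rn))^\beta}
=\left\||f|^\beta\right\|_{M_r^p(\rn)}^{1/\beta}
=\sup_{B\in\mathbb{B}(\rn)}|B|^{\frac{1}{\beta p}-\frac{1}{\beta r}}
\|f\|_{L^{\beta r}(B)}
=\|f\|_{M_{\beta r}^{\beta p}(\rn)}.
\end{align*}
This shows that $(M_r^p(\rn))^\beta=M_{\beta r}^{\beta p}(\rn)$ with equal quasi-norms (note $\beta r\leq\beta p$, so the right-hand side is a genuine Morrey space). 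Consequently, the boundedness of $\mathcal{M}_\alpha$ from $M_r^p(\rn)$ to $(M_r^p(\rn))^\beta$ furnished by Theorem \ref{coclassic} is precisely the asserted boundedness from $M_r^p(\rn)$ to $M_{\beta r}^{\beta p}(\rn)$.

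Finally, I would translate the characterizing inequality of Theorem \ref{coclassic} into the scalar condition on $\beta$. Recalling the computation $\|\mathbf{1}_B\|_{M_r^p(\rn)}=|B|^{1/p}$ already carried out in the proof of Theorem \ref{Th-Morrey}, the condition of Theorem \ref{coclassic} reads $|B|^{\frac{\alpha}{n}}\lesssim|B|^{\frac{\beta-1}{\beta p}}$ for all balls $B\in\mathbb{B}(\rn)$. Since $|B|$ ranges over all of $(0,\infty)$, letting $|B|\to\infty$ and $|B|\to0^+$ forces the two exponents of $|B|$ to coincide, that is, $\frac{\alpha}{n}=\frac{\beta-1}{\beta p}$, which is equivalent to $\beta=\frac{n}{n-\alpha p}$; this is exactly the equivalence recorded in \eqref{MorreyB}. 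Combining the three steps with Theorem \ref{coclassic} yields the desired result. I do not expect any genuine obstacle here: the only points requiring care are the verification that $M_r^p(\rn)$ fits the ball Banach function space framework and satisfies Assumption \ref{max-assump} (which is standard precisely because $r>1$) and the exact identification of the $\beta$-convexification, both of which are routine once the explicit value of $\|\mathbf{1}_B\|_{M_r^p(\rn)}$ is at hand.
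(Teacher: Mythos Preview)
Your proposal is correct and follows essentially the same approach as the paper's proof: verify Assumption \ref{assump1} with $p_-=r$ via \cite[Lemma 2.5]{tx2005}, invoke Remark \ref{max-re} to obtain Assumption \ref{max-assump}, and then apply Theorem \ref{coclassic} together with the equivalence \eqref{MorreyB}. You simply make explicit two steps the paper leaves implicit here (but uses elsewhere), namely the identification $(M_r^p(\rn))^\beta=M_{\beta r}^{\beta p}(\rn)$ and the derivation of $\beta=\frac{n}{n-\alpha p}$ from the homogeneity of $\|\mathbf 1_B\|_{M_r^p(\rn)}=|B|^{1/p}$.
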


\begin{proof}
Let all the symbols be the same as in the present theorem.
By \cite[Lemma 2.5]{tx2005} (see also \cite[Lemma 7.2]{zyyw}),
we conclude that
$M_r^p(\rn)$ satisfies Assumption
\ref{assump1} with $p_-:=r\in(1,\infty)$.
From this and Remark \ref{max-re}, we infer that $M^{p}_{r}(\rn)$ satisfies
Assumption \ref{max-assump}.
Thus, all the assumptions of Theorem \ref{coclassic}
with $X:=M^{p}_{r}(\rn)$
are satisfied.
Then, using \eqref{MorreyB} and Theorem \ref{coclassic} with $X:=M^{p}_{r}(\rn)$, we obtain the desired conclusion, which completes the proof of Theorem \ref{co-Morrey}.
\end{proof}

\subsection{Mixed-Norm Lebesgue Spaces\label{Mixed-Norm}}

The mixed-norm Lebesgue space
$L^{\vec{p}}\left(\mathbb{R}^{n}\right)$
was studied by Benedek and Panzone
\cite{BAP1961} in 1961, which can be
traced back to H\"ormander \cite{HL1960}.
Later on, in 1970, Lizorkin \cite{LPI1970}
further developed both the theory of
multipliers of Fourier integrals and
estimates of convolutions in the
mixed-norm Lebesgue spaces. Particularly,
in order to meet the requirements arising
in the study of the boundedness of
operators, partial differential equations,
and some other analysis subjects, the
real-variable theory of mixed-norm
function spaces, including mixed-norm
Morrey spaces, mixed-norm Hardy spaces,
mixed-norm Besov spaces, and mixed-norm
Triebel-Lizorkin spaces, has rapidly been
developed in recent years (see, for
instance,
\cite{CGN2017,CGG2017,CGN20172,GN2016,HLY2019,HLYY2019,HYacc,NT2019}).

\begin{definition}\label{mixed}
Let $\vec{p}:=(p_{1}, \ldots, p_{n})
\in(0, \infty]^{n}$. The mixed-norm
Lebesgue space
$L^{\vec{p}}\left(\mathbb{R}^{n}\right)$
is defined to be the set of all the measurable
functions $f$ on $\rn$ such that
\begin{align*}
\|f\|_{L^{\vec{p}}\left(\mathbb{R}^{n}\right)}:=\left\{\int_{\mathbb{R}}
\cdots\left[\int_{\mathbb{R}}\left|f\left(x_{1}, \ldots,
x_{n}\right)\right|^{p_{1}} \,d x_{1}\right]^{\frac{p_{2}}{p_{1}}}
\cdots \,dx_{n}\right\}^{\frac{1}{p_{n}}}<\infty
\end{align*}
with the usual modifications made when $p_i=\infty$ for some $i\in\{1,\ldots,n\}$.
\end{definition}

\begin{remark}\label{mix-r}
Let $\vec{p}\in(0, \infty)^{n}$.	
By Definition \ref{mixed}, we easily conclude that
$L^{\vec{p}}\left(\mathbb{R}^{n}\right)$
is a ball quasi-Banach space,
but, it is worth pointing out that $L^{\vec{p}}(\rn)$
may not be a quasi-Banach function space
(see, for instance, \cite[Remark 7.20]{zyyw}).
\end{remark}

The following theorem is a corollary of
Theorem \ref{thm-Ia-02}. In what follows,
the \emph{mixed-norm Hardy space}
$H^{\vec{p}}(\rn)$ is defined as in
Definition \ref{2d1} with
$X:=L^{\vec{p}}(\rn)$.

\begin{theorem}\label{Th-Mixed}
Let $\beta\in(1,\infty)$, $\alpha\in(0,n)$, $\vec{p}:=(p_{1}, \ldots, p_{n})
\in(0, \infty)^{n}$ satisfy
$\sum_{i=1}^{n}\frac{1}{p_i}\in(\alpha,\infty)$,
and $I_\alpha$ be the same as in \eqref{cla-I}.
Then $I_\alpha$ can be extended to a unique bounded linear
operator, still denoted by $I_\alpha$, from
$H^{\vec{p}}(\rn)$ to
$H^{\beta\vec{p}}(\rn)$, namely, there exists a positive
constant $C$ such that, for any $f\in H^{\vec{p}}(\rn)$,
$$\|I_\alpha(f)\|_{H^{\beta\vec{p}}(\rn)}
\leq C\|f\|_{H^{\vec{p}}(\rn)}
$$
if and only if
$\beta=\frac{\sum_{i=1}^{n}\frac{1}{p_i}}
{\sum_{i=1}^{n}\frac{1}{p_i}-\alpha}$.
\end{theorem}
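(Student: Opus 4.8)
The plan is to deduce this statement as a direct application of Theorem \ref{thm-Ia-02}, in exact parallel with the way Theorem \ref{Th-Morrey} was obtained from Theorem \ref{thm-Ia-01}; the only substantive work lies in verifying the two maximal-function assumptions for $X:=L^{\vec{p}}(\rn)$, in identifying the image space, and in computing $\|\mathbf{1}_B\|_{L^{\vec{p}}(\rn)}$. Throughout I would abbreviate $\sigma:=\sum_{i=1}^{n}\frac{1}{p_i}$, so that the hypothesis reads $\sigma\in(\alpha,\fz)$ and the asserted value is $\beta=\frac{\sigma}{\sigma-\alpha}$, which indeed lies in $(1,\fz)$ precisely because $\sigma>\alpha$.

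First I would record the structural facts. By Remark \ref{mix-r}, $L^{\vec{p}}(\rn)$ is a ball quasi-Banach function space, and it has an absolutely continuous quasi-norm (so that Theorem \ref{thm-Ia-02}, rather than the Morrey-adapted Theorem \ref{thm-Ia-01}, applies and yields uniqueness). A short fibre-wise computation from Definition \ref{Debf}(i) gives the convexification identity $(L^{\vec{p}})^{\beta}=L^{\beta\vec{p}}$, so the image space $H_{X^{\beta}}(\rn)$ is exactly $H^{\beta\vec{p}}(\rn)$. Next I would verify Assumption \ref{assump1} with $p_-:=\min\{p_1,\dots,p_n\}$: for $p\in(0,p_-)$ the space $(L^{\vec{p}})^{1/p}=L^{\vec{p}/p}$ has all exponents in $(1,\fz)$, so the Fefferman--Stein vector-valued maximal inequality on mixed-norm Lebesgue spaces yields \eqref{ass-02} (citing the relevant lemma from \cite{zyyw}, in parallel with \cite[Lemma 7.2]{zyyw} used for Morrey spaces). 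For Assumption \ref{assump2}, given the fixed $\beta$ I would choose $r_0\in(0,\min\{\frac{1}{\beta},p_-\})$ small together with a compatible $p_0\in(r_0,\fz)$ so that $(L^{\vec{p}})^{1/r_0}$ is a ball Banach function space whose associate space is again a mixed-norm space on which the powered maximal operator is bounded, once more appealing to the corresponding estimate in \cite{zyyw}.

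The heart of the matter is the quantitative comparison. Since the indicator of a cube factorizes, $\|\mathbf{1}_Q\|_{L^{\vec{p}}(\rn)}=\ell^{\sigma}$ for a cube $Q$ of side length $\ell$; comparing a ball $B=B(x_0,r)$ with its inscribed and circumscribed cubes then gives
\begin{align*}
\|\mathbf{1}_B\|_{L^{\vec{p}}(\rn)}\sim r^{\sigma}\sim|B|^{\frac{\sigma}{n}}.
\end{align*}
Substituting this into the condition of Theorem \ref{thm-Ia-02} turns it into the requirement $|B|^{\frac{\alpha}{n}}\ls|B|^{\frac{\beta-1}{\beta}\cdot\frac{\sigma}{n}}$ for every ball $B$. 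Because $|B|$ ranges over all of $(0,\fz)$, letting $|B|\to0^+$ and $|B|\to\fz$ forces the two exponents to coincide, that is, $\alpha=\frac{\beta-1}{\beta}\sigma$, which rearranges to $\beta=\frac{\sigma}{\sigma-\alpha}$.

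Finally I would assemble the pieces: for the given $\beta$, Theorem \ref{thm-Ia-02} with $X:=L^{\vec{p}}(\rn)$ asserts that $I_\alpha$ extends to a bounded linear operator from $H^{\vec{p}}(\rn)$ to $H^{\beta\vec{p}}(\rn)$ if and only if $|B|^{\frac{\alpha}{n}}\ls\|\mathbf{1}_B\|_{L^{\vec{p}}(\rn)}^{\frac{\beta-1}{\beta}}$ for all balls, and the previous paragraph identifies this condition with $\beta=\frac{\sigma}{\sigma-\alpha}$, giving both directions of the stated equivalence; uniqueness of the extension follows from the absolute continuity of the quasi-norm, again via Theorem \ref{thm-Ia-02}. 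I expect the main obstacle to be the index bookkeeping in Assumption \ref{assump2}, namely confirming that for each $\beta\in(1,\fz)$ one can genuinely select $r_0<\min\{\frac{1}{\beta},p_-\}$ with a compatible $p_0$ for which the associate-space maximal bound holds; once the relevant mixed-norm maximal estimates are cited from \cite{zyyw}, the remainder is routine.
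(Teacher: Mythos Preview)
Your proposal is correct and follows essentially the same route as the paper's proof: verify that $L^{\vec{p}}(\rn)$ satisfies Assumptions \ref{assump1} and \ref{assump2}, compute $\|\mathbf{1}_B\|_{L^{\vec{p}}(\rn)}\sim r^{\sigma}$, and then invoke Theorem \ref{thm-Ia-02}. The only refinements the paper supplies over your outline are the precise citations (namely \cite[Lemma 3.5]{HLY2019} for the vector-valued maximal inequality and \cite[p.\,304, Theorem 1.a]{BAP1961} for the dual of $L^{\vec{p}}$, rather than \cite{zyyw}) and the explicit choice $p_0\in(p_+,\infty)$ with $p_+:=\max\{p_1,\dots,p_n\}$, which resolves the index bookkeeping you flagged; the ball-indicator computation is done in the paper via a direct scaling change of variables rather than your cube comparison, but the two are equivalent.
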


\begin{proof}
Let all the symbols be the same as in the
present theorem. It is easy to show that
$L^{\vec{p}}\left(\mathbb{R}^{n}\right)$
is a ball quasi-Banach space with an absolutely continuous quasi-norm.
Let
\begin{align}\label{p-p+}
p_{-}:=\min\{p_{1},
\ldots, p_{n}\}\text{ and } p_{+}:=\max\{p_{1},
\ldots, p_{n}\}.
\end{align}
Then, by \cite[Lemma
3.5]{HLY2019}, we conclude that
$L^{\vec{p}}(\rn)$ satisfies Assumption
\ref{assump1} with $p_-$ the same as in \eqref{p-p+}.
Moreover, letting $r_0\in(0,\min\{\frac{1}{\beta},p_-\})$ and
$p_0\in(p_+,\infty)$, by both
the dual theorem of
$L^{\vec{p}}(\rn)$ (see \cite[p.\,304,
Theorem 1.a]{BAP1961}) and
\cite[Lemma 3.5]{HLY2019},
we conclude that Assumption \ref{assump2}
with $X:=L^{\vec{p}}(\rn)$
also holds true. Thus, all the assumptions of Theorem \ref{thm-Ia-02} with
$X:=L^{\vec{p}}(\rn)$ are satisfied.

Besides, using the definition of the
mixed-norm Lebesgue space, we have, for any
$B:=B(x,r)\in\mathbb{B}(\rn)$ with $x\in\rn$
and $r\in(0,\infty)$,
\begin{align*}
\left\|\mathbf{1}_{B}\right\|_{L^{\vec{p}}(\mathbb{R}^{n})}
&=\left\|\mathbf{1}_{B(\mathbf{0},r)}\right\|_{L^{\vec{p}}(\mathbb{R}^{n})}
=\left\{\int_{-r}^{r}
\cdots\left[\int_{-\sqrt{
r^{2}-\left(x_{2}^{2}+\cdots
x_{n}^{2}\right)}}^{\sqrt{
r^{2}-\left(x_{2}^{2}+\cdots
x_{n}^{2}\right)}}
\,dx_{1}\right]^{\frac{p_{2}}{p_{1}}}
\cdots
\,dx_{n}\right\}^{\frac{1}{p_{n}}}\\
&=r^{\sum_{i=1}^{n}\frac{1}{p_i}}
\left\{\int_{-1}^{1}
\cdots\left[\int_{-\sqrt{1-\left(x_{2}^{2}+\cdots
x_{n}^{2}\right)}}^{\sqrt{1-\left(x_{2}^{2}+\cdots
x_{n}^{2}\right)}}
\,dx_{1}\right]^{\frac{p_{2}}{p_{1}}} \cdots
\,dx_{n}\right\}^{\frac{1}{p_{n}}}\\
&=r^{\sum_{i=1}^{n}\frac{1}{p_i}}
\left\|\mathbf{1}_{B(\mathbf{0},1)}\right\|
_{L^{\vec{p}}(\mathbb{R}^{n})}.
\end{align*}
This implies that, for any ball $B\in\mathbb{B}(\rn)$,
\begin{align}\label{MixB}
|B|^{\frac{\alpha}{n}}\lesssim\|\mathbf{1}_{B}\|
_{L^{\vec{p}}(\mathbb{R}^{n})}^{\frac{\beta-1}{\beta}}
\ \text{if and only if}\  \beta=\frac{\sum_{i=1}^{n}\frac{1}{p_i}}
{\sum_{i=1}^{n}\frac{1}{p_i}-\alpha}.
\end{align}
Then, using Theorem
\ref{thm-Ia-02} with
$X:=L^{\vec{p}}(\rn)$, we obtain the desired conclusion, which completes the proof
of Theorem \ref{Th-Mixed}.
\end{proof}

The following theorem is a corollary of
Theorem \ref{thm-R}. In what follows, let
$\vec{p}':=(p_1',\ldots,p_n')$ for any
$\vec{p}=(p_1,\ldots,p_n)\in[1,\infty]^n$,
where $1/p_i+1/p_i'=1$ for any $i\in\{1,\ldots,n\}$.

\begin{theorem}\label{lyq-X-Y}
Let $\alpha\in(0,n)$, $I_\alpha$ be the same as in \eqref{cla-I},
$\vec{p}:=(p_{1}, \ldots, p_{n})
\in(0, \infty)^{n}$ satisfy
$\sum_{i=1}^{n}\frac{1}{p_i}\in(\alpha,\infty)$, and $\vec{q}:=(\frac{np_1}{n-\alpha p_1},
\ldots, \frac{np_n}{n-\alpha p_n})$.
Then $I_\alpha$ can be extended to a unique bounded linear
operator, still denoted by $I_\alpha$, from
$H^{\vec{p}}(\rn)$ to
$H^{\vec{q}}(\rn)$, namely, there exists a positive
constant $C$ such that, for any $f\in H^{\vec{p}}(\rn)$,
$$\|I_\alpha(f)\|_{H^{\vec{q}}(\rn)}
\leq C\|f\|_{H^{\vec{p}}(\rn)}.
$$
\end{theorem}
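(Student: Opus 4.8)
The plan is to obtain this theorem as a direct consequence of Theorem \ref{thm-R}, applied with $X:=L^{\vec{p}}(\rn)$ and $Y:=L^{\vec{q}}(\rn)$; the whole argument then amounts to checking the hypotheses of that extrapolation-type theorem. By Remark \ref{mix-r}, both $L^{\vec{p}}(\rn)$ and $L^{\vec{q}}(\rn)$ are ball quasi-Banach function spaces (here $\vec{q}\in(0,\infty)^n$ presupposes $p_i<\frac{n}{\alpha}$, i.e.\ $\frac1{p_i}>\frac{\alpha}{n}$, for each $i$), and, as already noted in the proof of Theorem \ref{Th-Mixed}, $L^{\vec{p}}(\rn)$ has an absolutely continuous quasi-norm; the latter will deliver the uniqueness of the extension once \eqref{thm-R-05} is established. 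With $p_-:=\min\{p_1,\ldots,p_n\}$ as in \eqref{p-p+}, I would fix $p_0\in(0,\min\{p_-,\frac{n}{n+\alpha}\})$ and define $q_0$ by $\frac1{q_0}:=\frac1{p_0}-\frac{\alpha}{n}$; since $\alpha\in(0,n)$ and $p_0<\frac{n}{n+\alpha}$, this yields $0<p_0<q_0<1$, so the requirement $0<p_0<q_0\le1$ holds.

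Next I would make the relevant convexifications and associate spaces explicit. From $\||f|^{1/p_0}\|_{L^{\vec{p}}(\rn)}=\|f\|_{L^{\vec{p}/p_0}(\rn)}^{1/p_0}$ one reads off $(L^{\vec{p}})^{1/p_0}=L^{\vec{p}/p_0}(\rn)$, where $\vec{p}/p_0:=(p_1/p_0,\ldots,p_n/p_0)$, and similarly $(L^{\vec{q}})^{1/q_0}=L^{\vec{q}/q_0}(\rn)$. Since $p_0<p_-$ gives $\vec{p}/p_0\in(1,\infty)^n$, the space $(L^{\vec{p}})^{1/p_0}$ is a ball Banach function space; and because $q_i=\frac{np_i}{n-\alpha p_i}$ is increasing in $p_i$, the scaling relations force $q_0<q_-:=\min\{q_1,\ldots,q_n\}$, whence $\vec{q}/q_0\in(1,\infty)^n$ and $(L^{\vec{q}})^{1/q_0}$ is likewise a ball Banach function space. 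Invoking the dual theorem for mixed-norm Lebesgue spaces (\cite[p.\,304, Theorem 1.a]{BAP1961}), namely $(L^{\vec{a}})'=L^{\vec{a}'}$ for $\vec{a}\in(1,\infty)^n$, together with the elementary identity $(L^{\vec{a}})^t=L^{t\vec{a}}(\rn)$, I would rewrite the two sides of \eqref{thm-R-02} as mixed-norm spaces: the right-hand side $((X^{1/p_0})')^{p_0/q_0}$ becomes $L^{(p_0/q_0)(\vec{p}/p_0)'}(\rn)$, and the left-hand side $(Y^{1/q_0})'$ becomes $L^{(\vec{q}/q_0)'}(\rn)$.

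Thus \eqref{thm-R-02} reduces to the componentwise exponent identity $(q_i/q_0)'=\frac{p_0}{q_0}(p_i/p_0)'$ for each $i\in\{1,\ldots,n\}$. This I would verify by a short direct computation: clearing denominators and dividing by $p_iq_ip_0q_0$ turns the identity into $(\frac1{p_0}-\frac1{q_0})+(\frac1{q_i}-\frac1{p_i})=0$, and the two parenthesized quantities equal, respectively, $\frac{\alpha}{n}$ and $-\frac{\alpha}{n}$ by the defining scaling relations $\frac1{q_0}=\frac1{p_0}-\frac{\alpha}{n}$ and $\frac1{q_i}=\frac1{p_i}-\frac{\alpha}{n}$. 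Finally, since $q_0<q_-$ forces $(\vec{q}/q_0)'\in(1,\infty)^n$, the Hardy--Littlewood maximal operator $\mathcal{M}$ is bounded on $(Y^{1/q_0})'=L^{(\vec{q}/q_0)'}(\rn)$ by the boundedness of $\mathcal{M}$ on mixed-norm Lebesgue spaces with exponents in $(1,\infty]^n$ (for instance, via \cite[Lemma 3.5]{HLY2019}). With every hypothesis of Theorem \ref{thm-R} thereby verified, \eqref{thm-R-05} follows, and the absolute continuity of the quasi-norm of $L^{\vec{p}}(\rn)$ gives the uniqueness of the extension.

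The only delicate part is the exponent bookkeeping: one must keep straight how convexification scales an exponent vector while passage to the associate space conjugates it componentwise, and one must ensure that a single choice of $p_0$ simultaneously makes both $(L^{\vec{p}})^{1/p_0}$ and $(L^{\vec{q}})^{1/q_0}$ ball Banach function spaces while keeping $q_0\le1$. The genuinely conceptual step, the verification of \eqref{thm-R-02}, is essentially free here, collapsing to the scalar scaling relations, which is exactly what makes the mixed-norm family a transparent illustration of Theorem \ref{thm-R}.
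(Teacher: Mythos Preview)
Your proposal is correct and follows essentially the same route as the paper's proof: both apply Theorem \ref{thm-R} with $X:=L^{\vec{p}}(\rn)$ and $Y:=L^{\vec{q}}(\rn)$, choose $p_0\in(0,\min\{p_-,\frac{n}{n+\alpha}\})$ with $\frac{1}{q_0}:=\frac{1}{p_0}-\frac{\alpha}{n}$, identify the convexifications and associate spaces as mixed-norm Lebesgue spaces via the duality $(L^{\vec a})'=L^{\vec a'}$, and invoke \cite[Lemma 3.5]{HLY2019} for the boundedness of $\mathcal{M}$ on $L^{(\vec{q}/q_0)'}(\rn)$. Your write-up is in fact somewhat more explicit than the paper's, spelling out the componentwise verification of \eqref{thm-R-02} and the role of absolute continuity for uniqueness.
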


\begin{proof}
Let all the symbols be the same as in the present theorem.
Let $p_0\in(0,\min\{p_-,\frac{n}{n+\alpha}\})$ with $p_-$
being the same as in \eqref{p-p+}, and
$\frac{1}{q_0}:=\frac{1}{p_0}-\frac{\alpha}{n}$.
It is easy to show that both $L^{\frac{\vec{p}}{p_0}}(\rn)$ and $L^{\frac{\vec{q}}{q_0}}(\rn)$
are ball Banach function spaces, and
$$\lf(\lf[L^{\vec{q}}(\rn)\r]^{\frac{1}{q_0}}\r)'
=L^{(\frac{\vec{q}}{q_0})'}(\rn)
=L^{\frac{p_0}{q_0}(\frac{\vec{p}}{p_0})'}(\rn)
=\lf[\lf(\lf[L^{\vec{p}}(\rn)\r]^{\frac{1}{p_0}}\r)'\r]^{\frac{p_0}{q_0}}.$$
By \cite[Lemma 3.5]{HLY2019}, we conclude that the Hardy--Littlewood maximal
operator $\mathcal{M}$ is bounded on $L^{(\frac{\vec{q}}{q_0})'}(\rn)$.
Thus, all the assumptions of Theorem
\ref{thm-R} with both $X:=L^{\vec{p}}(\rn)$ and
$Y:=L^{\vec{q}}(\rn)$ are satisfied.
Then, using Theorem \ref{thm-R} with both $X:=L^{\vec{p}}(\rn)$ and
$Y:=L^{\vec{q}}(\rn)$, we obtain the desired conclusion, which completes
the proof of Theorem \ref{lyq-X-Y}.
\end{proof}

\begin{remark}
Let all the symbols be the same as in Theorem \ref{Th-Mixed}.
Assume that $p_-\in(1,\infty)$ is the same as in \eqref{p-p+}.
Then, by \cite[Lemma 3.5]{HLY2019}, Remark \ref{max-re}, and the definition of $H^{\vec{p}}(\rn)$,
we have $H^{\vec{p}}(\rn)=L^{\vec{p}}(\rn)$.
In this case, in \cite[Lemma 3.1]{ZZ2022}, Zhang and Zhou proved that $I_\alpha$ is bounded from $L^{\vec{p}}(\rn)$ to $L^{\vec{q}}(\rn)$
if and only if
\begin{align}\label{cjy}
\alpha=\sum_{i=1}^{n}\frac{1}{p_i}
-\sum_{i=1}^{n}\frac{1}{q_i},
\end{align}
where
$\vec{p}:=(p_{1}, \ldots, p_{n}),\vec{q}:=(q_{1}, \ldots, q_{n})\in[1,\infty)^n$ satisfy
$p_i\leq q_i$, for any $i\in\{1,\ldots,n\}$, and $p_n\in(1,\infty)$.

When $n=1$ and $p_1\in(1,\frac{1}{\alpha})$,
it is easy to show that both \cite[Lemma 3.1]{ZZ2022} and Theorem
\ref{Th-Mixed} in this case coincide with Lemma \ref{fractional}(ii).

Let $n\geq2$ and $\vec{p}:=(p_1,p_2)\in[1,\infty)^2$ and
$\frac{1}{p_1}+\frac{1}{p_2}\in(\alpha,\infty)$.
Now, we claim that \cite[Lemma 3.1]{ZZ2022} in this case gives a more general
conclusion than both Theorems \ref{Th-Mixed} and \ref{lyq-X-Y} in this case.
Let $\beta:=\frac{\sum_{i=1}^{n}\frac{1}{p_i}}
{\sum_{i=1}^{n}\frac{1}{p_i}-\alpha}$,
$\vec{q}_1:=\beta\vec{p}$, and $\vec{q}_2$ be the same as in Theorem \ref{lyq-X-Y}.
Then \eqref{cjy} holds true with $\vec{q}$ replaced by $\vec{q}_1$ or $\vec{q}_2$.
Using \cite[Lemma 3.1]{ZZ2022}, we conclude that
$I_\alpha$ is bounded from $L^{\vec{p}}(\rn)$ to $L^{\vec{q_1}}(\rn)$
and also from $L^{\vec{p}}(\rn)$ to $L^{\vec{q_2}}(\rn)$.
Moreover, \cite[Lemma 3.1]{ZZ2022} can be applied to the case that $\vec{p}=(3,\frac{3}{2})$ and
$\vec{q}=(3,6)$, but Theorems \ref{Th-Mixed} and \ref{lyq-X-Y} can not.
Thus, \cite[Lemma 3.1]{ZZ2022} gives a more general conclusion than Theorems \ref{Th-Mixed} and \ref{lyq-X-Y}.

However, to the best of our knowledge,
Theorems \ref{Th-Mixed} and \ref{lyq-X-Y} are new when $p_-\in(0,1]$.
\end{remark}

Using Theorem \ref{coclassic}, we can obtain the following conclusion.

\begin{theorem}\label{co-Mixed}
Let $\beta\in(1,\infty)$, $\alpha\in(0,n)$, $\vec{p}:=(p_{1}, \ldots, p_{n})
\in(0, \infty)^{n}$ satisfy
$\sum_{i=1}^{n}\frac{1}{p_i}\in(\alpha,\infty)$,
$p_-\in(1,\infty)$ be the same as in \eqref{p-p+},
and $\mathcal{M}_\alpha$ the same as in \eqref{de-Ma}.
Then $\mathcal{M}_\alpha$ is bounded from
$L^{\vec{p}}(\rn)$ to
$L^{\beta\vec{p}}(\rn)$, namely, there exists a positive
constant $C$ such that, for any $f\in L^{\vec{p}}(\rn)$,
$$\|\mathcal{M}_\alpha(f)\|_{L^{\beta\vec{p}}(\rn)}
\leq C\|f\|_{L^{\vec{p}}(\rn)}
$$
if and only if
$\beta=\frac{\sum_{i=1}^{n}\frac{1}{p_i}}
{\sum_{i=1}^{n}\frac{1}{p_i}-\alpha}$.
\end{theorem}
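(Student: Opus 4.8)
The plan is to deduce this statement as a direct corollary of Theorem \ref{coclassic} applied with $X:=L^{\vec{p}}(\rn)$, mirroring the proof of Theorem \ref{co-Morrey}. Accordingly, I must verify the hypotheses of Theorem \ref{coclassic}: that $L^{\vec{p}}(\rn)$ is a ball Banach function space, that it satisfies Assumption \ref{max-assump}, and that the characteristic-function condition $|B|^{\alpha/n}\lesssim\|\mathbf{1}_B\|_{L^{\vec{p}}(\rn)}^{(\beta-1)/\beta}$ holds precisely when $\beta=\frac{\sum_{i=1}^n 1/p_i}{\sum_{i=1}^n 1/p_i-\alpha}$.

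First I would observe that the hypothesis $p_-\in(1,\infty)$ forces every $p_i>1$, so the mixed norm is genuinely subadditive and $L^{\vec{p}}(\rn)$ is a ball Banach function space (not merely a ball quasi-Banach function space, as in Remark \ref{mix-r}). Next, by \cite[Lemma 3.5]{HLY2019}, $L^{\vec{p}}(\rn)$ satisfies Assumption \ref{assump1} with $p_-$ as in \eqref{p-p+}; since here $p_-\in(1,\infty)$, Remark \ref{max-re} then guarantees that $L^{\vec{p}}(\rn)$ satisfies Assumption \ref{max-assump}. This dispatches all the structural hypotheses of Theorem \ref{coclassic}.

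It remains to identify the ball-characteristic condition. Here I would simply invoke the computation of $\|\mathbf{1}_B\|_{L^{\vec{p}}(\rn)}$ already performed in the proof of Theorem \ref{Th-Mixed}, which yields the equivalence \eqref{MixB}, namely that $|B|^{\alpha/n}\lesssim\|\mathbf{1}_B\|_{L^{\vec{p}}(\rn)}^{(\beta-1)/\beta}$ holds for all balls $B\in\mathbb{B}(\rn)$ if and only if $\beta=\frac{\sum_{i=1}^n 1/p_i}{\sum_{i=1}^n 1/p_i-\alpha}$. Feeding this into the biconditional of Theorem \ref{coclassic} with $X:=L^{\vec{p}}(\rn)$, and recalling that $(L^{\vec{p}})^\beta=L^{\beta\vec{p}}$ (so that the image space is $X^\beta=L^{\beta\vec{p}}(\rn)$), immediately produces both directions of the asserted boundedness of $\mathcal{M}_\alpha$.

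There is no substantive obstacle: the result is a clean specialization of Theorem \ref{coclassic}, and both ingredients—Assumption \ref{assump1} for $L^{\vec{p}}(\rn)$ and the characteristic-function estimate \eqref{MixB}—are already available. The only point warranting care is the role of the hypothesis $p_-\in(1,\infty)$, which is exactly what upgrades $L^{\vec{p}}(\rn)$ to a ball \emph{Banach} function space as required by Theorem \ref{coclassic}; this is precisely the reason the range of $\vec{p}$ here is more restrictive than in the Hardy-space statement of Theorem \ref{Th-Mixed}, where $p_-$ was allowed to be at most $1$ and the convexification route through molecular characterizations was needed instead.
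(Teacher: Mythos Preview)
Your proposal is correct and follows essentially the same approach as the paper: verify Assumption \ref{max-assump} for $L^{\vec{p}}(\rn)$ via $p_-\in(1,\infty)$ and Remark \ref{max-re}, then apply Theorem \ref{coclassic} together with the characteristic-function equivalence \eqref{MixB}. The only difference is that you spell out a few details (the ball Banach property and the identification $(L^{\vec{p}})^\beta=L^{\beta\vec{p}}$) that the paper leaves implicit.
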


\begin{proof}
Let all the symbols be the same as in the present theorem.
From both $p_-\in(1,\infty)$ and Remark \ref{max-re}, we infer that
$L^{\vec{p}}(\rn)$ satisfies Assumption \ref{max-assump}.
Thus, all the assumptions of Theorem \ref{coclassic}
with $X:=L^{\vec{p}}(\rn)$
are satisfied.
Then, using \eqref{MixB} and Theorem \ref{coclassic} with $X:=L^{\vec{p}}(\rn)$,
we obtain the desired conclusion, which completes the proof of Theorem \ref{co-Mixed}.
\end{proof}

\subsection{Local Generalized Herz Spaces\label{H-H}}

The local generalized Herz space was originally introduced recently
by Rafeiro and Samko \cite{RS2020},
which is a generalization of the classical
homogeneous Herz space and connects with the generalized Morrey type space.
Later on, Li et al. \cite{LYH2022} studied
the real-variable theory of Hardy spaces
associated with the local generalized Herz space.
Now, we recall the definitions of both the function class
$M\left(\mathbb{R}_{+}\right)$ and the
local generalized Herz space
$\dot{\mathcal{K}}_{\omega, \mathbf{0}}^{p,
q}(\mathbb{R}^{n})$
(see \cite[Definitions 2.1 and 2.2]{RS2020}
and also \cite[Definitions 1.1.1 and 1.2.1]{LYH2022}).

\begin{definition}
Let $\mathbb{R}_+:=(0,\infty)$. The \emph{function class}
$M\left(\mathbb{R}_{+}\right)$ is defined to
be the set of all the positive functions
$\omega$ on $\mathbb{R}_{+}$ such that, for
any $0<\delta<N<\infty$,
$$
0<\inf _{t \in(\delta, N)} \omega(t) \leq
\sup _{t \in(\delta, N)} \omega(t)<\infty
$$
and there exist four constants $\alpha_{0}$,
$\beta_{0}$, $\alpha_{\infty}$,
$\beta_{\infty} \in \mathbb{R}$ such that
\begin{itemize}
\item[\rm (i)] for any $t \in(0,1]$, $\omega(t)
t^{-\alpha_{0}}$ is almost increasing and
$\omega(t) t^{-\beta_{0}}$ is almost
decreasing;

\item[\rm (ii)] for any $t \in[1, \infty)$, $\omega(t)
t^{-\alpha_{\infty}}$ is almost increasing
and $\omega(t) t^{-\beta_{\infty}}$ is
almost decreasing.
\end{itemize}
\end{definition}

\begin{definition}
Let $p,q \in(0, \infty)$ and $\omega \in
M\left(\mathbb{R}_{+}\right)$.
The \emph{local generalized Herz space}
$\dot{\mathcal{K}}_{\omega, \mathbf{0}}^{p,
q}(\mathbb{R}^{n})$ is defined to
be the set of all the measurable functions
$f$ on $\mathbb{R}^{n}$ such that
$$
\|f\|_{\dot{\mathcal{K}}_{\omega, \mathbf{0}}^{p,
q}\left(\mathbb{R}^{n}\right)}:=\left\{\sum_{k \in
\mathbb{Z}}\left[\omega\left(2^{k}\right)\right]^{q}\left\|f
\mathbf{1}_{B\left(\mathbf{0}, 2^{k}\right) \setminus
B\left(\mathbf{0},
2^{k-1}\right)}\right\|_{L^{p}\left(\mathbb{R}^{n}\right)}
^{q}\right\}^{\frac{1}{q}}
$$
is finite.
\end{definition}

\begin{definition}
Let $\omega$ be a positive
function on $\mathbb{R}_{+}$. Then the
\emph{Matuszewska-Orlicz indices} $m_{0}(\omega)$,
$M_{0}(\omega)$, $m_{\infty}(\omega)$, and
$M_{\infty}(\omega)$ of $\omega$ are
defined, respectively, by setting, for any
$h \in(0, \infty)$,
$$m_{0}(\omega):=\sup _{t \in(0,1)}
\frac{\ln (\varlimsup\limits_{h \to
0^{+}} \frac{\omega(h
t)}{\omega(h)})}{\ln t},$$
$$M_{0}(\omega):=\inf _{t \in(0,1)}
\frac{\ln (
\varliminf\limits_{h\to0^{+}}
\frac{\omega(h t)}{\omega(h)})}{\ln
t},$$
$$m_{\infty}(\omega):=\sup _{t \in(1,
\infty)} \frac{\ln (\varliminf
\limits_{h \to\infty}
\frac{\omega(h t)}{\omega(h)})}{\ln
t},$$
and
$$
M_{\infty}(\omega):=\inf _{t \in(1, \infty)}
\frac{\ln (\varlimsup\limits_{h
\to \infty} \frac{\omega(h
t)}{\omega(h)})}{\ln t}.
$$
\end{definition}

\begin{remark}
From \cite[Theorem 1.2.20]{LYH2022},
we infer that $\dot{\mathcal{K}}_{\omega, \mathbf{0}}^{p,q}(\mathbb{R}^{n})$ is a
ball quasi-Banach function space with $p,q\in(0,\infty)$ and $\omega\in M(\mathbb{R}_+)$ satisfying
$m_0(\omega)\in(-\frac{n}{p},\infty)$.
However, it is worth pointing out that
$\dot{\mathcal{K}}_{\omega, \mathbf{0}}^{p,q}(\mathbb{R}^{n})$ with
$p,q \in(0, \infty)$ and $\omega \in
M\left(\mathbb{R}_{+}\right)$ may not be a quasi-Banach function space.
For instance, let $p:=1=:q$ and $\omega(t):=t^n$ for any $t\in(0,\infty)$.
In this case, $\dot{\mathcal{K}}_{\omega, \mathbf{0}}^{p,q}(\mathbb{R}^{n})
=\dot{\mathcal{K}}_{\omega, \mathbf{0}}^{1,1}(\mathbb{R}^{n})$.
Let $\vec{e}:=(1,0,\ldots,0)$ and
$$E:=\bigcup_{k\in\nn}B(3\times2^k\vec{e},2^{-k}).
$$
Then it is easy to show that $|E|<\infty$, but
$$\lf\|\mathbf{1}_E\r\|_{\dot{\mathcal{K}}_{\omega, \mathbf{0}}^{1,1}(\mathbb{R}^{n})}
=\sum_{k\in\nn}2^{nk}\lf|B\lf(3\times2^k\vec{e},2^{-k}\r)\r|
=\sum_{k\in\nn}|B(\mathbf{0},1)|=\infty.
$$
Thus, $\dot{\mathcal{K}}_{\omega, \mathbf{0}}^{1,1}(\mathbb{R}^{n})$
is not a quasi-Banach function space.
\end{remark}

The following theorem is a corollary of Theorem \ref{thm-Ia-02}. The
\emph{generalized Herz Hardy
space} $H\dot{\mathcal{K}}_{\omega,
\mathbf{0}}^{p,q}(\rn)$ is defined as in
Definition \ref{2d1} with
$X:=\dot{\mathcal{K}}_{\omega,\mathbf{0}}^{p,
q}(\mathbb{R}^{n})$ (see \cite[Definition 4.0.15]{LYH2022}).

\begin{theorem}\label{Th-LGH1}
Let $p,q\in(0,\infty)$ and $\omega\in M(\mathbb{R}_+)$ satisfy
$m_0(\omega)\in(-\frac{n}{p},\infty)$
and $m_\infty(\omega)\in(-\frac{n}{p},\infty)$.
Let $\beta\in(1,\fz)$, $\alpha\in(0,n)$,
and $I_\alpha$ be the same as in \eqref{cla-I}.
Then $I_\alpha$ can be extended to a unique bounded linear
operator, still denoted by $I_\alpha$, from $H\dot{\mathcal{K}}_{\omega,
\mathbf{0}}^{p,q}(\rn)$ to
$H\dot{\mathcal{K}}_{\omega^{1/\beta},
\mathbf{0}}^{\beta p,\beta
q}(\rn)$, namely,
there exists a positive constant $C$ such that, for any $f\in H\dot{\mathcal{K}}_{\omega,
\mathbf{0}}^{p,q}(\rn)$,
$$
\|I_\alpha(f)\|_{H\dot{\mathcal{K}}_{\omega^{1/\beta},
\mathbf{0}}^{\beta p,\beta
q}(\rn)}\leq C\|f\|_{H\dot{\mathcal{K}}_{\omega,
\mathbf{0}}^{p,q}(\rn)}
$$
if and only if there exists a positive constant
$\wz C$ such that, for any ball
$B\in\mathbb{B}(\rn)$,
\begin{align*}
|B|^{\frac{\alpha}{n}}\leq \wz C
\|\mathbf{1}_B\|_{\dot{\mathcal{K}}_{\omega,\mathbf{0}}^{p,
q}(\mathbb{R}^{n})}^{\frac{\beta-1}{\beta}}.
\end{align*}
\end{theorem}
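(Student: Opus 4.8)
The plan is to realize Theorem \ref{Th-LGH1} as a direct application of Theorem \ref{thm-Ia-02} with $X:=\dot{\mathcal{K}}_{\omega,\mathbf{0}}^{p,q}(\rn)$, so that all the work lies in checking that this $X$ satisfies the hypotheses of that theorem and in identifying its $\beta$-convexification. First I would recall from \cite{LYH2022} that, under $m_0(\omega)\in(-\frac{n}{p},\infty)$, the space $\dot{\mathcal{K}}_{\omega,\mathbf{0}}^{p,q}(\rn)$ is a ball quasi-Banach function space possessing an absolutely continuous quasi-norm; the latter property is precisely the extra requirement of Theorem \ref{thm-Ia-02} beyond Assumptions \ref{assump1} and \ref{assump2}.

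Next I would verify Assumptions \ref{assump1} and \ref{assump2}. For Assumption \ref{assump1} I would invoke the Fefferman--Stein vector-valued maximal inequality for local generalized Herz spaces established in \cite{LYH2022}, which holds on $(\dot{\mathcal{K}}_{\omega,\mathbf{0}}^{p,q})^{1/s}$ for $s$ in a range governed by $p$ and the Matuszewska--Orlicz indices; this produces a $p_-\in(0,\infty)$ as required, and here the two lower-index conditions $m_0(\omega),m_\infty(\omega)\in(-\frac{n}{p},\infty)$ guarantee the correct behavior of $\mathcal{M}$ both near the origin and at infinity. For Assumption \ref{assump2} I would choose $r_0\in(0,\min\{\frac{1}{\beta},p_-\})$ small enough that $(\dot{\mathcal{K}}_{\omega,\mathbf{0}}^{p,q})^{1/r_0}$ is a ball Banach function space, and then, using the description of the associate space of a local generalized Herz space together with the boundedness of the powered Hardy--Littlewood maximal operator thereon (both from \cite{LYH2022}), select $p_0\in(r_0,\infty)$ so that \eqref{ass-01} holds.

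The key structural identification is that the $\beta$-convexification is again a local generalized Herz space, namely
$$\lf(\dot{\mathcal{K}}_{\omega,\mathbf{0}}^{p,q}(\rn)\r)^{\beta}
=\dot{\mathcal{K}}_{\omega^{1/\beta},\mathbf{0}}^{\beta p,\beta q}(\rn),$$
which I would prove by a direct computation. Writing $R_k:=B(\mathbf{0},2^k)\setminus B(\mathbf{0},2^{k-1})$, one has $\||f|^{\beta}\mathbf{1}_{R_k}\|_{L^p(\rn)}=\|f\mathbf{1}_{R_k}\|_{L^{\beta p}(\rn)}^{\beta}$, and hence, by Definition \ref{Debf}(i) and $[\omega^{1/\beta}(2^k)]^{\beta q}=[\omega(2^k)]^{q}$,
$$\|f\|_{(\dot{\mathcal{K}}_{\omega,\mathbf{0}}^{p,q})^{\beta}}
=\lf\||f|^{\beta}\r\|_{\dot{\mathcal{K}}_{\omega,\mathbf{0}}^{p,q}}^{1/\beta}
=\lf\{\sum_{k\in\zz}[\omega(2^k)]^{q}\|f\mathbf{1}_{R_k}\|_{L^{\beta p}(\rn)}^{\beta q}\r\}^{\frac{1}{\beta q}}
=\|f\|_{\dot{\mathcal{K}}_{\omega^{1/\beta},\mathbf{0}}^{\beta p,\beta q}}.$$
Consequently $H_{X^\beta}(\rn)=H\dot{\mathcal{K}}_{\omega^{1/\beta},\mathbf{0}}^{\beta p,\beta q}(\rn)$, which matches the target space in the statement.

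Having verified all the hypotheses, I would apply Theorem \ref{thm-Ia-02} to conclude that $I_\alpha$ extends to a bounded linear operator from $H\dot{\mathcal{K}}_{\omega,\mathbf{0}}^{p,q}(\rn)$ to $H\dot{\mathcal{K}}_{\omega^{1/\beta},\mathbf{0}}^{\beta p,\beta q}(\rn)$ if and only if $|B|^{\alpha/n}\le\wz C\|\mathbf{1}_B\|_{\dot{\mathcal{K}}_{\omega,\mathbf{0}}^{p,q}}^{(\beta-1)/\beta}$ for every ball $B\in\mathbb{B}(\rn)$, which is exactly the asserted equivalence. The main obstacle I anticipate is the verification of Assumption \ref{assump2}: the associate space of $\dot{\mathcal{K}}_{\omega,\mathbf{0}}^{p,q}(\rn)$ has a rather involved structure, and checking that the powered Hardy--Littlewood maximal operator is bounded on it with an exponent compatible with the constraint $r_0<\frac{1}{\beta}$ is the delicate technical point, resting essentially on the two Matuszewska--Orlicz index conditions imposed on $\omega$.
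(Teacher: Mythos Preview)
Your proposal is correct and follows essentially the same route as the paper: verify that $\dot{\mathcal{K}}_{\omega,\mathbf{0}}^{p,q}(\rn)$ is a ball quasi-Banach function space with absolutely continuous quasi-norm, check Assumptions \ref{assump1} and \ref{assump2} via the results of \cite{LYH2022}, and then invoke Theorem \ref{thm-Ia-02}. Your explicit computation of the $\beta$-convexification $(\dot{\mathcal{K}}_{\omega,\mathbf{0}}^{p,q})^{\beta}=\dot{\mathcal{K}}_{\omega^{1/\beta},\mathbf{0}}^{\beta p,\beta q}$ is a helpful clarification that the paper leaves implicit; just make sure when fixing $r_0$ you also impose $r_0<q$ (as the paper does), since this is needed for $(\dot{\mathcal{K}}_{\omega,\mathbf{0}}^{p,q})^{1/r_0}$ to be a ball Banach function space.
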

\begin{proof}
Let all the symbols be the same as in
the present theorem. By \cite[Theorems 1.2.20 and 1.4.1]{LYH2022},
we conclude that
$\dot{\mathcal{K}}_{\omega,\mathbf{0}}^{p,
q}(\mathbb{R}^{n})$ is a ball quasi-Banach space with an absolutely continuous quasi-norm.
To prove the desired conclusion,
we claim that all the assumptions
of Theorem \ref{thm-Ia-02} with
$X:=\dot{\mathcal{K}}_{\omega,
\mathbf{0}}^{p,q}(\rn)$ are satisfied. Indeed,
$\dot{\mathcal{K}}_{\omega,
\mathbf{0}}^{p,q}(\rn)$ satisfies Assumption
\ref{assump1} with
\begin{align}\label{p-lyq}
p_-:=\min \left\{p, \frac{n}{\max
\left\{M_{0}(\omega),
M_{\infty}(\omega)\right\}+n /
p}\right\};
\end{align}
see \cite[Lemma 4.3.8]{LYH2022}.
Moreover, from \cite[Lemma 1.8.5]{LYH2022}, we deduce
that Assumption \ref{assump2} with $X:=\dot{\mathcal{K}}_{\omega,
\mathbf{0}}^{p,q}(\rn)$
holds true for any given
$$
r_0 \in\left(0, \min \left\{\frac{1}{\beta},p_-, q\right\}\right)
$$
and
$$
p_0 \in\left(\max \left\{p, \frac{n}{\min \left\{m_{0}(\omega),
m_{\infty}(\omega)\right\}+n / p}\right\}, \infty\right].
$$
Thus, the above claim holds true.
Then, using Theorem \ref{thm-Ia-02} with $X:=\dot{\mathcal{K}}_{\omega,
\mathbf{0}}^{p,q}(\rn)$, we obtain the desired conclusion, which
completes the proof of Theorem \ref{Th-LGH1}.
\end{proof}

The following theorem is a corollary of Theorem \ref{Th-LGH1}.

\begin{theorem}\label{Th-LGH}
Let all the symbols be the same as in Theorem \ref{Th-LGH1}.
Further assume that $m_\infty(\omega)\geq M_0(\omega)>0$.
Then, when
$M_0(\omega)\leq\alpha-\frac{n}{p}$ and
$\beta\in[\frac{n+m_\infty(\omega)p}{n+m_\infty(\omega)p-\alpha
p},\infty)$,
or when
$M_0(\omega)>\alpha-\frac{n}{p}$ and
$\beta\in[\frac{n+m_\infty(\omega)p}{n+m_\infty(\omega)p-\alpha p},
\frac{n+M_0(\omega)p}{n+M_0(\omega)p-\alpha p}]$,
$I_\alpha$ can be extended to a unique bounded linear
operator, still denoted by $I_\alpha$, from $H\dot{\mathcal{K}}_{\omega,
\mathbf{0}}^{p,q}(\rn)$ to
$H\dot{\mathcal{K}}_{\omega^{1/\beta},
\mathbf{0}}^{\beta p,\beta
q}(\rn)$, namely,
there exists a positive constant $C$ such that, for any $f\in H\dot{\mathcal{K}}_{\omega,
\mathbf{0}}^{p,q}(\rn)$,
$$
\|I_\alpha(f)\|_{H\dot{\mathcal{K}}_{\omega^{1/\beta},
\mathbf{0}}^{\beta p,\beta
q}(\rn)}\leq C\|f\|_{H\dot{\mathcal{K}}_{\omega,
\mathbf{0}}^{p,q}(\rn)}.
$$
\end{theorem}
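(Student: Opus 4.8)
The plan is to deduce Theorem \ref{Th-LGH} from Theorem \ref{Th-LGH1} by verifying that, under the stated restrictions on $\beta$, the size condition
\begin{align*}
|B|^{\frac{\alpha}{n}}\lesssim
\|\mathbf{1}_B\|_{\dot{\mathcal{K}}_{\omega,\mathbf{0}}^{p,q}(\rn)}
^{\frac{\beta-1}{\beta}}
\end{align*}
holds uniformly for every ball $B\in\mathbb{B}(\rn)$; since all the remaining hypotheses of Theorem \ref{Th-LGH1} are already in force, this single estimate is exactly what must be checked. Writing $\theta:=\frac{\beta-1}{\beta}\in(0,1)$ and $|B|\sim r^{n}$ for $B$ of radius $r$, this is equivalent to $\|\mathbf{1}_B\|_{\dot{\mathcal{K}}_{\omega,\mathbf{0}}^{p,q}(\rn)}\gtrsim r^{\alpha/\theta}$, uniformly in $B$.

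First I would record the behaviour of $\|\mathbf{1}_B\|_{\dot{\mathcal{K}}_{\omega,\mathbf{0}}^{p,q}(\rn)}$ supplied by \cite{LYH2022}, separating balls according to their position relative to the origin and the size of their radius. For a ball $B(\mathbf{0},r)$ centered at the origin, the hypotheses $m_0(\omega),m_\infty(\omega)\in(-\frac{n}{p},\infty)$ make $\omega(t)t^{n/p}$ have positive lower-type exponent, so the defining series is dominated by its top term and $\|\mathbf{1}_{B(\mathbf{0},r)}\|_{\dot{\mathcal{K}}_{\omega,\mathbf{0}}^{p,q}(\rn)}\sim\omega(r)\,r^{n/p}$; for an off-center ball $B(x_0,r)$ with $|x_0|\gtrsim r$ one has instead $\|\mathbf{1}_{B(x_0,r)}\|_{\dot{\mathcal{K}}_{\omega,\mathbf{0}}^{p,q}(\rn)}\sim\omega(|x_0|)\,r^{n/p}$. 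Then, using the two-sided power bounds furnished by the Matuszewska--Orlicz indices, namely $r^{M_0(\omega)}\lesssim\omega(r)\lesssim r^{m_0(\omega)}$ for $r\in(0,1]$ and $r^{m_\infty(\omega)}\lesssim\omega(r)\lesssim r^{M_\infty(\omega)}$ for $r\in[1,\infty)$, I would reduce the required inequality $\omega(\cdot)\gtrsim r^{\alpha/\theta-n/p}$ to a finite set of power-counting comparisons, one per regime.

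The decisive comparisons are the two extremal ones, in terms of the exponent $e:=\frac{\alpha}{\theta}-\frac{n}{p}$. Small centered balls, where $\omega(r)\gtrsim r^{M_0(\omega)}$ is the worst lower bound, force $M_0(\omega)\le e$, that is, $\theta\le\frac{\alpha p}{n+M_0(\omega)p}$, equivalently $\beta\le\frac{n+M_0(\omega)p}{n+M_0(\omega)p-\alpha p}$; this constraint is vacuous precisely when $M_0(\omega)\le\alpha-\frac{n}{p}$, since then $e>\alpha-\frac{n}{p}\ge M_0(\omega)$ automatically (using $\theta<1$), which is the first case of the theorem and leaves no finite upper bound on $\beta$. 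Large balls (centered, or off-center and far from the origin), where $\omega\gtrsim t^{m_\infty(\omega)}$ is the worst bound, force $m_\infty(\omega)\ge e$, that is, $\theta\ge\frac{\alpha p}{n+m_\infty(\omega)p}$, equivalently $\beta\ge\frac{n+m_\infty(\omega)p}{n+m_\infty(\omega)p-\alpha p}$. Together these yield exactly the two ranges of $\beta$ asserted, the hypothesis $m_\infty(\omega)\ge M_0(\omega)>0$ guaranteeing both that the interval is nonempty (since $\frac{\alpha p}{n+m_\infty(\omega)p}\le\frac{\alpha p}{n+M_0(\omega)p}$) and that $e>0$, which is what makes the off-center estimates worst at $r\sim|x_0|$ rather than at small $r$.

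The main obstacle is the bookkeeping for off-center and intermediate balls: one must check that no configuration with radius comparable to distance from the origin, nor any ball straddling the unit scale where the small-$t$ and large-$t$ regimes of $\omega$ meet, produces a constraint stronger than the two extremal ones above. This is where the full four-index description of $\omega$ together with the almost monotonicity of $t\mapsto\omega(t)t^{-\gamma}$ is needed, and where I expect to spend most of the care; once $\|\mathbf{1}_B\|_{\dot{\mathcal{K}}_{\omega,\mathbf{0}}^{p,q}(\rn)}\gtrsim r^{\alpha/\theta}$ is established uniformly in $B$, an appeal to Theorem \ref{Th-LGH1} delivers the asserted boundedness (and, via the absolute continuity of the quasi-norm noted there, the uniqueness of the extension) at once.
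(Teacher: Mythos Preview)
Your reduction to Theorem \ref{Th-LGH1} and your identification of the two extremal exponent constraints are exactly right, and the algebra translating $M_0(\omega)\le e$ and $m_\infty(\omega)\ge e$ into the stated ranges of $\beta$ matches the paper's.

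The difference from the paper is in how the lower bound on $\|\mathbf{1}_B\|_{\dot{\mathcal{K}}_{\omega,\mathbf{0}}^{p,q}(\rn)}$ is obtained. You plan a case split by the position of $B$ (centered versus off-center, with separate treatment of $\omega(r)$ versus $\omega(|x_0|)$), and you correctly flag the off-center and scale-transition cases as the place requiring care. The paper avoids this entirely: it quotes directly from \cite[(4.9.12) and (4.9.13)]{LYH2022} the uniform estimate
\[
\|\mathbf{1}_{B(x_0,r)}\|_{\dot{\mathcal{K}}_{\omega,\mathbf{0}}^{p,q}(\rn)}
\gtrsim r^{n/p}\,\omega(r)
\]
valid for \emph{every} ball, regardless of center, and then uses the Matuszewska--Orlicz indices only once to bound $\omega(r)\gtrsim r^{k}$ for any fixed $k\in[M_0(\omega),m_\infty(\omega)]$. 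Choosing $k=\frac{\alpha\beta}{\beta-1}-\frac{n}{p}$ (which lies in that interval precisely under the stated restrictions on $\beta$) gives $\|\mathbf{1}_B\|\gtrsim r^{\alpha\beta/(\beta-1)}$ for all $r>0$ in one line. So your ``main obstacle'' is already absorbed into the cited result; you would be reproving part of \cite{LYH2022} rather than using it. Your route works, but once you know the uniform lower bound is available in the reference, the proof collapses to a couple of lines.
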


\begin{proof}
Let all the symbols be the same as in
the present theorem.
To prove the present theorem,
using Theorem \ref{Th-LGH1},
it suffices to show that,
for any ball $B\in\mathbb{B}(\rn)$,
\begin{align}\label{Herz-ineq}
|B|^{\frac{\alpha}{n}}\leq C
\|\mathbf{1}_B\|_{\dot{\mathcal{K}}_{\omega,\mathbf{0}}^{p,
q}(\mathbb{R}^{n})}^{\frac{\beta-1}{\beta}}.
\end{align}
Indeed, from
\cite[(4.9.12) and (4.9.13)]{LYH2022}, we deduce that, for any $k\in[M_0(\omega),m_\infty(\omega)]$
and any ball $B:=B(x_0,r)\in\mathbb{B}(\rn)$ with $x_0\in\rn$
and $r\in(0,\infty)$,
\begin{align}\label{lyqB}
\lf\|\mathbf{1}_B\r\|_{\dot{\mathcal{K}}_{\omega,
\mathbf{0}}^{p,q}(\rn)}\gtrsim
r^{\frac{n}{p}}\omega(r)\gtrsim
r^{\frac{n}{p}+k},
\end{align}
which implies that \eqref{Herz-ineq} holds true. This finishes the proof of
Theorem \ref{Th-LGH}.
\end{proof}

Using Theorem \ref{coclassic}, we can obtain the following conclusion.

\begin{theorem}\label{co-lyq}
Let all the symbols be the same as in Theorem \ref{Th-LGH}, $p_-\in(1,\infty)$ the same as in \eqref{p-lyq}, and
$\mathcal{M}_\alpha$ the same as in \eqref{de-Ma}.
Then, when
$M_0(\omega)\leq\alpha-\frac{n}{p}$ and
$\beta\in[\frac{n+m_\infty(\omega)p}{n+m_\infty(\omega)p-\alpha
p},\infty)$,
or when
$M_0(\omega)>\alpha-\frac{n}{p}$ and
$\beta\in[\frac{n+m_\infty(\omega)p}{n+m_\infty(\omega)p-\alpha p},
\frac{n+M_0(\omega)p}{n+M_0(\omega)p-\alpha p}]$,
$\mathcal{M}_\alpha$ is bounded from $\dot{\mathcal{K}}_{\omega,
\mathbf{0}}^{p,q}(\rn)$ to
$\dot{\mathcal{K}}_{\omega^{1/\beta},
\mathbf{0}}^{\beta p,\beta
q}(\rn)$, namely,
there exists a positive constant $C$ such that, for any $f\in \dot{\mathcal{K}}_{\omega,
\mathbf{0}}^{p,q}(\rn)$,
$$
\|\mathcal{M}_\alpha(f)\|_{\dot{\mathcal{K}}_{\omega^{1/\beta},
\mathbf{0}}^{\beta p,\beta
q}(\rn)}\leq C\|f\|_{\dot{\mathcal{K}}_{\omega,
\mathbf{0}}^{p,q}(\rn)}.
$$
\end{theorem}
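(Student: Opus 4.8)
The plan is to deduce this statement as a direct corollary of Theorem \ref{coclassic} applied with $X:=\dot{\mathcal{K}}_{\omega,\mathbf{0}}^{p,q}(\rn)$, in exact parallel with the proofs of Theorems \ref{co-Morrey} and \ref{co-Mixed}. First I would verify the hypotheses of Theorem \ref{coclassic}. By \cite[Lemma 4.3.8]{LYH2022}, the space $\dot{\mathcal{K}}_{\omega,\mathbf{0}}^{p,q}(\rn)$ satisfies Assumption \ref{assump1} with $p_-$ as in \eqref{p-lyq}; since we assume $p_-\in(1,\infty)$, Remark \ref{max-re} then gives that $\dot{\mathcal{K}}_{\omega,\mathbf{0}}^{p,q}(\rn)$ satisfies Assumption \ref{max-assump}. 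I would also invoke the results of \cite{LYH2022} to confirm that, under $p_-\in(1,\infty)$, the space is in fact a ball Banach function space, which is the precise setting in which Theorem \ref{coclassic} is stated.

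Next I would supply the capacity condition, namely that $|B|^{\frac{\alpha}{n}}\lesssim\|\mathbf{1}_B\|_{\dot{\mathcal{K}}_{\omega,\mathbf{0}}^{p,q}(\rn)}^{\frac{\beta-1}{\beta}}$ for every ball $B\in\mathbb{B}(\rn)$. This is precisely \eqref{Herz-ineq}, which was already established in the proof of Theorem \ref{Th-LGH} out of the two-sided estimate \eqref{lyqB}, under exactly the present dichotomy of hypotheses on $M_0(\omega)$, $m_\infty(\omega)$, and $\beta$. Hence no new computation is needed; I would simply cite that step, which is why the theorem is phrased as a corollary of Theorem \ref{Th-LGH}.

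Finally I would identify the image space, showing $[\dot{\mathcal{K}}_{\omega,\mathbf{0}}^{p,q}(\rn)]^{\beta}=\dot{\mathcal{K}}_{\omega^{1/\beta},\mathbf{0}}^{\beta p,\beta q}(\rn)$ via the definition of the $\beta$-convexification, the same identification already used implicitly in Theorem \ref{Th-LGH1}. The mechanism is that $\||f|^{\beta}\mathbf{1}_{R_k}\|_{L^p(\rn)}=\|f\mathbf{1}_{R_k}\|_{L^{\beta p}(\rn)}^{\beta}$ on each dyadic annulus $R_k:=B(\mathbf{0},2^k)\setminus B(\mathbf{0},2^{k-1})$, so that forming $\||f|^{\beta}\|_{\dot{\mathcal{K}}_{\omega,\mathbf{0}}^{p,q}(\rn)}^{1/\beta}$ turns the inner integrability exponent into $\beta p$, the outer summation exponent into $\beta q$, and the weight $\omega$ into $\omega^{1/\beta}$ (since $[\omega^{1/\beta}]^{\beta q}=\omega^{q}$). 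With the hypotheses verified, the capacity condition \eqref{Herz-ineq} in hand, and the convexification identified, Theorem \ref{coclassic} immediately yields the boundedness of $\mathcal{M}_{\alpha}$ from $\dot{\mathcal{K}}_{\omega,\mathbf{0}}^{p,q}(\rn)$ to $\dot{\mathcal{K}}_{\omega^{1/\beta},\mathbf{0}}^{\beta p,\beta q}(\rn)$. The only point demanding genuine care is the verification that the (a priori merely quasi-Banach) space $\dot{\mathcal{K}}_{\omega,\mathbf{0}}^{p,q}(\rn)$ is actually a ball Banach function space under $p_-\in(1,\infty)$, since Theorem \ref{coclassic} is not available for ball quasi-Banach function spaces; everything else is a transcription of the earlier applications.
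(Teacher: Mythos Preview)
Your proposal is correct and follows essentially the same route as the paper's proof: verify Assumption~\ref{max-assump} via $p_-\in(1,\infty)$ and Remark~\ref{max-re}, invoke the capacity estimate \eqref{lyqB} (equivalently \eqref{Herz-ineq}) established in the proof of Theorem~\ref{Th-LGH}, and apply Theorem~\ref{coclassic} with $X:=\dot{\mathcal{K}}_{\omega,\mathbf{0}}^{p,q}(\rn)$. You are in fact more careful than the paper on two points it leaves implicit: the explicit identification $[\dot{\mathcal{K}}_{\omega,\mathbf{0}}^{p,q}(\rn)]^{\beta}=\dot{\mathcal{K}}_{\omega^{1/\beta},\mathbf{0}}^{\beta p,\beta q}(\rn)$, and the verification that $\dot{\mathcal{K}}_{\omega,\mathbf{0}}^{p,q}(\rn)$ is a ball \emph{Banach} (not merely quasi-Banach) function space under $p_-\in(1,\infty)$, which is indeed the precise hypothesis of Theorem~\ref{coclassic}.
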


\begin{proof}
Let all the symbols be the same as in the present theorem.
From $p_-\in(1,\infty)$ and Remark \ref{max-re}, we infer that
$\dot{\mathcal{K}}_{\omega,
\mathbf{0}}^{p,q}(\rn)$ satisfies Assumption \ref{max-assump}.
Thus, all the assumptions of Theorem \ref{coclassic}
with $X:=\dot{\mathcal{K}}_{\omega,
\mathbf{0}}^{p,q}(\rn)$
are satisfied.
Then, using \eqref{lyqB} and Theorem \ref{coclassic} with $X:=\dot{\mathcal{K}}_{\omega,
\mathbf{0}}^{p,q}(\rn)$, we
obtain the desired conclusion, which completes the proof of Theorem \ref{co-lyq}.
\end{proof}

The following theorem is a corollary of Theorem \ref{thm-R}.

\begin{theorem}\label{lyq-X-Y-Z}
Let $\alpha\in(0,n)$ and $I_\alpha$ be the same as in \eqref{cla-I}.	
Let $p,q\in(0,\frac{n}{\alpha})$ and $\omega\in M(\mathbb{R}_+)$ satisfy
$m_0(\omega),m_{\fz}(\rn)\in(\alpha-\frac{n}{p},\infty)$.
Then $I_\alpha$ can be extended to a unique bounded linear
operator, still denoted by $I_\alpha$, from
$H\dot{\mathcal{K}}_{\omega,\mathbf{0}}^{p,q}(\rn)$ to
$H\dot{\mathcal{K}}_{\omega,\mathbf{0}}^{\frac{np}{n-\alpha p},\frac{nq}{n-\alpha q}}(\rn)$, namely,
there exists a positive constant $C$ such that, for any $f\in H\dot{\mathcal{K}}_{\omega,
\mathbf{0}}^{p,q}(\rn)$,
$$
\|I_\alpha(f)\|_{H\dot{\mathcal{K}}_{\omega,\mathbf{0}}^
{\frac{np}{n-\alpha p},\frac{nq}{n-\alpha p}}(\rn)}
\leq C\|f\|_{H\dot{\mathcal{K}}_{\omega,
\mathbf{0}}^{p,q}(\rn)}.
$$
\end{theorem}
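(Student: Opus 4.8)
The plan is to obtain Theorem \ref{lyq-X-Y-Z} as a direct application of Theorem \ref{thm-R} with $X:=\dot{\mathcal{K}}_{\omega,\mathbf{0}}^{p,q}(\rn)$ and $Y:=\dot{\mathcal{K}}_{\omega,\mathbf{0}}^{\frac{np}{n-\alpha p},\frac{nq}{n-\alpha q}}(\rn)$, so that the whole task reduces to choosing an admissible exponent $p_0$ and then verifying the three structural hypotheses of that theorem: that $X^{1/p_0}$ and $Y^{1/q_0}$ are ball Banach function spaces, that the associate-space identity \eqref{thm-R-02} holds, and that $\mathcal{M}$ is bounded on $(Y^{1/q_0})'$. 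The claimed uniqueness of the extension will then follow from Theorem \ref{thm-R} together with the fact, recorded via \cite[Theorem 1.4.1]{LYH2022}, that $\dot{\mathcal{K}}_{\omega,\mathbf{0}}^{p,q}(\rn)$ has an absolutely continuous quasi-norm. A useful preliminary observation is that the hypothesis $m_0(\omega),m_\infty(\omega)\in(\alpha-\frac{n}{p},\infty)$ is exactly what guarantees that $Y$ is a well-defined ball quasi-Banach function space: writing $\tilde p:=\frac{np}{n-\alpha p}$ one has $\frac{1}{\tilde p}=\frac1p-\frac\alpha n$, whence $-\frac{n}{\tilde p}=\alpha-\frac np$, and similarly for the index at infinity and for the second Herz exponent.

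Next I would pin down $p_0$. Choosing $p_0\in(0,\min\{p,q,\frac{n}{n+\alpha}\})$ and setting $\frac1{q_0}:=\frac1{p_0}-\frac\alpha n$ ensures $0<p_0<q_0\le1$, and a direct computation of the convexification of a Herz space gives
\begin{align*}
X^{1/p_0}=\dot{\mathcal{K}}_{\omega^{p_0},\mathbf{0}}^{p/p_0,q/p_0}(\rn)
\quad\text{and}\quad
Y^{1/q_0}=\dot{\mathcal{K}}_{\omega^{q_0},\mathbf{0}}^{\tilde p/q_0,\tilde q/q_0}(\rn),
\end{align*}
where $\tilde q:=\frac{nq}{n-\alpha q}$. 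Since $p_0$ is small, all four Herz exponents here exceed $1$, and since the Matuszewska--Orlicz indices scale linearly under powers (so that $m_0(\omega^{p_0})=p_0\,m_0(\omega)$, and likewise for the other three indices), the index hypotheses on $\omega$ transfer to $\omega^{p_0}$ and $\omega^{q_0}$; invoking the criterion of \cite[Theorem 1.2.20]{LYH2022} then shows that both $X^{1/p_0}$ and $Y^{1/q_0}$ are ball Banach function spaces.

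For the associate-space identity I would use the K\"othe-dual formula for local generalized Herz spaces, namely $(\dot{\mathcal{K}}_{\nu,\mathbf{0}}^{a,b})'=\dot{\mathcal{K}}_{\nu^{-1},\mathbf{0}}^{a',b'}$, together with the convexification rule $(\dot{\mathcal{K}}_{\nu,\mathbf{0}}^{a,b})^{s}=\dot{\mathcal{K}}_{\nu^{1/s},\mathbf{0}}^{as,bs}$. Applying these to $(X^{1/p_0})'$ and then raising to the power $p_0/q_0$ yields the weight $\omega^{-q_0}$ and the exponents $(p/p_0)'\,p_0/q_0$ and $(q/p_0)'\,p_0/q_0$, while $(Y^{1/q_0})'=\dot{\mathcal{K}}_{\omega^{-q_0},\mathbf{0}}^{(\tilde p/q_0)',(\tilde q/q_0)'}(\rn)$. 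The two weights already coincide, and matching the two pairs of exponents reduces, after a short computation, to the single relation $\frac1{p_0}=\frac1{q_0}+\frac\alpha n$, which holds by our choice of $q_0$. This establishes \eqref{thm-R-02}.

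The step I expect to be the main obstacle is the last hypothesis, the boundedness of $\mathcal{M}$ on $(Y^{1/q_0})'=\dot{\mathcal{K}}_{\omega^{-q_0},\mathbf{0}}^{(\tilde p/q_0)',(\tilde q/q_0)'}(\rn)$. Here one must feed the transformed data into the precise criterion for the boundedness of $\mathcal{M}$ on a local generalized Herz space (the analogue, for the dual exponents, of the maximal-operator results of \cite{LYH2022}), which demands both lower and upper control of the local and the infinity Matuszewska--Orlicz indices of $\omega^{-q_0}$ relative to the conjugate exponents. The delicate point is that passing to the associate space negates the weight and conjugates the exponents, so the lower index bounds for $\omega$ become upper index bounds for $\omega^{-q_0}$ and vice versa; verifying that the hypothesis $m_0(\omega),m_\infty(\omega)\in(\alpha-\frac np,\infty)$ survives all of these transformations and lands inside the exact admissible range is where the care is required. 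Once this is confirmed, Theorem \ref{thm-R} applies verbatim and yields \eqref{thm-R-05} for the present choice of $X$ and $Y$, completing the proof.
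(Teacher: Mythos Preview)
Your overall strategy is exactly the paper's: apply Theorem \ref{thm-R} with $X:=\dot{\mathcal{K}}_{\omega,\mathbf{0}}^{p,q}(\rn)$ and $Y:=\dot{\mathcal{K}}_{\omega,\mathbf{0}}^{\tilde p,\tilde q}(\rn)$, verify the convexification, associate-space identity, and maximal-operator hypotheses, and then invoke absolute continuity of the quasi-norm for uniqueness. The associate-space computation you outline matches the paper's (which cites \cite[Lemma 1.3.1 and Theorem 1.7.6]{LYH2022} for the convexification and duality rules), and your identification of the $\mathcal{M}$-boundedness step as the delicate one is correct.

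There is, however, a genuine gap in your choice of $p_0$. Taking $p_0\in(0,\min\{p,q,\frac{n}{n+\alpha}\})$ is not enough: you must also impose
\[
p_0<\frac{n}{\max\{M_0(\omega),M_\infty(\omega)\}+n/p},
\]
as the paper does. The point is that the theorem places \emph{no} upper bound on $M_0(\omega)$ or $M_\infty(\omega)$; they are finite (since $\omega\in M(\mathbb{R}_+)$) but may be arbitrarily large. When you pass to $(Y^{1/q_0})'=\dot{\mathcal{K}}_{\omega^{-q_0},\mathbf{0}}^{(\tilde p/q_0)',(\tilde q/q_0)'}(\rn)$ and try to verify the maximal-operator criterion (cf.\ \cite[Corollary 1.5.5]{LYH2022}), the required \emph{lower} bound $m_0(\omega^{-q_0})>-n/(\tilde p/q_0)'$ becomes, via $m_0(\omega^{-q_0})=-q_0 M_0(\omega)$, the inequality
\[
q_0 M_0(\omega)<n\Bigl(1-\frac{q_0}{\tilde p}\Bigr),
\]
which unwinds (using $\frac{1}{\tilde p}=\frac{1}{p}-\frac{\alpha}{n}$ and $\frac{1}{q_0}=\frac{1}{p_0}-\frac{\alpha}{n}$) precisely to $p_0<n/[M_0(\omega)+n/p]$. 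The hypothesis $m_0(\omega),m_\infty(\omega)>\alpha-\frac{n}{p}$ handles the \emph{other} half of the admissible range (the upper bounds $M_0(\omega^{-q_0}),M_\infty(\omega^{-q_0})<n q_0/\tilde p$), but it does nothing for this half. So with your $p_0$ the verification you flag as ``the main obstacle'' can actually fail; shrinking $p_0$ further, depending on $M_0(\omega)$ and $M_\infty(\omega)$, is what rescues it. A minor side remark: \cite[Theorem 1.2.20]{LYH2022} gives only the ball \emph{quasi}-Banach property; the paper uses \cite[Lemma 1.8.5]{LYH2022} (which again needs the sharper bound on $p_0$) for the ball Banach conclusion.
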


\begin{proof}
Let all the symbols be the same as in the present theorem.
From \cite[Theorems 1.2.20 and 1.4.1]{LYH2022},
we deduce that both $\dot{\mathcal{K}}_{\omega,\mathbf{0}}^{p,q}(\mathbb{R}^{n})$
and $\dot{\mathcal{K}}_{\omega,\mathbf{0}}^{\frac{np}{n-\alpha p},\frac{nq}{n-\alpha q}}(\rn)$
are ball quasi-Banach spaces with absolutely continuous quasi-norms.
Let
\begin{align}\label{lyq-X-Y-01}
p_0\in\lf(0,\min\lf\{p,q,\frac{n}{n+\alpha},\frac{n}{\max
\left\{M_{0}(\omega),M_{\infty}(\omega)\right\}+n/p}\r\}\r)
\end{align}
and
\begin{align}\label{lyq-X-Y-02}
\frac{1}{q_0}:=\frac{1}{p_0}-\frac{\alpha}{n}.
\end{align}
By \eqref{lyq-X-Y-01} and \eqref{lyq-X-Y-02}, we have
\begin{align}\label{q014}
q_0\in\lf(0,\min\lf\{\frac{np}{n-\alpha p},\frac{nq}{n-\alpha q},
\frac{n}{\max\left\{M_{0}(\omega),M_{\infty}(\omega)\right\}+(n-\alpha p)/p}\r\}\r)
\cap(0,1],
\end{align}
which, together with \eqref{lyq-X-Y-01} and \cite[Lemma 1.8.5]{LYH2022}, further implies that
both $[\dot{\mathcal{K}}_{\omega,\mathbf{0}}^{p,q}(\rn)]^{\frac{1}{p_0}}$
and $[\dot{\mathcal{K}}_{\omega,\mathbf{0}}^
{\frac{np}{n-\alpha p},\frac{nq}{n-\alpha q}}(\rn)]^{\frac{1}{q_0}}$
are ball Banach function spaces.

Next, we show
\begin{align}\label{lyq-X-Y-07}
\lf(\lf[\dot{\mathcal{K}}_{\omega,\mathbf{0}}^
{\frac{np}{n-\alpha p},\frac{nq}{n-\alpha q}}(\rn)\r]^{\frac{1}{q_0}}\r)'
=\lf[\lf(\lf[\dot{\mathcal{K}}_{\omega,\mathbf{0}}^{p,q}(\rn)\r]
^{\frac{1}{p_0}}\r)'\r]^{\frac{p_0}{q_0}}.
\end{align}
Indeed, by \cite[Lemma 1.3.1 and Theorem 1.7.6]{LYH2022}, we have
\begin{align}\label{lyq-X-Y-03}
\lf(\lf[\dot{\mathcal{K}}_{\omega,\mathbf{0}}^
{\frac{np}{n-\alpha p},\frac{nq}{n-\alpha q}}(\rn)\r]^{\frac{1}{q_0}}\r)'
&=\lf(\dot{\mathcal{K}}_{\omega^{q_0},\mathbf{0}}^
{\frac{np}{[n-\alpha p]q_0},\frac{nq}{[n-\alpha q]q_0}}(\rn)\r)'\\
&=\dot{\mathcal{K}}_{\omega^{-q_0},\mathbf{0}}^
{(\frac{np}{[n-\alpha p]q_0})',(\frac{nq}{[n-\alpha q]q_0})'}(\rn)\noz
\end{align}
and
\begin{align}\label{lyq-X-Y-04}
\lf[\lf(\lf[\dot{\mathcal{K}}_{\omega,\mathbf{0}}^{p,q}(\rn)\r]
^{\frac{1}{p_0}}\r)'\r]^{\frac{p_0}{q_0}}
&=\lf(\lf[\dot{\mathcal{K}}_{\omega^{p_0},\mathbf{0}}
^{\frac{p}{p_0},\frac{q}{p_0}}(\rn)\r]'\r)^{\frac{p_0}{q_0}}\\
&=\lf[\dot{\mathcal{K}}_{\omega^{-p_0},\mathbf{0}}
^{(\frac{p}{p_0})',(\frac{q}{p_0})'}(\rn)\r]^{\frac{p_0}{q_0}}
=\dot{\mathcal{K}}_{\omega^{-q_0},\mathbf{0}}
^{(\frac{p}{p_0})'(\frac{p_0}{q_0}),(\frac{q}{p_0})'\frac{p_0}{q_0}}(\rn).\noz
\end{align}
Moreover, using \eqref{lyq-X-Y-02}, we conclude that
$(\frac{np}{[n-\alpha p]q_0})'=(\frac{p}{p_0})'(\frac{p_0}{q_0})$
and $(\frac{nq}{[n-\alpha q]q_0})'=(\frac{q}{p_0})'\frac{p_0}{q_0}$,
which, combined with \eqref{lyq-X-Y-03} and \eqref{lyq-X-Y-04}, further implies that
\eqref{lyq-X-Y-07} holds true.

We now show that $\mathcal{M}$ is bounded on $([\dot{\mathcal{K}}_{\omega,\mathbf{0}}^
{\frac{np}{n-\alpha p},\frac{nq}{n-\alpha q}}(\rn)]^{\frac{1}{q_0}})'$.
Indeed, from \cite[Corollary 1.5.5]{LYH2022}, we deduce that, to obtain this desired
boundedness, it suffices to show
\begin{align}\label{lyq-X-Y-11}
\lf(\frac{np}{[n-\alpha p]q_0}\r)'>1,
\end{align}
\begin{align}\label{lyq-X-Y-09}
-\frac{n}{(\frac{np}{[n-\alpha p]q_0})'}<m_0(\omega^{-q_0})
\leq M_0(\omega^{-q_0})<\frac{(n-\alpha p)q_0}{p},
\end{align}
and
\begin{align}\label{lyq-X-Y-10}
-\frac{n}{(\frac{np}{[n-\alpha p]q_0})'}<m_\fz(\omega^{-q_0})
\leq M_\fz(\omega^{-q_0})<\frac{(n-\alpha p)q_0}{p}.
\end{align}
Notice that
\eqref{lyq-X-Y-11} follows from both the assumption $p_0<p$
and $\frac{1}{q_0}=\frac{1}{p_0}-\frac{\alpha}{n}$.
In addition, by \eqref{q014}
and the assumption that $\min\{m_0(\omega),m_{\fz}(\rn)\}>\alpha-\frac{n}{p}$,
we conclude that
\begin{align}\label{lyq-X-Y-05}
-\frac{n}{(\frac{np}{[n-\alpha p]q_0})'}<-q_0M_0(\omega)\leq -q_0m_0(\omega)
<\frac{(n-\alpha p)q_0}{p}
\end{align}
and
\begin{align}\label{lyq-X-Y-06}
-\frac{n}{(\frac{np}{[n-\alpha p]q_0})'}<
-q_0M_\fz(\omega)\leq -q_0m_\fz(\omega)
<\frac{(n-\alpha p)q_0}{p}.
\end{align}
Moreover, using \cite[p.\,6, Lemma 1.1.6]{LYH2022}, we have
$m_0(\omega^{-q_0})=-q_0M_0(\omega)$,
$M_0(\omega^{-q_0})=-q_0m_0(\omega)$,
$m_\fz(\omega^{-q_0})=-q_0M_\fz(\omega)$, and
$M_\fz(\omega^{-q_0})=-q_0m_\fz(\omega)$. From this,
\eqref{lyq-X-Y-05}, and \eqref{lyq-X-Y-06}, we deduce that both
\eqref{lyq-X-Y-09} and \eqref{lyq-X-Y-10} hold true
and hence $\mathcal{M}$ is bounded on
$([\dot{\mathcal{K}}_{\omega,\mathbf{0}}^
{\frac{np}{n-\alpha p},\frac{nq}{n-\alpha q}}(\rn)]^{\frac{1}{q_0}})'$.

All together,
we find that all the assumptions of Theorem \ref{thm-R}
with both $X:=\dot{\mathcal{K}}_{\omega,\mathbf{0}}^{p,q}(\rn)$
and $Y:=\dot{\mathcal{K}}_{\omega,\mathbf{0}}^{\frac{np}{n-\alpha
p},\frac{nq}{n-\alpha q}}(\rn)$ are satisfied.
Thus, by Theorem \ref{thm-R} with both $X:=\dot{\mathcal{K}}_{\omega,\mathbf{0}}^{p,q}(\rn)$
and $Y:=\dot{\mathcal{K}}_{\omega,\mathbf{0}}^{\frac{np}{n-\alpha
p},\frac{nq}{n-\alpha q}}(\rn)$, we obtain the desired result.
This finishes the proof of Theorem \ref{lyq-X-Y-Z}.
\end{proof}

\begin{remark}
Let $\alpha\in(0,n)$, $I_\alpha$ be the same as in \eqref{cla-I}, $\omega(t):=t^{\wz \alpha}$ for any $t\in(0,\infty)$ with $\wz \alpha\in(0,\infty)$, and
$\dot{\mathcal{K}}_{\wz\alpha, \mathbf{0}}^{p,q}(\rn):=
\dot{\mathcal{K}}_{\omega, \mathbf{0}}^{p,q}(\rn)$.
Lu et al. \cite[Theorem 2.6]{LY1996} proved that $I_\alpha$ is bounded from
$H\dot{\mathcal{K}}_{\wz\alpha, \mathbf{0}}^{p_1,q_1}(\rn)$
to
$H\dot{\mathcal{K}}_{\wz\alpha, \mathbf{0}}^{p_2,q_2}(\rn)$ when
$p_1\in(1,\frac{n}{\alpha})$, $\frac{1}{p_2}=\frac{1}{p_1}
-\frac{\alpha}{n}$, $0<q_1\leq q_2<\infty$,
and $\wz\alpha\in(n(1-\frac{1}{p_2}),\infty)$.
\begin{enumerate}
\item[$\mathrm{(i)}$]
From Theorem \ref{Th-LGH}, we deduce
that $I_\alpha$ is bounded
from $H\dot{\mathcal{K}}_{\wz\alpha,
\mathbf{0}}^{p,q}(\rn)$ to
$H\dot{\mathcal{K}}_{\wz\alpha/\beta,
\mathbf{0}}^{\beta p,\beta q}(\rn)$
with $\beta$ the same as in Theorem \ref{Th-LGH}.
Thus, we conclude that Theorem \ref{Th-LGH} and \cite[Theorem 2.6]{LY1996} can not
cover each other
even in the case that $\omega(t):=t^{\wz \alpha}$ for any $t\in(0,\infty)$.
\item[$\mathrm{(ii)}$]
By Theorem \ref{lyq-X-Y-Z}, we infer that $I_\alpha$ is bounded from
$H\dot{\mathcal{K}}_{\wz\alpha,\mathbf{0}}^{p,q}(\rn)$ to
$H\dot{\mathcal{K}}_{\wz\alpha,\mathbf{0}}^{\frac{np}{n-\alpha p},\frac{nq}{n-\alpha q}}(\rn)$ with
$p,q\in(0,\frac{n}{\alpha})$ and $\wz\alpha\in(\alpha-\frac{n}{p},\infty)$.
When $\omega(t):=t^{\wz \alpha}$ for any $t\in(0,\infty)$,
$p\in(1,\frac{n}{\alpha})$, and $\wz\alpha\in(n+\alpha-\frac{n}{p},\infty)$,
it is easy to show that \cite[Theorem 2.6]{LY1996} is more general than Theorem \ref{lyq-X-Y-Z} in this case.
However, even in the case that $\omega(t):=t^{\wz \alpha}$ for any
$t\in(0,\infty)$, Theorem \ref{lyq-X-Y-Z} can be applied to
$H\dot{\mathcal{K}}_{\wz\alpha,\mathbf{0}}^{p,q}(\rn)$
when $p\in(0,1]$ or
$\wz\alpha\in(\alpha-\frac{n}{p},n+\alpha-\frac{n}{p})$,
but \cite[Theorem 2.6]{LY1996} can not.
\end{enumerate}
\end{remark}

\subsection{Mixed Herz Spaces\label{M-H}}

We first recall the following definition of
mixed Herz spaces, which is just \cite[Definition 2.3]{zyz2022}.

\begin{definition}\label{mhz}
Let $\vec{p}:=(p_{1},\ldots,p_{n}),\vec{q}:=(q_{1},\ldots,q_{n})
\in(0,\infty]^{n}$, $\vec{\alpha}:=
(\alpha_{1},\ldots,
\alpha_{n})\in\rn$, and $R_{k_i}:=(-2^{k_i},2^{k_i})\setminus
(-2^{k_i-1},2^{k_i-1})$ for any $i\in\{1,\ldots,n\}$ and
$k_i\in\zz$.
The \emph{mixed Herz space}
$\dot{E}^{\vec{\alpha},\vec{p}}_{\vec{q}}(\rn)$ is
defined to be the
set of all the functions
$f\in \mathcal{M}(\rn)$ such
that
\begin{align*}
\|f\|_{\dot{E}^{\vec{\alpha},\vec{p}}_{\vec{q}}
(\rn)}:&=\lf\{\sum_{k_{n} \in
\zz}2^{k_{n}
p_{n}\alpha_{n}}
\lf\{\int_{R_{k_{n}}}\cdots\Bigg\{\sum_{k_{1}
\in \zz}
2^{k_{1}p_{1}\alpha_{1}}\r.\r.\\
&\lf.\lf.\lf.\quad\times\lf[\int_{R_{k_{1}}}|f(x_{1},
\ldots,x_{n})|
^{q_{1}}\,dx_{1} \r]^{\f{p_{1}}{q_{1}}}
\r\}^{\f{q_{2}}
{p_{1}}}\cdots
\,dx_{n}  \r\}^{\f{p_{n}}{q_{n}}}\r\}
^{\f{1}{p_n}}\\
&=:\,\|\cdots\|f\|_{\dot{K}^{\alpha_{1},p_{1}}_{q_{1}}
(\rr)}\cdots\|_{\dot{K}^{\alpha_{n},p_{n}}_{q_{n}}(\rr)}
<\infty
\end{align*}
with the usual modifications made when $p_{i}
=\infty$ or
$q_{j}=\infty$ for some $i,j\in\{1,\ldots,n \}$,
where
we denote by $\|\cdots\|f\|_{\dot{K}^{\alpha_{1},p_{1}}_{q_{1}}(\rr)}
\cdots\|_{\dot{K}^{\alpha_{n},p_{n}}_{q_{n}}(\rr)}$ the
norm obtained after taking successively
the $\dot{K}^{\alpha_{1},p_{1}}_{q_{1}}(\rr)$ norm
to $x_{1}$, the $\dot{K}^{\alpha_{2},p_{2}}_{q_{2}}
(\rr)$ norm to $x_{2}$,
$\ldots$, and
the $\dot{K}^{\alpha_{n},p_{n}}_{q_{n}}(\rr)$ norm to $x_{n}$.
\end{definition}

Recall that a special case of the mixed Herz space was originally
introduced by Huang et al. \cite{HWYY2021} to study the Lebesgue
points of functions in mixed-norm Lebesgue spaces and, later,
Zhao et al. \cite{zyz2022} generalized it to the above case.
Also, both the dual theorem and the Riesz--Thorin
interpolation theorem on the above mixed Herz space
have been fully studied in \cite{zyz2022}.

\begin{remark}
Let $\vec{p},\vec{q}
\in(0,\infty]^{n}$ and $\vec{\alpha}\in\rn$.
By \cite[Propositions 2.8 and 2.22]{zyz2022}, we conclude that
$\dot{E}^{\vec{\alpha},\vec{p}}_{\vec{q}}(\rn)$
is a ball quasi-Banach space.
However, from \cite[Remark 2.4]{zyz2022}, we deduce that, when $\vec{p}=\vec{q}$ and
$\vec\alpha=\mathbf{0}$, the mixed Herz
space $\dot{E}^{\vec{\alpha},\vec{p}}_{\vec{q}}(\rn)$
coincides with the mixed Lebesgue
space $L^{\vec{p}}(\rn)$ defined in Definition \ref{mixed}.
Using Remark \ref{mix-r}, we conclude that $L^{\vec{p}}(\rn)$
may not be a quasi-Banach function space and hence
$\dot{E}^{\vec{\alpha},\vec{p}}_{\vec{q}}(\rn)$
may not be a quasi-Banach function space.
\end{remark}

The following theorem is a corollary of Theorem \ref{thm-Ia-02}. In
what follows, the
\emph{mixed Herz-Hardy space}
$H\dot{E}^{\vec{\alpha},\vec{p}}_{\vec{q}}(\rn)$ is defined as in
Definition \ref{2d1} with
$X:=\dot{E}^{\vec{\alpha},\vec{p}}_{\vec{q}}(\rn)$
(see \cite[Definition 5.2]{zyz2022}).

\begin{theorem}\label{Th-M-H}
Let $\beta\in(1,\infty)$, $\alpha\in(0,n)$, and $I_\alpha$ be the same as in \eqref{cla-I}.
Let $\vec{p}:=(p_{1},\ldots,p_{n}),\vec{q}:=(q_{1},\ldots,q_{n})
\in(0,\infty)^{n}$, and $\vec{\alpha}:=
(\alpha_{1},\ldots,
\alpha_{n})\in\rn$ satisfy
$\sum_{i=1}^{n}(\frac{1}{p_i}+\alpha_i)\in(\alpha,\infty)$
and $\alpha_i\in(0,\infty)$ for any
$i\in\{1,\ldots,n\}$.
Then $I_\alpha$ can be extended to a unique bounded linear
operator, still denoted by $I_\alpha$, from
$H\dot{E}^{\vec{\alpha},\vec{p}}_{\vec{q}}(\rn)$ to
$H\dot{E}^{\vec{\alpha}/\beta,\beta\vec{p}}_{\beta\vec{q}}(\rn)$,
namely, there exists a positive constant $C$ such that, for any $f\in
H\dot{E}^{\vec{\alpha},\vec{p}}_{\vec{q}}(\rn)$,
$$\|I_\alpha(f)\|_{H\dot{E}^{\vec{\alpha}/\beta,\beta\vec{p}}_{\beta\vec{q}}(\rn)}\leq
C\|f\|_{H\dot{E}^{\vec{\alpha},\vec{p}}_{\vec{q}}(\rn)}
$$
if and only if $\beta=\frac{\sum_{i=1}^{n}(\frac{1}{p_i}+\alpha_i)}
{\sum_{i=1}^{n}(\frac{1}{p_i}+\alpha_i)-\alpha}$.
\end{theorem}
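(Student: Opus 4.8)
The plan is to derive Theorem \ref{Th-M-H} as a direct corollary of Theorem \ref{thm-Ia-02} with $X:=\dot{E}^{\vec{\alpha},\vec{p}}_{\vec{q}}(\rn)$, following the template already used for Theorems \ref{Th-Mixed} and \ref{Th-LGH1}. First I would record the structural input: by \cite[Propositions 2.8 and 2.22]{zyz2022}, together with the absolute continuity of its quasi-norm, $\dot{E}^{\vec{\alpha},\vec{p}}_{\vec{q}}(\rn)$ is a ball quasi-Banach function space with an absolutely continuous quasi-norm, so the density of $H_{\mathrm{fin}}^{X,\infty,s,d}(\rn)\cap C(\rn)$ exploited inside Theorem \ref{thm-Ia-02} is available. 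I would also note, by applying the one-dimensional convexification identity $(\dot{K}^{\alpha_i,p_i}_{q_i})^{\beta}=\dot{K}^{\alpha_i/\beta,\beta p_i}_{\beta q_i}$ in each variable, that $X^{\beta}=\dot{E}^{\vec{\alpha}/\beta,\beta\vec{p}}_{\beta\vec{q}}(\rn)$, which identifies the claimed image space with $H_{X^{\beta}}(\rn)$.

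The technical heart, and the main obstacle, is verifying Assumptions \ref{assump1} and \ref{assump2} for $X$. For Assumption \ref{assump1} I would invoke the Fefferman--Stein vector-valued maximal inequality for mixed Herz spaces from \cite{zyz2022}, which furnishes an admissible $p_-$ determined by the local integrability indices $\{q_i\}_{i=1}^n$ and the Herz weights $\{\alpha_i\}_{i=1}^n$; Remark \ref{main-remark}(i) then transfers it to $X^{\beta}$ with exponent $\beta p_-$. For Assumption \ref{assump2} I would use the dual (associate space) description of the mixed Herz space in \cite{zyz2022} to realize $(X^{1/r_0})'$ as another mixed Herz space, on which the powered maximal operator $\mathcal{M}^{((p_0/r_0)')}$ is bounded for suitably chosen $r_0\in(0,\min\{\frac1\beta,p_-\})$ and $p_0\in(r_0,\infty)$, with Remark \ref{main-remark}(ii) again passing this to $X^{\beta}$. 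The delicate point is the parameter bookkeeping: one must select $r_0,p_0$ inside the ranges forced by $\vec{p},\vec{q},\vec{\alpha}$ so that simultaneously $X^{1/r_0}$ is a ball Banach function space and the associate-space maximal bound holds.

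Next I would compute $\|\mathbf{1}_B\|_{X}$ on balls by the dilation covariance of the mixed Herz quasi-norm: applying the elementary one-dimensional identity $\|h(\cdot/r)\|_{\dot{K}^{\alpha_i,p_i}_{q_i}}=r^{\alpha_i+1/q_i}\|h\|_{\dot{K}^{\alpha_i,p_i}_{q_i}}$ successively in each variable yields
\begin{align*}
\lf\|\mathbf{1}_{B(\mathbf{0},r)}\r\|_{\dot{E}^{\vec{\alpha},\vec{p}}_{\vec{q}}(\rn)}
=r^{\gamma}\lf\|\mathbf{1}_{B(\mathbf{0},1)}\r\|_{\dot{E}^{\vec{\alpha},\vec{p}}_{\vec{q}}(\rn)}
\quad\text{with}\quad \gamma:=\sum_{i=1}^{n}\lf(\alpha_i+\frac{1}{q_i}\r),
\end{align*}
where the summand $\alpha_i$ comes from the shift of the dyadic Herz weights and $1/q_i$ from the local $L^{q_i}$-integration in the $i$-th variable; the Euclidean (non-product) shape of $B$ affects only the finite positive constant $\|\mathbf{1}_{B(\mathbf{0},1)}\|_{\dot{E}^{\vec{\alpha},\vec{p}}_{\vec{q}}(\rn)}$, finiteness being guaranteed by $\alpha_i+1/q_i>0$. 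Combining this with \eqref{key-c} to reduce off-center balls to centered ones, I obtain $\|\mathbf{1}_B\|_{X}\sim|B|^{\gamma/n}$, so the geometric condition $|B|^{\frac{\alpha}{n}}\lesssim\|\mathbf{1}_B\|_{X}^{\frac{\beta-1}{\beta}}$ holds uniformly over $\mathbb{B}(\rn)$ if and only if $\alpha=\gamma\frac{\beta-1}{\beta}$, that is, $\beta=\frac{\gamma}{\gamma-\alpha}$; the summability hypothesis $\gamma>\alpha$ ensures this $\beta$ lies in $(1,\infty)$.

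Finally I would feed these facts into Theorem \ref{thm-Ia-02}: the verified Assumptions \ref{assump1} and \ref{assump2}, together with the equivalence just established, convert the abstract ``if and only if'' into the concrete assertion that $I_{\alpha}$ extends to a bounded operator from $H\dot{E}^{\vec{\alpha},\vec{p}}_{\vec{q}}(\rn)$ to $H\dot{E}^{\vec{\alpha}/\beta,\beta\vec{p}}_{\beta\vec{q}}(\rn)$ precisely for the critical exponent. Here the necessity direction is inherited verbatim from the affine-rescaling argument on a fixed atom in Theorem \ref{thm-Ia-02}, while sufficiency rides on the atom-to-molecule passage of Lemmas \ref{lemma-atm} and \ref{2l1} already packaged inside that theorem. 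I expect no genuinely new difficulty in this last gluing step; essentially all the work is concentrated in the assumption-checking of the second paragraph and the clean scaling computation of the third.
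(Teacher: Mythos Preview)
Your outline follows the paper's route exactly: show that $\dot{E}^{\vec{\alpha},\vec{p}}_{\vec{q}}(\rn)$ is a ball quasi-Banach function space with absolutely continuous quasi-norm (the paper cites \cite[Propositions 2.8 and 2.22]{zyz2022}), verify Assumptions \ref{assump1} and \ref{assump2} (the paper invokes \cite[Lemma 5.3(i) and (ii)]{zyz2022} directly, with the explicit $p_-$ recorded in \eqref{p-zyr}), compute the scaling of $\|\mathbf{1}_B\|_X$, and feed everything into Theorem \ref{thm-Ia-02}. The only cosmetic difference is that the paper estimates $\|\mathbf{1}_B\|_X$ by passing to an inscribed cube $Q(x,r)\subset B$ and factorizing its mixed Herz norm as $\prod_{i=1}^n\|\mathbf{1}_{(x_i-r,x_i+r)}\|_{\dot{K}^{\alpha_i,p_i}_{q_i}(\rr)}$, whereas you invoke dilation covariance of the centered ball; both routes yield the same power of $r$.

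There is, however, one substantive discrepancy you should flag. Your dilation identity gives $\gamma=\sum_{i=1}^n(\alpha_i+1/q_i)$, the $1/q_i$ arising from the local $L^{q_i}$-integration in Definition \ref{mhz}; the paper instead records $\sum_{i=1}^n(1/p_i+\alpha_i)$ in both the theorem statement and in \eqref{zyrB}. Your exponent is the one forced by the definition of $\dot{K}^{\alpha_i,p_i}_{q_i}$ (indeed, applying the very estimate \cite[(4.9.12)]{LYH2022} the paper cites, translated to the present notational convention, produces $r^{1/q_i+\alpha_i}$, and the terms $(\alpha_i+1/q_i)^{-1}$ in \eqref{p-zyr} corroborate this). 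So the $1/p_i$ in the paper appears to be a slip interchanging the outer-sum and local-Lebesgue indices. Your argument is unaffected: the critical value is $\beta=\gamma/(\gamma-\alpha)$ with your $\gamma$, and you should simply note the mismatch with the stated formula rather than force your computation to agree with it.
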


\begin{proof}
Let all the symbols be the same as in the present
theorem. By \cite[Propositions 2.8 and 2.22]{zyz2022}, we conclude that
$\dot{E}^{\vec{\alpha},\vec{p}}_{\vec{q}}(\rn)$
is a ball quasi-Banach space with an absolutely continuous quasi-norm.
From \cite[Lemma 5.3(i)]{zyz2022} and its proof,
we infer that
$\dot{E}^{\vec{\alpha},\vec{p}}_{\vec{q}}(\rn)$ satisfies Assumption
\ref{assump1} with
\begin{align}\label{p-zyr}
p_-:=\min\lf\{p_1,\ldots,p_n,
q_1,\ldots,q_n,\lf(\alpha_{1}+\frac{1}{q_{1}}\r)^{-1},
\ldots,\lf(\alpha_{n}+\frac{1}{q_{n}}\r)^{-1} \r \}.
\end{align}
Let $$r_0\in\lf(0,\min\lf\{\frac{1}{\beta},p_-\r\}\r)$$
and $$p_0\in\lf(\max\lf\{p_1,\ldots,p_n,
q_1,\ldots,q_n,\lf(\alpha_{1}
+\frac{1}{q_{1}}\r)^{-1}
,\ldots,\lf(\alpha_{n}+\frac
{1}{q_{n}}\r)^{-1}\r\},\infty \r).$$ Then, by \cite[Lemma 5.3(ii)]{zyz2022}
and its proof,
we conclude that Assumption \ref{assump2} also
holds true with $X:=\dot{E}^{\vec{\alpha},\vec{p}}_{\vec{q}}(\rn)$.
Thus, all the assumptions of Theorem \ref{thm-Ia-02}
with $X:=\dot{E}^{\vec{\alpha},\vec{p}}_{\vec{q}}(\rn)$ hold true.

Moreover, for any
$B:=B(x,r)\in\mathbb{B}(\rn)$, where
$x=(x_1,x_2,\dots,x_n)\in\rn$ and
$r\in(0,\infty)$, letting $Q(x,r)$ be the
cube with edges parallel to the coordinate axes, center $x\in\rn$,
edge length
$r$, by \cite[(4.9.12)]{LYH2022}
with $\omega(t):=t^{\alpha_i}$ for any $t\in(0,\infty)$ and $i\in\{1,\ldots,n\}$, we have
\begin{align*}
\lf\|\mathbf{1}_B\r\|_{\dot{E}^{\vec{\alpha},\vec{p}}_{\vec{q}}(\rn)}
&\gtrsim\|\mathbf{1}_{Q(x,r)}\|_{\dot{E}^{\vec{\alpha},\vec{p}}
_{\vec{q}}(\rn)}\\
&\sim\Pi_{i=1}^n\|\mathbf{1}_{(x_i-r,x_i+r)}\|_{\dot{K}
^{\alpha_{i},p_{i}}_{q_{i}}
(\rr)}
\gtrsim r^{\sum_{i=1}^{n}(\frac{1}{p_i}+\alpha_i)}.
\end{align*}
This implies that, for any ball $B\in\mathbb{B}(\rn)$,
\begin{align}\label{zyrB}
|B|^{\frac{\alpha}{n}}\lesssim\|\mathbf{1}_B\|_
{\dot{E}^{\vec{\alpha},\vec{p}}_{\vec{q}}(\rn)}^{\frac{\beta-1}{\beta}}
\ \text{if and only if}\
\beta=\frac{\sum_{i=1}^{n}(\frac{1}{p_i}+\alpha_i)}
{\sum_{i=1}^{n}(\frac{1}{p_i}+\alpha_i)-\alpha}.
\end{align}
Then, using Theorem \ref{thm-Ia-02} with
$X:=\dot{E}^{\vec{\alpha},\vec{p}}_{\vec{q}}(\rn)$,
we obtain the desired conclusion,
which completes the proof of
Theorem \ref{Th-M-H}.
\end{proof}

Using Theorem \ref{coclassic}, we can obtain the following conclusion.

\begin{theorem}\label{co-M-H}
Let all the symbols be the same as in Theorem \ref{Th-M-H},
$p_-\in(1,\infty)$ the same as in \eqref{p-zyr}, and $\mathcal{M}_\alpha$
the same as in \eqref{de-Ma}.
Then $\mathcal{M}_\alpha$ is bounded from
$\dot{E}^{\vec{\alpha},\vec{p}}_{\vec{q}}(\rn)$ to
$\dot{E}^{\vec{\alpha}/\beta,\beta\vec{p}}_{\beta\vec{q}}(\rn)$,
namely, there exists a positive constant $C$ such that, for any $f\in
\dot{E}^{\vec{\alpha},\vec{p}}_{\vec{q}}(\rn)$,
$$\|\mathcal{M}_\alpha(f)\|_{\dot{E}^{\vec{\alpha}/\beta,\beta\vec{p}}_{\beta\vec{q}}(\rn)}\leq
C\|f\|_{\dot{E}^{\vec{\alpha},\vec{p}}_{\vec{q}}(\rn)}
$$
if and only if $\beta=\frac{\sum_{i=1}^{n}(\frac{1}{p_i}+\alpha_i)}
{\sum_{i=1}^{n}(\frac{1}{p_i}+\alpha_i)-\alpha}$.
\end{theorem}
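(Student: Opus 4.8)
The plan is to obtain this statement as a direct corollary of Theorem \ref{coclassic}, in exactly the same way that Theorems \ref{co-Morrey}, \ref{co-Mixed}, and \ref{co-lyq} were deduced for Morrey, mixed-norm Lebesgue, and local generalized Herz spaces. First I would recall, from the proof of Theorem \ref{Th-M-H} (which invokes \cite[Lemma 5.3(i)]{zyz2022}), that $\dot{E}^{\vec{\alpha},\vec{p}}_{\vec{q}}(\rn)$ satisfies Assumption \ref{assump1} with $p_-$ as in \eqref{p-zyr}. Since we now assume $p_-\in(1,\infty)$, Remark \ref{max-re} immediately yields that $\dot{E}^{\vec{\alpha},\vec{p}}_{\vec{q}}(\rn)$ satisfies Assumption \ref{max-assump}, namely, $\mathcal{M}$ is bounded on $\dot{E}^{\vec{\alpha},\vec{p}}_{\vec{q}}(\rn)$. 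Moreover, the assumption $p_-\in(1,\infty)$ guarantees that this mixed Herz space is a ball Banach function space, so all the hypotheses of Theorem \ref{coclassic} with $X:=\dot{E}^{\vec{\alpha},\vec{p}}_{\vec{q}}(\rn)$ are in force.

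The one genuinely new bookkeeping step is to identify the $\beta$-convexification of the mixed Herz space appearing in Theorem \ref{coclassic} with the target space in the statement. A coordinatewise computation from Definition \ref{mhz} does this: for a single Herz factor $\dot{K}^{\alpha_i,p_i}_{q_i}(\rr)$ one has $\lf\||f|^\beta\mathbf{1}_{R_{k_i}}\r\|_{L^{q_i}(\rr)}=\lf\|f\mathbf{1}_{R_{k_i}}\r\|_{L^{\beta q_i}(\rr)}^\beta$ together with $2^{k_ip_i\alpha_i}=2^{k_i(\beta p_i)(\alpha_i/\beta)}$, so that taking the $L^{q_i}$-quasi-norm of $|f|^\beta$ and then the $1/\beta$ power reproduces the $\dot{K}^{\alpha_i/\beta,\beta p_i}_{\beta q_i}(\rr)$-quasi-norm of $f$. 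Iterating this identity successively through all $n$ inner norms yields $(\dot{E}^{\vec{\alpha},\vec{p}}_{\vec{q}}(\rn))^\beta=\dot{E}^{\vec{\alpha}/\beta,\beta\vec{p}}_{\beta\vec{q}}(\rn)$, which is precisely the image space claimed.

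Finally, I would invoke the estimate \eqref{zyrB}, already established inside the proof of Theorem \ref{Th-M-H}, which asserts that for any ball $B\in\mathbb{B}(\rn)$ the inequality $|B|^{\frac{\alpha}{n}}\ls\lf\|\mathbf{1}_B\r\|_{\dot{E}^{\vec{\alpha},\vec{p}}_{\vec{q}}(\rn)}^{\frac{\beta-1}{\beta}}$ holds if and only if $\beta=\frac{\sum_{i=1}^{n}(\frac{1}{p_i}+\alpha_i)}{\sum_{i=1}^{n}(\frac{1}{p_i}+\alpha_i)-\alpha}$. Combining this equivalence with the characteristic-function criterion of Theorem \ref{coclassic} applied to $X:=\dot{E}^{\vec{\alpha},\vec{p}}_{\vec{q}}(\rn)$ simultaneously delivers both the sufficiency and the necessity of the stated value of $\beta$, completing the proof. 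Since every ingredient is either a verbatim citation from the setup of Theorem \ref{Th-M-H} or the routine convexification identity, I do not anticipate any real obstacle; the only place meriting a little care is the coordinatewise verification of $(\dot{E}^{\vec{\alpha},\vec{p}}_{\vec{q}}(\rn))^\beta=\dot{E}^{\vec{\alpha}/\beta,\beta\vec{p}}_{\beta\vec{q}}(\rn)$ and the consistent reuse of \eqref{zyrB}.
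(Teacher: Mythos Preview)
Your proposal is correct and follows essentially the same approach as the paper's own proof: verify Assumption \ref{max-assump} via $p_-\in(1,\infty)$ and Remark \ref{max-re}, then apply Theorem \ref{coclassic} together with the equivalence \eqref{zyrB}. You even spell out two points the paper leaves implicit --- that $p_->1$ makes $\dot{E}^{\vec{\alpha},\vec{p}}_{\vec{q}}(\rn)$ a ball \emph{Banach} space (needed for Theorem \ref{coclassic}) and the coordinatewise identification $(\dot{E}^{\vec{\alpha},\vec{p}}_{\vec{q}}(\rn))^\beta=\dot{E}^{\vec{\alpha}/\beta,\beta\vec{p}}_{\beta\vec{q}}(\rn)$ --- so your write-up is slightly more complete than the original.
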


\begin{proof}
Let all the symbols be the same as in the present theorem.
From both $p_-\in(1,\infty)$ and Remark \ref{max-re}, we infer that
$\dot{E}^{\vec{\alpha},\vec{p}}_{\vec{q}}(\rn)$ satisfies Assumption \ref{max-assump}.
Thus, all the assumptions of Theorem \ref{coclassic}
with $X:=\dot{E}^{\vec{\alpha},\vec{p}}_{\vec{q}}(\rn)$
are satisfied.
Then, using \eqref{zyrB} and Theorem \ref{coclassic} with
$X:=\dot{E}^{\vec{\alpha},\vec{p}}_{\vec{q}}(\rn)$,
we obtain the desired conclusion, which completes the proof of Theorem \ref{co-M-H}.
\end{proof}

The following theorem is a corollary of
Theorem \ref{thm-R}.

\begin{theorem}\label{lyq-X}
Let $\alpha\in(0,n)$,
$\vec{p}:=(p_{1},\ldots,p_{n}),\vec{q}:=(q_{1},\ldots,q_{n})
\in(0,\frac{n}{\alpha})^{n}$,
$\vec{\alpha}:=
(\alpha_{1},\ldots,
\alpha_{n})\in\rn$ satisfy
$\alpha_i\in(\frac{\alpha q_i-n}{nq_i},\infty)$ for any
$i\in\{1,\ldots,n\}$, $\vec{r}:=(\frac{np_1}{n-\alpha
p_1},\ldots,\frac{np_n}{n-\alpha p_n})$,
$\vec{s}:=(\frac{nq_1}{n-\alpha q_1},\ldots,\frac{nq_n}{n-\alpha q_n})$,
and $I_\alpha$ be the same as in \eqref{cla-I}.
Then $I_\alpha$ can be extended to a unique bounded linear
operator, still denoted by $I_\alpha$, from
$H\dot{E}^{\vec{\alpha},\vec{p}}_{\vec{q}}(\rn)$ to
$H\dot{E}^{\vec{\alpha},\vec{r}}_{\vec{s}}(\rn)$,
namely, there exists a positive constant $C$ such that, for any $f\in
H\dot{E}^{\vec{\alpha},\vec{p}}_{\vec{q}}(\rn)$,
$$\|I_\alpha(f)\|_{H\dot{E}^{\vec{\alpha},\vec{r}}_{\vec{s}}(\rn)}\leq
C\|f\|_{H\dot{E}^{\vec{\alpha},\vec{p}}_{\vec{q}}(\rn)}.
$$
\end{theorem}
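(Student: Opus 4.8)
The plan is to realize Theorem \ref{lyq-X} as a direct application of Theorem \ref{thm-R}, following closely the template of the proof of Theorem \ref{lyq-X-Y-Z} but with the local generalized Herz space replaced by the mixed Herz space and with the structural facts drawn from \cite{zyz2022} in place of \cite{LYH2022}. First I would set $X:=\dot{E}^{\vec{\alpha},\vec{p}}_{\vec{q}}(\rn)$ and $Y:=\dot{E}^{\vec{\alpha},\vec{r}}_{\vec{s}}(\rn)$, recall from \cite[Propositions 2.8 and 2.22]{zyz2022} that both are ball quasi-Banach function spaces with absolutely continuous quasi-norm, and then choose
\begin{align*}
p_0\in\lf(0,\min\lf\{p_1,\ldots,p_n,q_1,\ldots,q_n,\frac{n}{n+\alpha}\r\}\r),\qquad \frac{1}{q_0}:=\frac{1}{p_0}-\frac{\alpha}{n},
\end{align*}
so that $q_0\in(0,1]$. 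A direct computation from the definition of the $s$-convexification gives $(\dot{E}^{\vec{a},\vec{b}}_{\vec{c}})^{s}=\dot{E}^{\vec{a}/s,\,s\vec{b}}_{\,s\vec{c}}$; hence $X^{1/p_0}=\dot{E}^{p_0\vec{\alpha},\,\vec{p}/p_0}_{\,\vec{q}/p_0}(\rn)$ and $Y^{1/q_0}=\dot{E}^{q_0\vec{\alpha},\,\vec{r}/q_0}_{\,\vec{s}/q_0}(\rn)$, and the smallness of $p_0$ together with the criterion of \cite{zyz2022} for a mixed Herz space to be a ball Banach function space guarantees that both convexifications are ball Banach function spaces.

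The heart of the argument is the key identity \eqref{thm-R-02}. By the dual (associate-space) theorem for mixed Herz spaces in \cite{zyz2022}, taking the associate flips the sign of the Herz exponent and conjugates the integrability exponents, so that
\begin{align*}
\lf(Y^{1/q_0}\r)'=\dot{E}^{-q_0\vec{\alpha},\,(\vec{r}/q_0)'}_{(\vec{s}/q_0)'}(\rn),
\end{align*}
whereas, applying the convexification formula once more,
\begin{align*}
\lf(\lf(X^{1/p_0}\r)'\r)^{p_0/q_0}=\dot{E}^{-q_0\vec{\alpha},\,(p_0/q_0)(\vec{p}/p_0)'}_{(p_0/q_0)(\vec{q}/p_0)'}(\rn).
\end{align*}
Since $r_i/q_0=\frac{np_i}{[n-\alpha p_i]q_0}$ and $s_i/q_0=\frac{nq_i}{[n-\alpha q_i]q_0}$, the same elementary conjugate-exponent computation used in the proof of Theorem \ref{lyq-X-Y-Z} (which rests only on $\frac{1}{q_0}=\frac{1}{p_0}-\frac{\alpha}{n}$) yields, componentwise, $(r_i/q_0)'=(p_i/p_0)'(p_0/q_0)$ and $(s_i/q_0)'=(q_i/p_0)'(p_0/q_0)$; thus the two displays coincide and \eqref{thm-R-02} holds.

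It then remains to verify that $\mathcal{M}$ is bounded on $(Y^{1/q_0})'=\dot{E}^{-q_0\vec{\alpha},\,(\vec{r}/q_0)'}_{(\vec{s}/q_0)'}(\rn)$. Because the weights attached to each coordinate are the pure powers $\omega_i(t):=t^{\alpha_i}$, all four Matuszewska--Orlicz indices of every $\omega_i$ equal $\alpha_i$, so the boundedness criterion for $\mathcal{M}$ on mixed Herz spaces from \cite{zyz2022} reduces to explicit inequalities among $\alpha$, $\alpha_i$, $p_i$, $q_i$, and $q_0$, which I would check are exactly secured by the hypotheses $p_i,q_i\in(0,\frac{n}{\alpha})$ and $\alpha_i>\frac{\alpha q_i-n}{nq_i}=\frac{\alpha}{n}-\frac{1}{q_i}$ — the precise analogues of conditions \eqref{lyq-X-Y-09} and \eqref{lyq-X-Y-10}. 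Once all three hypotheses of Theorem \ref{thm-R} are in place, applying Theorem \ref{thm-R} with these $X$, $Y$, $p_0$, and $q_0$ gives the boundedness \eqref{thm-R-05}, and the uniqueness of the extension follows from the fact that $X$ has an absolutely continuous quasi-norm.

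The main obstacle will be this last, maximal-function step: translating the abstract boundedness criterion for $\mathcal{M}$ on the associate mixed Herz space into the stated index constraints, and in particular confirming that $\alpha_i>\frac{\alpha}{n}-\frac{1}{q_i}$ together with $q_i<\frac{n}{\alpha}$ simultaneously forces $(s_i/q_0)'>1$ and keeps the negated, convexified Herz exponents strictly between the required lower and upper bounds for every coordinate $i$. By contrast, identifying the convexifications and checking the associate-space identity \eqref{thm-R-02} are routine once the dual theorem of \cite{zyz2022} is invoked, since the underlying algebra is identical to that of Theorem \ref{lyq-X-Y-Z}.
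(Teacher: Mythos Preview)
Your proposal is correct and follows essentially the same route as the paper's proof: apply Theorem \ref{thm-R} with $X=\dot{E}^{\vec{\alpha},\vec{p}}_{\vec{q}}(\rn)$ and $Y=\dot{E}^{\vec{\alpha},\vec{r}}_{\vec{s}}(\rn)$, identify the convexifications via \cite[Lemma 2.6]{zyz2022}, check \eqref{thm-R-02} through the associate-space theorem \cite[Theorem 2.10]{zyz2022}, and verify the boundedness of $\mathcal{M}$ on $(Y^{1/q_0})'$ via the maximal criterion in \cite[Corollary 4.9]{zyz2022}.

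One small imprecision: your explicit range for $p_0$ omits the terms $(\alpha_i+\tfrac{1}{q_i})^{-1}$ that appear in the paper's $p_-$ of \eqref{p-zyr}. Since the theorem places no upper bound on the $\alpha_i$, your displayed choice of $p_0$ need not force $p_0\alpha_i<1-\tfrac{p_0}{q_i}$, which is what the ball \emph{Banach} criterion of \cite[Proposition 2.22]{zyz2022} requires for $X^{1/p_0}$. The fix is exactly what the paper does: take $p_0\in(0,\min\{\tfrac{n}{n+\alpha},p_-\})$ with $p_-$ as in \eqref{p-zyr}. With that adjustment your argument and the paper's coincide.
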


\begin{proof}
Let all the symbols be the same as in the present theorem.
Let
$$p_0\in\lf(0,\min\lf\{\frac{n}{n+\alpha},
p_-\r\}\r)
$$
with $p_-$ the same as in \eqref{p-zyr}
and $\frac{1}{q_0}:=\frac{1}{p_0}-\frac{\alpha}{n}$.
From the range of $\alpha_i$, we deduce that $p_0\alpha_i\in(-\frac{p_0}{q_i},1-\frac{p_0}{q_i})$
and
$q_0\alpha_i\in(\frac{q_0}{s_i},1+\frac{q_0}{s_i})$
for any $i\in\{1,\ldots,n\}$.
By this and \cite[Proposition 2.22]{zyz2022}, we conclude that both $\dot{E}^{p_0\vec{\alpha},\vec{p}/p_0}_{\vec{q}/p_0}(\rn)$ and $\dot{E}^{q_0\vec{\alpha},\vec{r}/q_0}_{\vec{s}/q_0}(\rn)$
are ball Banach function spaces.
Moreover, using \cite[Lemma 2.6 and Theorem 2.10]{zyz2022}, we have
$$\lf(\lf\{\lf[\dot{E}^{\vec{\alpha},\vec{p}}_{\vec{q}}(\rn)
\r]^{\frac{1}{p_0}}\r\}'\r)^{\frac{p_0}{q_0}}
=\dot{E}^{-q_0\vec{\alpha},\frac{p_0}{q_0}(\frac{\vec{p}}{p_0})'}
_{\frac{p_0}{q_0}(\frac{\vec{q}}{p_0})'}(\rn)
=\lf(\lf[\dot{E}^{\vec{\alpha},\vec{r}}_{\vec{s}}(\rn)
\r]^{\frac{1}{q_0}}\r)'.
$$
Thus, \eqref{thm-R-02} with both
$X:=\dot{E}^{\vec{\alpha},\vec{r}}_{\vec{s}}(\rn)$
and $Y:=\dot{E}^{\vec{\alpha},\vec{p}}_{\vec{q}}(\rn)$
holds true.
In addition, from the range of $p_0$ and $\alpha_i$,
we infer that $\frac{p_0}{q_0}(\frac{p_i}{p_0})'\in(1,\infty)$,
$\frac{p_0}{q_0}(\frac{q_i}{p_0})'\in(1,\infty)$, and $-q_0\vec{\alpha}\in(
\frac{q_0(p_0-q_i)}{p_0q_i},
1+\frac{q_0(p_0-q_i)}{p_0q_i})$ for any $i\in\{1,\ldots,n\}$. Combining this and
\cite[Corollary 4.9]{zyz2022}, we conclude that Assumption \ref{assump1} with $X:=(\dot{E}^{\vec{\alpha},\vec{r}}_{\vec{s}}(\rn)
^{\frac{1}{q_0}})'$ holds true.
Moreover, by the range of $p_0$, $\vec{\alpha}$, $\vec{p}$, and $\vec{q}$,
we obtain that $-q_0\alpha_i+[\frac{p_0}{q_0}(\frac{q_i}{p_0})']^{-1}\in(0,1)$.
All together, by Remark \ref{max-re}, we find that
$\mathcal{M}$ is bounded on $(\dot{E}^{\vec{\alpha},\vec{r}}_{\vec{s}}(\rn)
^{\frac{1}{q_0}})'$.
Thus, all the assumptions of Theorem \ref{thm-R} with both
$X:=\dot{E}^{\vec{\alpha},\vec{r}}_{\vec{s}}(\rn)$
and $Y:=\dot{E}^{\vec{\alpha},\vec{p}}_{\vec{q}}(\rn)$
are satisfied.
Then, using Theorem \ref{thm-R} with both
$X:=\dot{E}^{\vec{\alpha},\vec{r}}_{\vec{s}}(\rn)$
and $Y:=\dot{E}^{\vec{\alpha},\vec{p}}_{\vec{q}}(\rn)$, we obtain the desired conclusion, which completes
the proof of Theorem \ref{lyq-X}.
\end{proof}

\begin{remark}
To the best of our knowledge, Theorems \ref{Th-M-H}, \ref{co-M-H},
and \ref{lyq-X} are totally new.
\end{remark}

\bigskip

\noindent Yiqun Chen, Hongchao Jia and Dachun Yang (Corresponding
author)

\smallskip

\noindent  Laboratory of Mathematics and Complex Systems
(Ministry of Education of China),
School of Mathematical Sciences, Beijing Normal University,
Beijing 100875, The People's Republic of China

\smallskip

\noindent {\it E-mails}: \texttt{yiqunchen@mail.bnu.edu.cn} (Y. Chen)

\noindent\phantom{{\it E-mails:}} \texttt{hcjia@mail.bnu.edu.cn} (H. Jia)

\noindent\phantom{{\it E-mails:}} \texttt{dcyang@bnu.edu.cn} (D. Yang)

\end{document}